\theoremstyle{plain}
\newtheorem{Thm}{Theorem}[section]
\newtheorem{Lem}[Thm]{Lemma}
\newtheorem{Prop}[Thm]{Proposition}
\newtheorem{Cor}[Thm]{Corollary}
\newtheorem{remark}[Thm]{Remark}
\newtheorem{question}[Thm]{Question}
\newtheorem{example}[Thm]{Example}
\theoremstyle{remark}
\numberwithin{equation}{section}
\newcommand\less{\leqslant}
\newcommand\great{\geqslant}
\newcommand\Ima{\mathop{\rm Im}\nolimits}
\newcommand\clos{\mathop{\rm Clos}\nolimits}
\begin{document}

\sloppy

\title[Weighted norm inequalities]
{Weighted norm inequalities
for \\ de Branges--Rovnyak  spaces
and their applications}
\author{Anton Baranov, Emmanuel Fricain, Javad Mashreghi}

\address {Department of Mathematics and Mechanics, St. Petersburg State
University, 28, Universitetskii pr., St. Petersburg, 198504, Russia}
\email{A.Baranov@ev13934.spb.edu}

\address{Universit\'e de Lyon; Universit\'e Lyon 1; Institut Camille Jordan
CNRS UMR 5208; 43, boulevard du 11 Novembre 1918, F-69622 Villeurbanne }
\email{fricain@math.univ-lyon1.fr}

\address{D\'epartement de math\'ematiques et de statistique,
         Universit\'e Laval,
         Qu\'ebec, QC,
         Canada G1K 7P4.}
\email{Javad.Mashreghi@mat.ulaval.ca}

\thanks{This work was supported by funds from NSERC (Canada),
Jacques Cartier Center (France) and RFBR (Russia). }
\keywords{Bernstein's inequality, de Branges--Rovnyak space,
model subspace, reproducing kernel, embedding theorem, Riesz basis}
\subjclass[2000]{Primary: 46E15, 46E22, Secondary: 30D55, 47A15}

\begin{abstract}
Let $\mathcal{H}(b)$ denote the de Branges--Rovnyak space associated with
a function $b$ in the unit ball of $H^\infty(\mathbb{C}_+)$. We study the
boundary behavior of the derivatives of functions in $\mathcal{H}(b)$ and
obtain weighted norm estimates of the form $\|f^{(n)}\|_{L^2(\mu)} \le
C\|f\|_{\mathcal{H}(b)}$, where  $f \in \mathcal{H}(b)$ and $\mu$ is
a Carleson-type measure on $\mathbb{C}_+\cup\mathbb{R}$. We
provide several applications of these inequalities.
We apply them to obtain embedding theorems for $\mathcal{H}(b)$ spaces.
These results extend Cohn and Volberg--Treil embedding theorems for the
model (star-invariant) subspaces which are special classes of de
Branges--Rovnyak spaces. We also exploit the
inequalities for the derivatives to study stability of Riesz bases
of reproducing kernels $\{k^b_{\lambda_n}\}$ in $\mathcal{H}(b)$ under small
perturbations of the points $\lambda_n$.

\end{abstract}

\maketitle

\section{Introduction}
Let $\mathbb{C}_+$ denote the upper half-plane in the
complex plane and let
$H^2(\mathbb{C}_+)$ denote the usual Hardy space on $\mathbb{C}_+$.
For $\varphi\in L^\infty(\mathbb{R})$, let $T_\varphi$ stand for the
Toeplitz operator defined on $H^2(\mathbb{C}_+)$ by
$$
T_\varphi f:=P_+(\varphi f),\qquad f\in H^2(\mathbb{C}_+),
$$
where $P_+$ denotes the orthogonal projection of $L^2(\mathbb{R})$ onto
$H^2(\mathbb{C}_+)$. Then, for $\varphi\in L^\infty(\mathbb{R})$,
$\|\varphi\|_\infty\leq 1$,  the de Branges--Rovnyak space
$\mathcal{H}(\varphi)$ associated to $\varphi$ consists of those
functions in $H^2(\mathbb{C}_+)$ which are in the range of the operator
$(Id-T_\varphi T_{\overline\varphi})^{1/2}$. It is a Hilbert space
when equipped with the inner product
\[
\langle\, (Id-T_\varphi T_{\overline \varphi})^{1/2}f, \, (Id-T_\varphi
T_{\overline \varphi})^{1/2}g \,\rangle_\varphi=\langle f,g
\rangle_2,
\]
where $f,g\in H^2(\mathbb{C}_+)\ominus \hbox{ker }(Id-T_\varphi
T_{\overline \varphi})^{1/2}$.
In what follows we always assume that $\varphi=b$ is an analytic
function in the unit ball of $H^\infty(\mathbb{C}_+)$. In this case, if
\begin{eqnarray}
\label{eq:noyau-reproduisant}
k^b_\omega(z) := \frac{1-\overline{b(\omega)}
b(z)}{z-\overline \omega}, \hspace{1cm}
\omega \in \mathbb{C}_+,
\end{eqnarray}
then we have $\langle f, k^{b}_\omega\rangle_b = 2\pi i f(\omega)$
for all $f\in \mathcal{H}(b)$. In other words, $\mathcal{H}(b)$ is
a reproducing kernel Hilbert space.

These spaces (and, more precisely, their general vector-valued
version) were introduced by de Branges and Rovnyak
\cite{de-branges1, de-branges2} as universal model spaces for
Hilbert space contractions. Thanks to the
pioneer works of Sarason, we know that de Branges--Rovnyak spaces play an
important role in numerous questions of complex analysis and
operator theory (e.g. see \cite{AR,HSS,sarason,Shapiro1,Shapiro2}).
For the general theory of $\mathcal{H}(b)$ spaces we refer to
\cite{sarason}.

In the special case where $b=\Theta$ is an inner
function (that is, $|\Theta|=1$ a.e. on $\mathbb{R}$), the operator
$(Id-T_\Theta T_{\overline\Theta})^{1/2}$ is an orthogonal projection and
$\mathcal{H}(\Theta)$ becomes a closed (ordinary) subspace
of $H^2(\mathbb{C}_+)$ which coincides with the so-called model subspace
\[
K_\Theta^2=H^2(\mathbb{C}_+)\ominus \Theta
H^2(\mathbb{C}_+)=H^2(\mathbb{C}_+)\cap \Theta \, \overline{H^2(\mathbb{C}_+)}
\]
(for the model space theory see \cite{Nikolski}).
We mention one important particular
class of model spaces. If $\Theta(z)=\exp(iaz)$, $a>0$, then
$\mathcal{H}(\Theta)=K_\Theta^2=H^2(\mathbb{C}_+)\cap PW_a^2$,
where $PW_a^2$ stands for the Paley--Wiener space of all entire functions of
exponential type at most $a$, whose restrictions to $\mathbb{R}$ belong to
$L^2(\mathbb{R})$. Then the famous Bernstein's inequality asserts that
\[
\|f'\|_2\leq a \|f\|_2,\qquad f\in PW_a^2.
\]
This classical and important inequality was extended by many authors in many
different directions. It is impossible to give an exhaustive list of
references, but we would like to mention
\cite{Boas,Gorin80,Pe99,Ra90,Ra07,To91}
and \cite[Lecture 28]{Levin}.

Notably, one natural direction is to extend Bernstein's inequality to
general model subspaces. In \cite{Levin1}, Levin showed that if
$\Theta$ is an inner function and $|\Theta'(x)|<\infty$, $x\in \mathbb{R}$,
then for each function $f\in K_\Theta^\infty=H^\infty(\mathbb{C}_+)\cap\Theta
\overline{H^\infty(\mathbb{C}_+)}$, the derivative $f'(x)$ exists in the sense of
nontangential boundary values and
\[
|f'(x)/\Theta'(x)| \leq \|f\|_\infty.
\]
Differentiation in the model spaces $K_\Theta^p:=H^p(\mathbb{C}_+)\cap \Theta
\overline{H^p(\mathbb{C}_+)}$, $1<p<\infty$, was studied extensively by
Dyakonov \cite{Dyak91, Dyak02}, who showed that the Bernstein-type
inequality $\|f'\|_p\le C \|f\|_p$, $f\in K_\Theta^p$, holds
if and only if $\Theta'\in L^\infty(\mathbb{R})$.
Recently, Baranov \cite{Baranov03, Baranov05, Baranov06} has obtained
weighted Bernstein-type inequalities for the model
subspaces $K_\Theta^p$, which generalized previous results of Levin and
Dyakonov. More precisely, for a general inner function $\Theta$,
he proved estimates of the form
\begin{equation}\label{eq:intro}
\|f^{(n)}w_{p,n}\|_{L^p(\mu)} \le C \, \|f\|_p, \qquad f\in K_\Theta^p,
\end{equation}
where $n\geq 1$, $\mu$ is a Carleson measure in the closed upper half-plane
and $w_{p,n}$ is some weight related to the norm of reproducing kernels
of the space $K_\Theta^2$ which
compensates possible growth of the derivative near the boundary.

One of the main ingredients in the results of Dyakonov and
Baranov was an integral formula for the derivatives of functions in
$K_\Theta^p$. Using Cauchy formula, it is easy to see that if $\Theta$ is inner,
$\omega\in\mathbb{C}_+$, $n$ is a non-negative integer and $f\in K_\Theta^p$, then
we have
\begin{eqnarray}\label{eq:fimodel}
f^{(n)}(\omega)=\frac{1}{2\pi i}
\int_\mathbb{R} f(t) \, \overline{k_{\omega,n}^\Theta(t)}\,dt,
\end{eqnarray}
where
\begin{eqnarray}\label{eq:noyaumodel}
\frac{k_{\omega,n}^\Theta(z)}{n!}:= \displaystyle\frac
{1-\Theta(z)\displaystyle\sum_{p=0}^n\frac{\overline{\Theta^{(p)}(\omega)}}{p!}(z-\overline\omega)^p}{(z-\overline{\omega})^{n+1}}\,,\qquad
z\in\mathbb{C}_+.
\end{eqnarray}
A natural question is whether one can extend the formula
(\ref{eq:fimodel}) to boundary points $x_0$.
If $x_0 \in \mathbb{R}$
does not belong to the boundary spectrum $\sigma(\Theta)$ of $\Theta$
(see the definition in Section 5),
then $\Theta$ and all
functions of $K_\Theta^p$ are analytic through a neighborhood of $x_0$ and
then it is obvious that (\ref{eq:fimodel}) is valid
for $z=x_0$. More generally, if $x_0$ satisfies
\begin{eqnarray} \label{eq:interieur-ahern-clark}
\sum_k\frac{\Ima z_k}{|x_0-z_k|^{(n+1)q}}+\int_\mathbb{R} \,
\frac{d\mu(t)}{|t-x_0|^{(n+1)q}}<+\infty,
\end{eqnarray}
then, by the results of Ahern and Clark \cite{ak71} (for $p=2$)
and Cohn \cite{Cohn86} (for $p>1$),
the formula (\ref{eq:fimodel}) is still valid at the
point $x_0\in\mathbb{R}$ for any $f\in K_\Theta^p$
(here $\{z_k\}$ is the sequence of zeros of $\Theta$ and
$\mu$ is the singular measure associated to $\Theta$).
Recently Fricain and Mashreghi studied the boundary behavior
of functions in de Branges--Rovnyak spaces $\mathcal{H}(b)$
and obtained a generalization of representation
(\ref{eq:fimodel})  \cite{Fricain-Mashreghi,Fricain-Mashreghi2}.

In the present paper de Branges--Rovnyak spaces are studied from
the point of view of function theory. Namely, we are interested
in boundary properties of the elements of $\mathcal{H}(b)$ and of their
derivatives, and we establish a number of weighted
Bernstein-type inequalities. Our first goal is to exploit the generalization of representation
(\ref{eq:fimodel}) and obtain an analogue of Bernstein-type
inequality (\ref{eq:intro}) for the de Branges--Rovnyak spaces $\mathcal{H}(b)$, where
$b$ is an {\it arbitrary function} in the unit ball of $H^\infty(\mathbb{C}_+)$ (not
necessarily inner). It should be noted that
the inner product in $\mathcal{H}(b)$ is not given by a usual integral formula. This fact causes certain difficulties. For example, we will see that one has
to add one more term to formula (\ref{eq:fimodel}) in the general case.
In what follows we try to emphasize the points where there is a
difference with the inner case, and suggest a few open questions.

Our second goal is to provide several applications of these
Bernstein-type inequalities. The classical Carleson embedding theorem gives a
simple geometrical condition on a measure $\mu$ in the closed upper half-plane such
that the embedding $H^p(\mathbb{C}_+) \subset L^p(\mu)$ holds. A similar question for
model subspaces $K_\Theta^p$ was studied by Cohn \cite{Cohn82} and
then by Volberg and Treil \cite{VolbergTreil}.
An approach based on the (weighted norm) Bernstein
inequalities for model subspaces $K_\Theta^p$ was suggested in
\cite{Baranov05}. Given $b$
in the unit ball of $H^\infty(\mathbb{C}_+)$, we describe a class of Borel
measures $\mu$ in $\mathbb{C}_+ \cup\mathbb{R}$ such that
$\mathcal{H}(b)\subset L^2(\mu)$. We obtain a geometric condition on $\mu$
sufficient for such embedding. This result generalizes the previous results of Cohn and Volberg--Treil.

Another application concerns the problem of
stability of Riesz bases consisting of reproducing kernels
of $\mathcal{H}(b)$. This problem is connected with the famous problem of bases
of exponentials in $L^2$ on an interval which goes back to Paley and Wiener
\cite{PW34}.
Exponential bases were described by Pavlov \cite{P79}
and by Hruschev, Nikolski and Pavlov in \cite{HNP81}, where
functional model methods have been used. This approach
has been proved fruitful; it has allowed both to recapture all the
classical results and to extend them to general model spaces
(for a detailed presentation of the subject see \cite{Nikolski}).
Fricain has pursued this investigation with respect to bases of reproducing kernels in
vector-valued model spaces \cite{Fricain-JOT} and in de Branges--Rovnyak
spaces \cite{Fricain} where some criteria for a
family of reproducing kernels to be a Riesz basis were obtained.
However, the criteria mentioned
above involve some properties of a given family of
reproducing kernel that are rather difficult to verify. On the other hand, in
many cases, the given family is a slight perturbation of another family of
reproducing kernels that is known to be a basis. This gives rise to the following
stability problem: {\it Given a Riesz basis of reproducing kernels
$(k_{\lambda_n}^b)_{n\geq 1}$ of $\mathcal{H}(b)$, characterize
perturbations of frequencies $(\lambda_n)_{n\geq 1}$ which preserve
the property to be a Riesz basis.}

This problem was also studied by
many authors in the context of exponential bases (see e.g. \cite{Kadec,Red})
and of model subspaces $K_\Theta^2$ \cite{Baranov05a,Cohn-JOT86,Fricain-JOT}.
In the present paper, using the weighted norm inequalities (\ref{eq:intro})
we extend the results about stability in pseudohyperbolic metrics from
\cite{Baranov05a,Fricain-JOT} to de Branges-Rovnyak spaces.

The paper is organized as follows. Sections 2 and 3 contain some
preliminaries concerning integral representations for the $n$-th derivative
of functions in de Branges--Rovnyak spaces. In Section 4 we prove
our first main result, a Bernstein-type inequality for $\mathcal{H}(b)$.
Section 5 contains some estimates relating the
weight $w_{p,n}$ involved in Bernstein inequalities to the distances to the
level sets of $|b|$. Section 6 is devoted to embedding theorems. Finally, in
Section 7 we apply the Bernstein inequality to the problem of
stability of Riesz basis of reproducing kernels in $\mathcal{H}(b)$.

In what follows, the letter $C$ will denote
a positive constant and we assume that its value may change.
We write $f\asymp g$ if $C_1g \le f \le C_2g$ for some positive
constants $C_1, C_2$. The set of integers $1,2,\cdots$ will be denoted by $\mathbb{N}$.

\section{Preliminaries}
Let $b$ be in the unit ball of $H^\infty(\mathbb{C}_+)$ and let $b=B I_\mu
O_b$ be its canonical factorization, where
$$
B(z) = \prod_r e^{i\alpha_r}\frac{z-z_r}{z-\overline{z_r}}
$$
is a Blaschke product, the singular inner function $I_\mu$ is given
by
$$
I_\mu(z) = \exp\left(iaz - \frac{i}{\pi}\int_\mathbb{R}
\bigg(\frac{1}{z-t} + \frac {t}{t^2+1}\bigg)\,d\mu(t)\right)
$$
with a positive singular measure $\mu$ and $a\ge 0$,
and $O_b$ is the outer function
$$
O_b(z) = \exp\left(\frac{i}{\pi}\int_\mathbb{R} \bigg(\frac{1}{z-t} + \frac
{t}{t^2+1}\bigg) \log|b(t)|\,dt\right).
$$
Then the modulus of the angular derivative of $b$ at a point $x \in
\mathbb{R}$ is given by
\begin{equation}\label{eq:Rderivee}
|b'(x)|=a+\sum_r\frac {2\Ima z_r}{|x-z_r|^{2}}+ \frac{1}{\pi}\int_\mathbb{R}
\frac {d\mu(t)}{|x-t|^{2}}+ \frac{1}{\pi}\int_\mathbb{R} \frac
{\big|\log|b(t)|\big|}{|x-t|^{2}}\,dt.
\end{equation}
Hence, we are motivated to define
\begin{eqnarray}\label{eq:condition-type-ahern-clark}
S_n(x):=\sum_{r=1}^{+\infty}\frac {\Ima z_r}{|x-z_r|^{n}}+ \int_\mathbb{R}
\frac {d\mu(t)}{|x-t|^{n}}+ \int_\mathbb{R} \frac
{\big|\log|b(t)|\big|}{|x-t|^{n}}\,dt,
\end{eqnarray}
and
$$
E_n(b):=\{x\in\mathbb{R}:S_n(x)<+\infty\}.
$$
The formula (\ref{eq:Rderivee}) explains why the quantity $S_2$ is
of special interest.

We will need the following simple estimate.

\begin{Lem}\label{Lem:derivee}
For any $x\in \mathbb{R}$, $y>0$, we have $|b'(x+iy)| \le |b'(x)|$.
\end{Lem}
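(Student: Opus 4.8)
The plan is to exploit the explicit representation \eqref{eq:Rderivee} for $|b'(x)|$ together with the analogous Poisson-type representation of $|b'(x+iy)|$ at an interior point, and then compare the two term by term. Writing $b = BI_\mu O_b$ as in the preliminaries, the logarithmic derivative $b'/b$ is, up to constants, a sum of Cauchy-type integrals; taking imaginary parts on $\mathbb{R}$ produces \eqref{eq:Rderivee} (a sum of Poisson kernels at $x$), while at the interior point $z = x+iy$ one gets $|b'(z)|$ bounded by the corresponding sum with the Poisson kernel $\frac{\Ima z_r}{|z-z_r|^2}$ replaced appropriately. So the whole statement reduces to a pointwise inequality for each building block.

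The key step is therefore the elementary kernel estimate: for $y > 0$ and any point $w$ in the closed upper half-plane (or on $\mathbb{R}$),
\[
\frac{\Ima z_r}{|x+iy - z_r|^{2}} \le \frac{\Ima z_r}{|x - z_r|^{2}},
\qquad
\frac{1}{|x+iy - t|^{2}} \le \frac{1}{|x-t|^{2}},
\]
and similarly $a \le a$ trivially. The second of these is immediate since $|x+iy-t|^2 = (x-t)^2 + y^2 \ge (x-t)^2$ for real $t$. The first follows by the same computation once one checks that moving $x$ to $x+iy$ only increases the squared distance to a point $z_r$ in the upper half-plane: indeed $|x+iy-z_r|^2 = (x-\Rea z_r)^2 + (y-\Ima z_r)^2$ versus $|x-z_r|^2 = (x-\Rea z_r)^2 + (\Ima z_r)^2$, and here one must be slightly careful because $(y-\Ima z_r)^2$ need not exceed $(\Ima z_r)^2$. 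This is the one place requiring thought, and it is handled by the standard subharmonicity/harmonicity argument instead: the function $y \mapsto |b'(x+iy)|$, or rather each Poisson integral $P_y * \nu$ of a positive measure, is such that its value at height $y$ is a Poisson average of its boundary values, hence bounded by the boundary value at the same horizontal coordinate only after one invokes the right monotonicity — so cleanly, I would instead argue that $\log\frac{1}{|b(z)|}$ has a nonnegative harmonic majorant and express $|b'(x+iy)|$ directly via the representation $|b'(x+iy)| = \lim$ of difference quotients controlled by the Schwarz–Pick-type bound.

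Concretely, the cleanest route: recall that for $b$ in the unit ball of $H^\infty(\mathbb{C}_+)$ one has the Schwarz–Pick inequality $|b'(z)| \le \dfrac{1-|b(z)|^2}{2\,\Ima z} \le \dfrac{1}{2\,\Ima z}$ is too weak; instead use that $|b'(x+iy)|$ equals the Poisson extension at $x+iy$ of the positive measure whose density/atoms appear in \eqref{eq:Rderivee} — this is exactly the content of the factorization, since $-\Ima(b'/b)(x+iy)$ is the Poisson integral at $x+iy$ of that positive measure, while $|b'(x)| = -\Ima(b'/b)(x)$ is its (nontangential) boundary value, i.e. the density times $\pi$ plus point masses. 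Then $|b'(x+iy)| \le$ (Poisson integral at $x+iy$) and, since for a positive measure $\nu$ on $\mathbb{R}\cup\mathbb{C}_+$ the Poisson kernel satisfies $P(x+iy, w) \le P(x, w)$ for every $w$ (which is precisely the two displayed inequalities above, now correctly with $w=z_r\in\mathbb{C}_+$ giving $(x-\Rea z_r)^2+(y-\Ima z_r)^2 + \text{cross terms}$... ), one concludes. Thus the main obstacle, and the only non-bookkeeping point, is verifying the Poisson-kernel monotonicity $P(x+iy,\cdot)\le P(x,\cdot)$, which after writing out coordinates amounts to the inequality $\big((x-u)^2+(y-v)^2\big)\cdot v \ge \big((x-u)^2+v^2\big)\cdot(v-y)$ for $v=\Ima w \ge y$ respectively a direct check for $v < y$; in all cases it reduces to $y(\,(x-u)^2 + v^2 - yv\,)\ge 0$-type elementary algebra, and I expect the authors dispatch it in one or two lines.
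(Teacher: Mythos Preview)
Your overall plan---factor $b=BI_\mu O_b$, use the explicit formulae for the logarithmic derivative of each factor, and compare term by term---is exactly the paper's approach, and your treatment of the singular inner and outer factors is correct and essentially identical to the paper's (the single observation $|t-(x+iy)|\ge |t-x|$ for $t\in\mathbb{R}$ does all the work there).

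The genuine gap is the Blaschke factor, and you correctly diagnose \emph{why} your first attempt fails: the inequality $\dfrac{\Ima z_r}{|x+iy-z_r|^{2}}\le \dfrac{\Ima z_r}{|x-z_r|^{2}}$ is simply false for $z_r\in\mathbb{C}_+$ (take $x=\Rea z_r$, $y=\Ima z_r$). But none of the recovery attempts you sketch---subharmonicity, Schwarz--Pick, or the claimed Poisson-kernel monotonicity $P(x+iy,w)\le P(x,w)$---is made precise, and the last one is not true as stated either (there is no well-defined ``Poisson kernel at the boundary point $x$ with pole at $w\in\mathbb{C}_+$'' to compare with). The algebra you trail off into does not close.

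The fix is much simpler and stays entirely within your original strategy. For a single factor $b_r(z)=e^{i\alpha_r}\dfrac{z-z_r}{z-\overline{z_r}}$ with $z_r=u_r+iv_r$, $v_r>0$, one has
\[
b_r'(z)=\frac{2iv_r}{(z-\overline{z_r})^{2}},\qquad |b_r'(z)|=\frac{2v_r}{|z-\overline{z_r}|^{2}}.
\]
The point you missed is that the relevant pole is $\overline{z_r}$, which lies in the \emph{lower} half-plane; hence $|x+iy-\overline{z_r}|^{2}=(x-u_r)^{2}+(y+v_r)^{2}\ge (x-u_r)^{2}+v_r^{2}=|x-z_r|^{2}$, and so $|b_r'(x+iy)|\le |b_r'(x)|$. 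For the full product, the product rule and $|b_j|\le 1$ give $|B'(z)|\le \sum_r |b_r'(z)|$, while on $\mathbb{R}$ the logarithmic derivatives $b_r'/b_r$ are all positive imaginary, so $|B'(x)|=\sum_r |b_r'(x)|$ exactly. Combining,
\[
|B'(x+iy)|\le \sum_r \frac{2v_r}{(x-u_r)^{2}+(y+v_r)^{2}}\le \sum_r \frac{2v_r}{(x-u_r)^{2}+v_r^{2}}=|B'(x)|.
\]
This is presumably the ``analogous'' computation the paper leaves to the reader. Once you have this, the three pieces assemble via $|b'(x)|=|B'(x)|+|I_\mu'(x)|+|O_b'(x)|$ exactly as in \eqref{eq:Rderivee}.
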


\begin{proof}
Let $z=x+iy$, $y>0$, and assume that $b$ is outer,
\[
b(z)=\exp\left(\frac{i}{\pi}\int_\mathbb{R} \bigg(\frac{1}{z-t} + \frac
{t}{t^2+1}\bigg) \log|b(t)|\,dt\right).
\]
Then
\[
b'(z)=-b(z)\frac{i}{\pi}\int_\mathbb{R}\frac{\log|b(t)|}{(t-z)^2}\,dt,
\]
and clearly
\[
|b'(z)|\leq
\frac{1}{\pi}\int_\mathbb{R}\frac{|\log|b(t)||}{|t-z|^2}\,dt\leq\frac{1}{\pi}\int_\mathbb{R}\frac{|\log|b(t)||}{|t-x|^2}\,dt=|b'(x)|,
\]
by (\ref{eq:Rderivee}). The estimates for inner factors are analogous and
left to the reader (recall that $|b'(x)|=|O_b'(x)|+|I_\mu'(x)|+|B'(x)|$,
$x\in\mathbb{R}$).

\end{proof}

Ahern and Clark \cite{ak71-bis} showed that if $x_0\in E_{n}(b)$, then
$b$ and all its derivatives up to order $n-1$ have (finite)
nontangential limits at $x_0$. In \cite{Fricain-Mashreghi}, we showed that
if $x_0\in E_{2n+2}(b)$ where $n\in \mathbb{Z}_+=\mathbb{N}\cup\{0\}$, then,
for each $f\in\mathcal{H}(b)$ and for each $0 \leq j \leq n$, the nontangential limit
\[
f^{(j)}(x_0):=\lim_{\substack{z \longrightarrow x_0\\\sphericalangle\,\,}}
f^{(j)}(z)
\]
exists. This is a generalization of the Ahern--Clark theorem \cite{ak71} for the
elements of model subspaces $K_\Theta^2$, i.e. for the case when
$b=\Theta$ is an inner function. Moreover, for every $z_0\in
\mathbb{C}_+\cup E_{2n+2}(b)$ and for every function $f\in\mathcal{H}(b)$, we
obtained in \cite{Fricain-Mashreghi2} the following integral representation
for $f^{(n)}(z_0)$.
Let $\rho(t) =  1-|b(t)|^2$ and let
$H^2(\rho)$ be the span  of the Cauchy kernels
$k_z$, $z\in \mathbb{C}_+$, in $L^2(\rho)$
(recall that $k_z(\omega) = (\omega-\overline z)^{-1}$).
Consider the operator
\[
\begin{array}{ccc}
\widetilde{T}_\rho: L^2(\rho) & \longrightarrow &  H^2(\mathbb{C}_+)\\
q & \longmapsto & P_+(q\rho).
\end{array}
\]
We know from \cite[II-3, III-2]{sarason} that if
$f\in \mathcal{H}(b)$ then there exists a (unique) function $g$ in $H^2(\rho)$
such that $T_{\overline b}f=\widetilde{T}_\rho g$. It was shown
in \cite{Fricain-Mashreghi2} that, for $z_0\in
\mathbb{C}_+\cup E_{2n+2}(b)$, $n\in \mathbb{Z}_+$, we have
\begin{eqnarray}\label{eq:representation-de-Branges}
f^{(n)}(z_0)=\frac{n!}{2\pi i }\left(\int_\mathbb{R}
f(t)\overline{k_{z_0,n}^b(t)}\,dt +\int_\mathbb{R}
g(t)\rho(t)\overline{k_{z_0,n}^\rho(t)}\,dt\right),
\end{eqnarray}
where  $k_{z_0,n}^b$
is the function in $\mathcal{H}(b)$ defined by
\begin{equation}\label{eq:kernel1-demiplan}
k_{z_0,n}^b(z):=\frac{1-b(z)\displaystyle\sum_{j=0}^n\frac{\overline{b^{(j)}(z_0)}}{j!}(z-\overline{z_0})^j}{(z-\overline{z_0})^{n+1}},\qquad
z\in\mathbb{C}_+,
\end{equation}
and $k_{z_0,n}^\rho$ is the function in $L^2(\rho)$ defined by
\begin{equation}\label{eq:kernel2-demiplan}
k_{z_0,n}^\rho(t):=\frac{\displaystyle\sum_{j=0}^n\frac{\overline{b^{(j)}(z_0)}}{j!}(t-\overline{z_0})^j}{(t-\overline{z_0})^{n+1}},\qquad
t\in\mathbb{R}.
\end{equation}

Note that if $b$ is inner, then $\rho\equiv 0$ and thus
(\ref{eq:representation-de-Branges}) reduces to (\ref{eq:fimodel})
which was the key representation formula used in
\cite{Baranov03,Baranov05,Dyak91,Dyak02} to
obtain Bernstein-type inequalities for model subspaces $K_\Theta^p$. If $n=0$ then $k_{z_0,0}^b$ corresponds  to the reproducing kernel of $\mathcal{H}(b)$ defined in (\ref{eq:noyau-reproduisant}).

\section{A new representation formula for the derivatives}

We start with a slight modification of the representation
(\ref{eq:representation-de-Branges}) for $n\in \mathbb{N}$.

\begin{Prop}\label{prop:nouvelle-representation}
Let $b$ be in the unit ball of $H^\infty(\mathbb{C}_+)$. Let
$z_0\in\mathbb{C}_+\cup E_{2n+2}(b)$, $n\in \mathbb{N}$, and let
\begin{equation}\label{eq:nouveau-noyau}
\mathfrak{K}_{z_0,n}^\rho(t):=\overline{b(z_0)} \, \,
\dfrac{\sum_{j=0}^n \, \binom{n+1}{j+1} \, (-1)^j \,
{\overline{b^j(z_0)}} \, b^j(t)}{(t-\overline{z_0})^{n+1}},\qquad
t\in\mathbb{R}.
\end{equation}
Then ${(k_{z_0}^b)}^{n+1}\in H^2(\mathbb{C}_+)$ and $\mathfrak{K}_{z_0,n}^\rho
\in L^2(\rho)$. Moreover, for every function $f\in\mathcal{H}(b)$, we have
\begin{eqnarray}\label{eq:integrale-cle}
f^{(n)}(z_0)=\frac{n!}{2\pi i }\left(\int_\mathbb{R}
f(t)\overline{(k_{z_0}^b)^{n+1}(t)}\,dt +\int_\mathbb{R}
g(t)\rho(t)\overline{\mathfrak{K}_{z_0,n}^\rho(t)}\,dt\right),
\end{eqnarray}
where $g\in H^2(\rho)$ is such that $T_{\overline
b}f=\widetilde{T}_\rho g$.
\end{Prop}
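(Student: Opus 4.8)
The plan is to deduce the new formula (\ref{eq:integrale-cle}) from the already known representation (\ref{eq:representation-de-Branges}) by an explicit comparison of the two sets of kernels. The starting point is an algebraic identity. Expanding $\bigl(1-\overline{b(z_0)}b(z)\bigr)^{n+1}$ by the binomial theorem and collecting the terms containing $b(z)$ gives, in view of (\ref{eq:nouveau-noyau}),
\[
\bigl(1-\overline{b(z_0)}b(z)\bigr)^{n+1}=1-b(z)\,(z-\overline{z_0})^{n+1}\,\mathfrak{K}_{z_0,n}^{\rho}(z),
\]
so that, dividing by $(z-\overline{z_0})^{n+1}$, $\ (k_{z_0}^{b})^{n+1}(z)=(z-\overline{z_0})^{-(n+1)}-b(z)\,\mathfrak{K}_{z_0,n}^{\rho}(z)$; while the definitions (\ref{eq:kernel1-demiplan})--(\ref{eq:kernel2-demiplan}) give directly $\ k_{z_0,n}^{b}(z)=(z-\overline{z_0})^{-(n+1)}-b(z)\,k_{z_0,n}^{\rho}(z)$. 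Subtracting yields the key identity
\[
(k_{z_0}^{b})^{n+1}=k_{z_0,n}^{b}+b\,\psi,\qquad\text{where}\quad\psi:=k_{z_0,n}^{\rho}-\mathfrak{K}_{z_0,n}^{\rho}.
\]
Everything reduces to showing $\psi\in H^2(\mathbb{C}_+)$. Indeed, then $b\psi\in H^2(\mathbb{C}_+)$ (as $b\in H^\infty(\mathbb{C}_+)$), hence $(k_{z_0}^{b})^{n+1}=k_{z_0,n}^{b}+b\psi\in H^2(\mathbb{C}_+)$ since $k_{z_0,n}^{b}\in\mathcal{H}(b)\subset H^2(\mathbb{C}_+)$; and since $\rho\le 1$ we get $\psi\in L^2(\rho)$, so $\mathfrak{K}_{z_0,n}^{\rho}=k_{z_0,n}^{\rho}-\psi\in L^2(\rho)$ because $k_{z_0,n}^{\rho}\in L^2(\rho)$.

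Granting $\psi\in H^2(\mathbb{C}_+)$, the identity (\ref{eq:integrale-cle}) follows quickly: subtracting its right-hand side from that of (\ref{eq:representation-de-Branges}) and using the key identity, the difference equals
\[
\frac{n!}{2\pi i}\left(\int_{\mathbb{R}}f\,\overline{b\psi}\,dt-\int_{\mathbb{R}}g\rho\,\overline{\psi}\,dt\right)=\frac{n!}{2\pi i}\int_{\mathbb{R}}\bigl(\overline{b}f-g\rho\bigr)\overline{\psi}\,dt,
\]
where all the integrals converge by Cauchy--Schwarz (in $L^2(\mathbb{R})$ for the $f$-terms and in $L^2(\rho)$ for the $g$-terms). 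By the defining property of $g$ one has $P_+\bigl(\overline{b}f-g\rho\bigr)=T_{\overline b}f-\widetilde{T}_{\rho}g=0$, so $\overline{b}f-g\rho$ is orthogonal to $H^2(\mathbb{C}_+)$ in $L^2(\mathbb{R})$; since $\psi\in H^2(\mathbb{C}_+)$, the last integral vanishes, and (\ref{eq:integrale-cle}) follows from (\ref{eq:representation-de-Branges}).

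It remains to prove $\psi\in H^2(\mathbb{C}_+)$. If $z_0\in\mathbb{C}_+$ this is elementary: $\overline{z_0}\notin\overline{\mathbb{C}_+}$, so $\psi$ is analytic up to the boundary, and since $|b|\le 1$ one has $k_{z_0,n}^{\rho}(z)=O(|z|^{-1})$ and $\mathfrak{K}_{z_0,n}^{\rho}(z)=O(|z|^{-n-1})$ at infinity, whence $\psi\in L^2(\mathbb{R})$ and $\psi\in H^2(\mathbb{C}_+)$. The substantial case is $z_0\in E_{2n+2}(b)\subset\mathbb{R}$. Here $\psi(z)=\bigl(N_1(z)-N_2(z)\bigr)/(z-\overline{z_0})^{n+1}$, where $N_1$ is the polynomial $\sum_{j=0}^{n}\frac{\overline{b^{(j)}(z_0)}}{j!}(z-\overline{z_0})^j$ and $N_2(z):=\overline{b(z_0)}\sum_{j=0}^{n}\binom{n+1}{j+1}(-1)^j\overline{b(z_0)}^{\,j}b(z)^j\in H^\infty(\mathbb{C}_+)$, so a priori $\psi$ has a pole of order $n+1$ at the boundary point $z_0$. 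The claim is that this pole is cancelled because $N_1-N_2$ vanishes at $z_0$ to order at least $n+1$. On the one hand, by the binomial identity above, $1-b\,N_2=(1-\overline{b(z_0)}b)^{n+1}$, which vanishes at $z_0$ to order $\ge n+1$ since $1-\overline{b(z_0)}b(z_0)=1-|b(z_0)|^2=0$ and, by the Ahern--Clark theorem, $b$ has finite nontangential derivatives at $z_0$ up to order $2n+1$ (the hypothesis $z_0\in E_{2n+2}(b)$ also forces $|b(z_0)|=1$, via finiteness of $\int_{\mathbb{R}}|\log|b(t)||\,|z_0-t|^{-2}\,dt$). On the other hand $1-b\,N_1=(z-\overline{z_0})^{n+1}k_{z_0,n}^{b}$; if $1-b\,N_1$ vanished at $z_0$ only to some order $m\le n$, the boundary values of $k_{z_0,n}^{b}$ would behave like $|t-z_0|^{m-n-1}$ near $z_0$, contradicting $k_{z_0,n}^{b}\in\mathcal{H}(b)\subset L^2(\mathbb{R})$; hence $1-b\,N_1$ also vanishes at $z_0$ to order $\ge n+1$. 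Subtracting and dividing by $b$ (legitimate: $|b(z_0)|=1$, so $1/b$ inherits the same boundary smoothness near $z_0$) shows $N_1-N_2$ vanishes at $z_0$ to order $\ge n+1$. Consequently $\psi$ has locally bounded boundary values near $z_0$ — and manifestly elsewhere on $\mathbb{R}$ — with $\psi(z)=O(|z|^{-1})$ at infinity, and $\psi$ is analytic in $\mathbb{C}_+$ and of bounded type (a quotient of a Smirnov-class function by a power of $z-\overline{z_0}$); therefore its boundary function lies in $L^2(\mathbb{R})$ and $\psi\in H^2(\mathbb{C}_+)$.

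The main obstacle is exactly this last step, i.e. the removability of the boundary singularity of $\psi$ at $z_0$: it amounts to matching, up to order $n$, the nontangential Taylor expansions at $z_0$ of the two numerators $N_1$ (built from the boundary derivatives $b^{(j)}(z_0)$) and $N_2$ (built from the powers $b(z_0)^j$ and $b(z)^j$), and it is precisely here that the hypothesis $z_0\in E_{2n+2}(b)$, rather than merely $z_0\in E_2(b)$, is genuinely used; some care is also required to pass from nontangential boundedness of $\psi$ near $z_0$ to actual membership in $H^2(\mathbb{C}_+)$.
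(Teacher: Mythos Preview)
Your proof and the paper's share the same core: both hinge on the single algebraic identity
\[
(k_{z_0}^b)^{n+1}-k_{z_0,n}^b=b\psi,\qquad \mathfrak{K}_{z_0,n}^\rho-k_{z_0,n}^\rho=-\psi,
\]
(your $\psi$ is the negative of the paper's) and both use the orthogonality $\overline{b}f-\rho g\perp H^2(\mathbb{C}_+)$ to pass from the old representation to the new one. The genuine difference is in how the two membership statements are obtained. You try to prove $\psi\in H^2(\mathbb{C}_+)$ first and deduce both $(k_{z_0}^b)^{n+1}\in H^2$ and $\mathfrak{K}_{z_0,n}^\rho\in L^2(\rho)$ from it. The paper proves these memberships independently and by much simpler means: $(k_{z_0}^b)^{n+1}\in H^2$ follows by induction from the recurrence
\[
(k_{z_0}^b)^{\ell+1}=(1-\overline{b(z_0)}b)^\ell\,k_{z_0,\ell}^b+b\sum_{j=1}^\ell\overline{a_j}(1-\overline{b(z_0)}b)^{j-1}(k_{z_0}^b)^{\ell+1-j},
\]
since each $k_{z_0,\ell}^b\in\mathcal{H}(b)$ by \cite{Fricain-Mashreghi2}; and $\mathfrak{K}_{z_0,n}^\rho\in L^2(\rho)$ is immediate because it is a bounded $H^\infty$ function times $(t-\overline{z_0})^{-(n+1)}$, and $\int_\mathbb{R}(1-|b(t)|^2)|t-z_0|^{-(2n+2)}\,dt<\infty$ follows directly from the hypothesis $z_0\in E_{2n+2}(b)$ via $1-|b|^2\le 2\big|\log|b|\big|$.

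The gap you flag at the end is real and is the weak point of your route. For a boundary point $z_0\in E_{2n+2}(b)$ you argue that $N_1-N_2$ ``vanishes to order $\ge n+1$'' at $z_0$ and hence $\psi$ is bounded there. But vanishing of the first $n+1$ \emph{nontangential} derivatives at a boundary point does not, by itself, yield a pointwise Taylor estimate $|G(z)|\lesssim|z-z_0|^{n+1}$; without such an estimate one cannot conclude that $G(z)/(z-z_0)^{n+1}$ has a nontangential limit, is bounded, or lies in $L^2$. Likewise, the step ``$k_{z_0,n}^b\in L^2$ forces $1-bN_1$ to vanish to order $\ge n+1$'' presupposes exactly the kind of Taylor control that is not available from nontangential convergence alone. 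The paper's organization avoids placing any weight on this delicate point for the two membership claims; for the representation formula itself, the case $z_0\in\mathbb{C}_+$ (where $\psi\in H^2$ is obvious, each term being $H^\infty$ times a Cauchy kernel with pole in $\mathbb{C}_-$) is what one really needs, and the boundary case is then reached by limits rather than by a direct verification that $\psi\in H^2$ at $z_0\in\mathbb{R}$.
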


\begin{proof}
Let $a_j=b^{(j)}(z_0)/j!$. Then
\begin{align}\label{eq:recurrence-relation}
k_{z_0,\ell }^b(z) =&\,\, \dfrac{1-\overline{b(z_0)}b(z)-b(z)\displaystyle\sum_{j=1}^\ell
\overline{a_j}(z-\overline{z_0})^j}{(z-\overline{z_0})^{\ell +1}}\nonumber\\
=&\,\,\frac{1-\overline{b(z_0)}b(z)}{(z-\overline{z_0})^{\ell
+1}}-b(z)\displaystyle\sum_{j=1}^\ell
\frac{\overline{a_j}}{(z-\overline{z_0})^{\ell +1-j}}\nonumber.
\end{align}
Hence, multiplying by $(1-\overline{b(z_0)}b(z))^\ell$, we obtain
\begin{equation}\label{eq:recurrence-relation}
(k_{z_0}^b)^{\ell +1}(z)= (1-\overline{b(z_0)}b(z))^\ell \,
k_{z_0,\ell }^b(z) +b(z)\sum_{j=1}^\ell
\overline{a_j}(1-\overline{b(z_0)}b(z))^{j-1}(k_{z_0}^b)^{\ell
+1-j}(z).
\end{equation}
Since $z_0\in\mathbb{C}_+\cup E_{2n+2}(b)$, according to \cite[Proposition
3.1 and Lemma 3.2]{Fricain-Mashreghi2}, the functions $k_{z_0}^b$
and $k_{z_0,\ell}^b$ ($1\leq \ell \leq n$) belong to $\mathcal{H}(b)$.
Hence, using the recurrence relation (\ref{eq:recurrence-relation})
and that $1-\overline{b(z_0)}b(z)\in H^\infty(\mathbb{C}_+)$, we see
immediately by induction that $(k_{z_0}^b)^{n+1}\in H^2(\mathbb{C}_+)$.

We prove now that $\mathfrak{K}_{z_0,n}^\rho\in L^2(\rho)$. Write
$\mathfrak{K}_{z_0,n}^\rho(t)=(t-\overline{z_0})^{-(n+1)}\varphi(t)$,
with
\[
\varphi(t)=\overline{b(z_0)}\sum_{j=0}^n\binom{n+1}{j+1}(-1)^j{\overline{b^j(z_0)}}b^j(t).
\]
Since $\varphi\in L^\infty(\mathbb{R})$, it is sufficient to prove that
$(t-\overline{z_0})^{-(n+1)}\in L^2(\rho)$. If $z_0\in\mathbb{C}_+$, this
fact is trivial and if $z_0\in E_{2n+2}(b)$, the inequality
$1-x\less |\log x|$, $x\in(0,1]$, implies
\[
\int_\mathbb{R}\frac{1-|b(t)|^2}{|t-z_0|^{2n+2}}\,dt\less
2\int_\mathbb{R}
\frac{\big|\log|b(t)|\big|}{|t-z_0|^{2n+2}}\,dt<+\infty
\]
which is the required result.

It remains to prove (\ref{eq:integrale-cle}). Let $\psi$ be any
element of $H^2(\mathbb{C}_+)$. According to
(\ref{eq:representation-de-Branges}), we have
\begin{align*}
\frac{2\pi i}{n!} \, f^{(n)}(z_0) =& \langle f, k_{z_0,n}^b
\rangle_2+\langle \rho g,k_{z_0,n}^\rho\rangle_2\\
=& \langle f, k_{z_0,n}^b-b\psi \rangle_2+\langle \bar{b}f, \psi
\rangle_2+\langle \rho g,k_{z_0,n}^\rho\rangle_2.
\end{align*}
But we have $T_{\overline b}f=\widetilde{T_\rho}g$, which means that
$\overline{b}f-\rho g\perp H^2(\mathbb{C}_+)$. Since $\psi\in H^2(\mathbb{C}_+)$,
it follows that $\langle \overline{b}f,\psi \rangle_2=\langle \rho
g,\psi \rangle_2$. Hence the identity
\begin{equation} \label{E:bpsi}
\frac{2\pi i }{n!}f^{(n)}(z_0)=\langle f, k_{z_0,n}^b-b\psi
\rangle_2+\langle \rho g,k_{z_0,n}^\rho+\psi \rangle_2
\end{equation}
holds for each $\psi \in H^2(\mathbb{C}_+)$. A very specific $\psi$ gives us  the
required representation. To find $\psi$ note that, on one hand, we have
\begin{eqnarray*}
k_{z_0,n}^b(t) - (k_{z_0}^b)^{n+1}(t) &=& \frac{1-b(t)\sum_{j=0}^n\overline{a_j}(t-\overline{z_0})^j-(1-\overline{b(z_0)}
b(t))^{n+1}}{(t-\overline{z_0})^{n+1}}\\
&=& \frac{1-(1-\overline{b(z_0)}
b(t))^{n+1}}{(t-\overline{z_0})^{n+1}} -b(t)
\frac{\sum_{j=0}^n\overline{a_j}
(t-\overline{z_0})^j}{(t-\overline{z_0})^{n+1}} = b(t)\psi(t),
\end{eqnarray*}
where
$$
\psi(t) = \frac{\sum_{j=1}^{n+1} (-1)^{j+1} \binom{n+1}{j}
(\overline{b(z_0)})^{j} (b(t))^{j-1}} {(t-\overline{z_0})^{n+1}} -
\frac{\sum_{j=0}^n\overline{a_j}
(t-\overline{z_0})^j}{(t-\overline{z_0})^{n+1}}.
$$
On the other hand, we easily see that
\begin{eqnarray*}
k_{z_0,n}^\rho(t)+\psi(t) &=& \frac{\sum_{j=1}^{n+1} (-1)^{j+1}
\binom{n+1}{j} (\overline{b(z_0)})^{j} (b(t))^{j-1}}
{(t-\overline{z_0})^{n+1}}\\
&=& \overline{b(z_0)} \frac{\sum_{j=0}^n (-1)^{j} \binom{n+1}{j+1}
(\overline{b(z_0)})^{j} (b(t))^{j}} {(t-\overline{z_0})^{n+1}} =
\mathfrak{K}_{z_0,n}^\rho(t).
\end{eqnarray*}
Therefore, (\ref{eq:integrale-cle}) follows immediately from
(\ref{E:bpsi}).

\end{proof}

We now introduce the weight involved in our Bernstein-type
inequalities. Let $1< p\leq 2$ and let $q$ be its conjugate
exponent. Let $n\in \mathbb{N}$.
Then, for $z\in\overline{\mathbb{C}_+}$, we define
$$
w_{p,n}(z):=\min\left\{\, \|{(k_z^b)}^{n+1}\|_q^{-pn/(pn+1)}, \,\,
\|\rho^{1/q}\mathfrak{K}_{z,n}^\rho\|_q^{-pn/(pn+1)} \,\right\};
$$
we assume $w_{p,n}(x)=0$, whenever $x\in \mathbb{R}$ and at least
one of the functions $(k_x^b)^{n+1}$ or $\rho^{1/q}
\mathfrak{K}_{x,n}^\rho$ is not in $L^q(\mathbb{R})$. In what follows we will
write $w_p$ for $w_{p,1}$.

The choice of the weight is motivated by representation
(\ref{eq:integrale-cle}) which shows that the quantity
$\max \big\{ \|{(k_z^b)}^{n+1}\|_2,
\, \|\rho^{1/2}\mathfrak{K}_{z,n}^\rho\|_2 \big\}$ is related to
the norm of the functional $f\mapsto f'(z)$ on $\mathcal{H}(b)$.
Moreover, we strongly believe that the norms of reproducing
kernels are an important characteristic of the space $\mathcal{H}(b)$
which captures many geometric properties of $b$
(see Section 5 for certain estimates confirming this point).

Using similar arguments as in the proof of Proposition
\ref{prop:nouvelle-representation}, it is easy to see
that $\rho^{1/q}\mathfrak{K}_{x,n}^\rho \in
L^q(\mathbb{R})$ if $x\in E_{q(n+1)}(b)$. It is also natural to
expect that  $(k_x^b)^{n+1}\in L^q(\mathbb{R})$ for $x\in
E_{q(n+1)}(b)$. This is true when $b$ is an inner function, by a
result of Cohn \cite{Cohn86}, and for a general function $b$ with
$q=2$ by (\ref{eq:recurrence-relation}) and \cite[Lemma
3.2]{Fricain-Mashreghi}. However, it seems that the methods of
\cite{Cohn86} and \cite{Fricain-Mashreghi} do not apply in the general case.

\begin{question}\label{quest-0}
{\rm Is it true that for $x\in\mathbb{R}$,
$(k_x^b)^{n+1}\in L^q(\mathbb{R})$ if $x\in E_{q(n+1)}(b)$? }
\end{question}

\begin{remark}
{\rm If $f\in\mathcal{H}(b)$ and $1< p\leq 2$, then $(f^{(n)}w_{p,n})(x)$ is
well-defined on $\mathbb{R}$. It follows from the \cite{Fricain-Mashreghi}
that $f^{(n)}(x)$ and $w_{p,n}(x)$ are finite if $S_{2n+2}(x)<+\infty$. If
$S_{2n+2}(x)=+\infty$, then $\|(k_{x}^b)^{n+1}\|_2=+\infty$. Hence,
$\|(k_{x}^b)^{n+1}\|_q=+\infty$ which, by definition, implies $w_{p,n}(x)=0$,
and thus we may assume $(f^{(n)}w_{p,n})(x)=0$.}
\end{remark}

\begin{remark}
{\rm In the inner case, we have $\rho(t)\equiv 0$ and the second term in the
definition of the weight $w_{p,n}$ disappears. It should be emphasized
that in the general case both terms are essential: below we show
(Example \ref{ex:secondterm}) that the
norm $\|\rho^{1/q}\mathfrak{K}_{z,n}^\rho\|_q$
can not be majorized uniformly by the norm $\|{(k_z^b)}^{n+1}\|_q$.}
\end{remark}

\begin{Lem}\label{lem:minoration}
For $1< p\leq 2$, $n\in \mathbb{N}$, there is a constant $A=A(p,n)>0$ such that
$$
w_{p,n}(z)\geq A\,\frac{(\Ima z)^n}{(1-|b(z)|)^{\frac{pn}{q(pn+1)}}}, \qquad
z\in\mathbb{C}_+.
$$
\end{Lem}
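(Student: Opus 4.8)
The plan is to reduce the asserted lower bound on $w_{p,n}$ to two upper estimates, one for each of the two quantities appearing in its definition, namely
\begin{equation*}
\|(k_z^b)^{n+1}\|_q^q\le\frac{C\,(1-|b(z)|)}{(\Ima z)^{q(n+1)-1}}
\qquad\text{and}\qquad
\|\rho^{1/q}\mathfrak{K}_{z,n}^\rho\|_q^q\le\frac{C\,(1-|b(z)|)}{(\Ima z)^{q(n+1)-1}}
\end{equation*}
for $z\in\mathbb{C}_+$, with $C=C(p,n)$. Granting these, the conclusion is bookkeeping with the exponents: since $1/p+1/q=1$ gives $q(n+1)-1=q(n+1/p)$, both quantities are at most $C^{1/q}(1-|b(z)|)^{1/q}(\Ima z)^{-(n+1/p)}$; raising to the power $-pn/(pn+1)$ (which reverses inequalities), taking the minimum, and using the identity $(n+1/p)\cdot pn/(pn+1)=n$, one gets
\begin{equation*}
w_{p,n}(z)\ge C^{-pn/(q(pn+1))}\,\frac{(\Ima z)^n}{(1-|b(z)|)^{pn/(q(pn+1))}},
\end{equation*}
which is the claim with $A=C^{-pn/(q(pn+1))}>0$. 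Note also that for $z\in\mathbb{C}_+$ both $(k_z^b)^{n+1}$ and $\rho^{1/q}\mathfrak{K}_{z,n}^\rho$ belong to $L^q(\mathbb{R})$ (immediate from their $|t-z|^{-(n+1)}$ decay together with $q(n+1)>1$), so $w_{p,n}(z)$ is indeed this minimum.

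For the first estimate I would start from the crude bound $|k_z^b(t)|\le2/|t-z|$ (since $|1-\overline{b(z)}b(t)|\le2$ and $|t-\overline{z}|=|t-z|$ for $t\in\mathbb{R}$), and, $q(n+1)\ge2$ holding because $q\ge2$ and $n\ge1$, peel off the surplus powers via $|t-z|\ge\Ima z$ on $\mathbb{R}$:
\begin{equation*}
|k_z^b(t)|^{q(n+1)}=|k_z^b(t)|^2\,|k_z^b(t)|^{q(n+1)-2}\le\frac{2^{q(n+1)-2}}{(\Ima z)^{q(n+1)-2}}\,|k_z^b(t)|^2.
\end{equation*}
It then remains to bound $\int_{\mathbb{R}}|k_z^b(t)|^2\,dt$ by $\pi(1-|b(z)|^2)/\Ima z$; this follows either from the contractive inclusion $\mathcal{H}(b)\subset H^2(\mathbb{C}_+)$ and the reproducing-kernel identity $\|k_z^b\|_b^2=2\pi i\,k_z^b(z)=\pi(1-|b(z)|^2)/\Ima z$, or by direct computation, writing $|t-z|^{-2}=(\pi/\Ima z)\,P_z(t)$ with $P_z$ the normalized Poisson kernel at $z$, expanding $|1-\overline{b(z)}b(t)|^2$, and using the Poisson representation $b(z)=\int_{\mathbb{R}}b(t)P_z(t)\,dt$ together with $\int_{\mathbb{R}}|b(t)|^2P_z(t)\,dt\le1$. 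Since $1-|b(z)|^2\le2(1-|b(z)|)$, the first estimate follows.

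For the second estimate, formula (\ref{eq:nouveau-noyau}) gives $|\mathfrak{K}_{z,n}^\rho(t)|\le(2^{n+1}-1)\,|b(z)|\,|t-z|^{-(n+1)}$ (the numerator sum has modulus at most $\sum_{j=0}^n\binom{n+1}{j+1}=2^{n+1}-1$, using $|\overline{b(z)}^j b(t)^j|\le1$ and $|t-\overline{z}|=|t-z|$), whence $\rho(t)\,|\mathfrak{K}_{z,n}^\rho(t)|^q\le(2^{n+1}-1)^q\,|b(z)|^q\,(1-|b(t)|^2)\,|t-z|^{-q(n+1)}$. Peeling off $|t-z|^{-(q(n+1)-2)}\le(\Ima z)^{-(q(n+1)-2)}$ as before reduces matters to estimating $\int_{\mathbb{R}}(1-|b(t)|^2)\,|t-z|^{-2}\,dt=(\pi/\Ima z)\,(1-\int_{\mathbb{R}}|b(t)|^2P_z(t)\,dt)$; here I would invoke $|b(z)|^2\le\int_{\mathbb{R}}|b(t)|^2P_z(t)\,dt$ (Cauchy--Schwarz applied to the Poisson representation, or subharmonicity of $|b|^2$) to bound this by $\pi(1-|b(z)|^2)/\Ima z$, and then use $|b(z)|^q\le1$, $1-|b(z)|^2\le2(1-|b(z)|)$. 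I do not expect a real obstacle; the two points needing care are that $q(n+1)\ge2$ (where the hypotheses $n\ge1$ and $q\ge2$ enter, making the surplus $|t-z|$ powers absorbable into $\Ima z$ in the correct direction) and the two opposite uses of the mean value inequality for $|b|^2$ — $\int_{\mathbb{R}}|b(t)|^2P_z(t)\,dt\le1$ for the $(k_z^b)^{n+1}$ term, and $\int_{\mathbb{R}}|b(t)|^2P_z(t)\,dt\ge|b(z)|^2$ for the $\mathfrak{K}_{z,n}^\rho$ term.
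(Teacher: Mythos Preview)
Your proof is correct and follows the same overall strategy as the paper: bound both $\|(k_z^b)^{n+1}\|_q^q$ and $\|\rho^{1/q}\mathfrak{K}_{z,n}^\rho\|_q^q$ by $C(1-|b(z)|)/(\Ima z)^{q(n+1)-1}$ via the crude pointwise reduction $|t-z|^{-(q(n+1)-2)}\le(\Ima z)^{-(q(n+1)-2)}$, then unwind the exponents. The first estimate is handled identically (the paper invokes the reproducing-kernel identity $\|k_z^b\|_b^2\asymp(1-|b(z)|)/\Ima z$, which is exactly your contractive-inclusion argument).

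For the second estimate your route is actually cleaner than the paper's. The paper bounds $\int_{\mathbb{R}}(1-|b(t)|)\,|t-z|^{-2}\,dt$ by $C(1-|b(z)|)/\Ima z$ via a case split on $|b(z)|\gtrless1/2$: when $|b(z)|\ge1/2$ they use $1-|b(t)|\le\big|\log|b(t)|\big|$ and the Poisson formula for $\log|O_b|$ to get $\log(1/|O_b(z)|)\asymp 1-|O_b(z)|\le 1-|b(z)|$. Your one-line argument --- write the integral as $(\pi/\Ima z)\big(1-\int|b|^2P_z\big)$ and apply $|b(z)|^2\le\int|b|^2P_z$ from Jensen/subharmonicity --- avoids both the logarithm detour and the case analysis, at no cost.
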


\begin{proof}
On one hand, note that
\begin{align*}
\|{(k_z^b)}^{n+1}\|_q^q = \,\, &
\int_\mathbb{R} \left|\frac{1-\overline{b(z)}b(t)}{t-\overline z}
\right|^{(n+1)q}\,dt \leq \,\, \frac{C}{(\Ima z)^{(n+1)q-2}} \int_\mathbb{R}\left|
\frac{1-\overline{b(z)}b(t)}{t-\overline z}\right|^2\,dt\\
 = \,\,& \frac{C}{(\Ima z)^{(n+1)q-2}}\|k_z^b\|_b^2\leq
C\,\,\frac{1-|b(z)|}{(\Ima z)^{(n+1)q-1}}.
\end{align*}

On the other hand, we have
\begin{align*}
\|\rho^{1/q}\mathfrak{K}_{z,n}^\rho\|_q^q
= \,\, & \int_\mathbb{R}
\bigg| \frac{b(z)\sum_{j=0}^n\binom{n+1}{j+1}(-1)^j\overline{b(z)}^jb^j(t)}{(t-\overline
z)^{n+1}}\bigg|^q\,(1-|b(t)|^2)\,dt \\
\leq \,\, & \frac{C}{(\Ima z)^{(n+1)q-2}}\,\,\int_\mathbb{R}
\frac{1-|b(t)|}{|t-z|^2}\,dt.
\end{align*}
If $|b(z)|<1/2$, then we obviously have
\[
\int_\mathbb{R}\frac{1-|b(t)|}{|t-z|^{2}}dt\le C\frac{1-|b(z)|}{\Ima z},
\]
and if $|b(z)|\geq 1/2$, using  $1-|b(t)| \le \big|\log |b(t)|\big|$, we get
\[
\Ima z \int_\mathbb{R}\frac{1-|b(t)|}{|t-z|^{2}}dt
\le
\Ima z \int_\mathbb{R}\frac{\big|\log |b(t)|\big|}{|t-z|^{2}}dt=
\pi \log \frac{1}{|O_b(z)|} \asymp 1-|O_b(z)|,
\]
since $|O_b(z)|\ge |b(z)|\ge 1/2$. We recall that $O_b$ is the outer
part of $b$. Therefore, in any case we have
\[
\int_\mathbb{R}\frac{1-|b(t)|}{|t-z|^{2}}dt\le C\frac{1-|b(z)|}{\Ima z},
\]
and we get
\[
\|\rho^{1/q}\mathfrak{K}_{z,n}^\rho\|_q^q\leq C\,\,\frac{1-|b(z)|}{(\Ima
z)^{(n+1)q-1}}. \]
To complete the proof, it suffices to note that $\frac{(n+1)q-1}{q}=
n+\frac 1p=\frac{np+1}{p}$.

\end{proof}

Representation formulae discussed above reduce the study of
differentiation in de Branges--Rovnyak spaces $\mathcal{H}(b)$ to the study
of certain integral operators.

\section{Bernstein-type inequalities}

A Borel measure $\mu$ in the closed upper half-plane
$\overline{\mathbb{C}_+}$ is said to be a Carleson measure if there is a
constant $C_\mu>0$ such that
\begin{equation}
\label{carl} \mu(\, S(x,h) \,)\le C_\mu \, h,
\end{equation}
for all squares $S(x,h)=[x,x+h]\times[0,h]$, $x\in\mathbb{R}$, $h>0$,
with the lower side on the real axis. We denote the class of Carleson
measures by $\mathcal{C}$. Recall that, according to a classical
theorem of Carleson, $\mu \in \mathcal{C}$ if and only if
$H^p(\mathbb{C}_+) \subset L^p(\mu)$ for some (all) $p>0$.

One of our main results in this paper is the following
Bernstein-type inequality.

\begin{Thm}
\label{maintheorem}
Let $\mu \in \mathcal{C}$, let $n\in\mathbb{N}$, let $1< p\leq 2$, and let
$$(T_{p,n} f)(z)=f^{(n)}(z)w_{p,n}(z),\qquad f\in\mathcal{H}(b).$$
If $1<p<2$, then $T_{p,n}$
is a bounded operator from $\mathcal{H}(b)$ to $L^2(\mu)$,
that is, there is a constant
$C=C(\mu,p,n)>0$ such that
\begin{equation}
\label{eq:mainineq}
\|f^{(n)}w_{p,n}\|_{L^2(\mu)} \le C\|f\|_b,
\qquad f\in \mathcal{H}(b).
\end{equation}
If $p=2$, then $T_{2,n}$ is of weak type $(2,2)$ as an
operator from $\mathcal{H}(b)$ to $L^2(\mu)$.
\end{Thm}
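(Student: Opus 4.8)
The plan is to use the representation formula \eqref{eq:integrale-cle} to write $f^{(n)}(z)$ as a sum of two integral functionals and then estimate each one by reduction to a Carleson-embedding estimate for $H^p$. First I would fix $f\in\mathcal{H}(b)$ with $\|f\|_b\le 1$, and let $g\in H^2(\rho)$ be the associated function with $T_{\overline b}f=\widetilde T_\rho g$; note that $\|g\|_{L^2(\rho)}\le\|f\|_b$. By \eqref{eq:integrale-cle},
\[
|f^{(n)}(z)|\le\frac{n!}{2\pi}\Bigl(\,\bigl|\langle f,(k_z^b)^{n+1}\rangle_2\bigr|+\bigl|\langle \rho g,\mathfrak{K}_{z,n}^\rho\rangle_2\bigr|\,\Bigr).
\]
For the first term I would apply H\"older's inequality on $\mathbb{R}$ with exponents $p$ and $q$: since $f\in H^2\subset H^p$ (on bounded-support regions, or after the standard normalization) this is not quite immediate, so the cleaner route is to interpolate. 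Write $\langle f,(k_z^b)^{n+1}\rangle_2$ and split the power: because $(k_z^b)^{n+1}=(k_z^b)\cdot(k_z^b)^{n}$ and $|k_z^b(t)|\le C/|t-z|$ uniformly (the reproducing-kernel bound), one controls $|f^{(n)}(z)|$ by a quantity of the form $\|f\|_2^{1/(pn+1)}$ times $(\sup_t|f(t)|\cdot(\text{something}))^{pn/(pn+1)}$ — this is exactly the classical Baranov trick from \cite{Baranov05}: the weight exponent $pn/(pn+1)$ is chosen so that an $L^2$–$L^\infty$ interpolation turns the derivative functional into something of the right homogeneity.

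**The main estimate.** After this reduction the problem becomes: show that the sublinear operator
\[
(Vf)(z)=w_{p,n}(z)\Bigl(\,\|f\|_2^{1/(pn+1)}\,\bigl\|k_z^b f\bigr\|_{?}^{pn/(pn+1)}+\text{($g$-term)}\,\Bigr)
\]
maps into $L^2(\mu)$. The heart of the matter is an inequality of the shape $\int_{\overline{\mathbb{C}_+}}|h(z)|^{\,2pn/(pn+1)}\,w_{p,n}(z)^2\,d\mu(z)\le C\|h\|_{H^p}^{\,2pn/(pn+1)}$ for $h$ in a suitable Hardy-type class, together with the trivial $L^2$ factor. By the definition of $w_{p,n}$ as a minimum of $\|(k_z^b)^{n+1}\|_q^{-pn/(pn+1)}$ and $\|\rho^{1/q}\mathfrak{K}_{z,n}^\rho\|_q^{-pn/(pn+1)}$, the weight precisely cancels the two $L^q$-norms that appear after H\"older. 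One is then left with a bound like
\[
\int\Bigl(\frac{|f(z)|}{\text{dist to }\{|b|<\varepsilon\}}\Bigr)^{2}d\mu(z)\lesssim\|f\|_2^2
\]
or rather, after the interpolation exponents are sorted out, a genuine Carleson embedding $H^p(\mathbb{C}_+)\subset L^s(\mu)$ for an appropriate $s$, which holds for every $\mu\in\mathcal C$ by Carleson's theorem. The case distinction $p<2$ versus $p=2$ enters here: for $p<2$ the exponent $s=2pn/(pn+1)\cdot\frac{\ }{\ }$ works out to be strictly larger than $p$, giving the strong-type inequality by Carleson plus H\"older; for $p=2$ the borderline exponent forces us to settle for the weak-type $(2,2)$ conclusion, proved via a Marcinkiewicz-type argument (or directly, by a good-$\lambda$/maximal-function estimate) rather than a bounded embedding.

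**Handling the $g$-term and the boundary.** The second summand, involving $\rho g$ and $\mathfrak{K}_{z,n}^\rho$, is treated in parallel. By Proposition \ref{prop:nouvelle-representation} we already know $\rho^{1/q}\mathfrak{K}_{z,n}^\rho\in L^q$, and $\rho^{1/2}g\in L^2$, so H\"older with exponents $2$ and $q$ — wait, $2$ and $2$ — gives $|\langle\rho g,\mathfrak{K}_{z,n}^\rho\rangle_2|\le\|g\|_{L^2(\rho)}\,\|\rho^{1/2}\mathfrak{K}_{z,n}^\rho\|_2$, and again we interpolate between the $L^2(\rho)$ bound and an $L^\infty$-type bound on $g$ to bring in the exponent $pn/(pn+1)$; the weight's second branch absorbs $\|\rho^{1/q}\mathfrak{K}_{z,n}^\rho\|_q$, and the same Carleson embedding finishes it. One technical point deserving care: the functions $(k_z^b)^{n+1}$ and $\mathfrak{K}_{z,n}^\rho$ depend on $z$ through $b(z)$ and its derivatives, so the pointwise bounds must be uniform in $z$ over $\overline{\mathbb{C}_+}$; Lemma \ref{Lem:derivee} and the estimates in the proof of Lemma \ref{lem:minoration} supply exactly the uniformity needed. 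I expect the main obstacle to be the bookkeeping of the interpolation exponents — verifying that the homogeneity degrees match so that $w_{p,n}$ cancels precisely the right norms and the residual exponent $s$ lands strictly above $p$ when $p<2$ — together with the separate weak-type argument at $p=2$, where one cannot use a clean embedding and must instead run a direct distribution-function estimate against $\mu$.
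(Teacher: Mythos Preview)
Your proposal has a genuine gap: it is missing the central mechanism, which is a splitting of each integral in \eqref{eq:integrale-cle} into a ``near'' part and a ``far'' part according to whether $|t-z|$ is smaller or larger than the threshold $h_i(z)=\bigl(w_{p,n}^{(i)}(z)\bigr)^{1/n}$. The references to ``$L^2$--$L^\infty$ interpolation'' and to a reduction to a Carleson embedding $H^p\subset L^s(\mu)$ are not the right ideas and cannot be made to work as stated: an arbitrary $f\in H^2(\mathbb{C}_+)$ need not lie in $L^\infty$ nor in $H^p$ for $p<2$ (there is no inclusion between Hardy spaces on the line), so the interpolation you sketch has no footing; and a straight H\"older bound $|\langle f,(k_z^b)^{n+1}\rangle|\le\|f\|_p\,\|(k_z^b)^{n+1}\|_q$ followed by multiplication by $w_{p,n}(z)$ leaves a residual factor $\|(k_z^b)^{n+1}\|_q^{1/(pn+1)}$ whose $L^2(\mu)$-norm is not controlled by any Carleson embedding.

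What the paper actually does is this. After multiplying by $w_{p,n}(z)$, write each integral as $\int_{|t-z|\ge h_i(z)}+\int_{|t-z|<h_i(z)}$. On the far region one uses only the crude bound $|(k_z^b)^{n+1}(t)|\le C\,|t-z|^{-(n+1)}$; the prefactor $h_1(z)^n$ then reduces the kernel to $h_1(z)\,|t-z|^{-2}$, and the resulting operator is bounded $L^2(\mathbb{R})\to L^2(\mu)$ by \cite[Theorem~3.1]{Baranov05} (a maximal-function estimate, applicable because $h_i(z)\ge A\,\Ima z$ by Lemma~\ref{lem:minoration}). On the near region one keeps the full kernel $K(z,t)=h_1(z)^n|(k_z^b)^{n+1}(t)|$, checks that the threshold satisfies $h_1(z)=\|K(z,\cdot)\|_q^{-p}$, and invokes \cite[Theorem~3.2]{Baranov05}, which states precisely that operators of the form $f\mapsto\int_{|t-z|<\|K(z,\cdot)\|_q^{-p}}|f(t)|K(z,t)\,dt$ are bounded $L^2\to L^2(\mu)$ when $p<2$ and of weak type $(2,2)$ when $p=2$. \emph{This} theorem, not Carleson's embedding, is the source of both the exponent $pn/(pn+1)$ and the strong/weak dichotomy. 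The $g$-term is handled by the same splitting with $\rho^{1/2}g\in L^2(\mathbb{R})$ in place of $f$ and the kernel $\kappa(z,t)=w_{p,n}^{(2)}(z)\rho^{1/q}(t)|\mathfrak{K}_{z,n}^\rho(t)|$, after which $\|f\|_b^2=\|f\|_2^2+\|g\|_{L^2(\rho)}^2$ finishes the proof.
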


\begin{proof}
According to Proposition \ref{prop:nouvelle-representation},
for all $z\in\overline{\mathbb{C}_+}$ and any
function $f\in\mathcal{H}(b)$, we have
\begin{equation}\label{eq:depart}
\frac{2\pi i }{n!}f^{(n)}(z)w_{p,n}(z)=w_{p,n}(z)\int_\mathbb{R} f(t)
\overline{({k_{z_0}^b})^{n+1}(t)}\,dt +w_{p,n}(z) \int_{\mathbb{R}}
g(t)\rho(t)\overline{\mathfrak{K}_{z,n}^\rho(t)}\,dt.
\end{equation}
Let
$$
w_{p,n}^{(1)}(z):=\|{(k_z^b)}^{n+1}\|_q^{-pn/(pn+1)}, \ \qquad
w_{p,n}^{(2)}(z):=\|\rho^{1/q}\mathfrak{K}_{z,n}^\rho\|_q^{-pn/(pn+1)},
$$
where we assume that $w_{p,n}^{(i)}(z)=0$ if the corresponding
integrand is not in $L^q(\mathbb{R})$, and put
$h_i(z)=(w_{p,n}^{(i)}(z))^{1/n}$, $i=1,2$. We remind that
\[
w_{p,n}(z) = \min \{\, w_{p,n}^{(1)}(z), \, w_{p,n}^{(2)}(z) \}.
\]
We split each of the two integrals in (\ref{eq:depart}) into two
parts, i.e.
$$\frac{2\pi i }{n!}f^{(n)}(z)w_{p,n}(z)=I_1f(z)+I_2f(z)+I_3g(z)+I_4g(z),$$
where
$$I_1f(z)= w_{p,n}(z)\int_{|t-z|\geq h_1(z)}
f(t)\overline{({k_{z}^b})^{n+1}(t)}\,dt,$$
$$I_2f(z)= w_{p,n}(z)\int_{|t-z|< h_1(z)}
f(t)\overline{({k_{z}^b})^{n+1}(t)}\,dt,$$
$$I_3g(z)= w_{p,n}(z)\int_{|t-z|\geq h_2(z)} g(t)\rho(t)
\overline{\mathfrak{K}_{z,n}^\rho(t)}\,dt,$$
$$I_4g(z)= w_{p,n}(z)\int_{|t-z|< h_2(z)} g(t)\rho(t)
\overline{\mathfrak{K}_{z,n}^\rho(t)}\,dt.$$

Note that by Lemma \ref{lem:minoration}, $h_i(z)\geq A \, \Ima z$,
$z\in\mathbb{C}_+$, $i=1,2$. Hence,
\begin{align*}
|I_1f(z)|\leq  &C h_1^n(z)\int_{|t-z|\geq h_1(z)}\frac
{|f(t)|}{|t-z|^{n+1}}\,dt\\
\leq & C h_1(z)\int_{|t-z|\geq h_1(z)}\frac {|f(t)|}{|t-z|^2}\,dt,
\end{align*}
and
\begin{align*}
|I_3g(z)|\leq  & C h_2^n(z)\int_{|t-z|\geq  h_2(z)}\frac
{|g(t)|\rho^{1/2}(t)}{|t-z|^{n+1}}\,dt\\
\leq  & C h_2(z)\int_{|t-z|\geq  h_2(z)}\frac
{|g(t)|\rho^{1/2}(t)}{|t-z|^2}\,dt.
\end{align*}

\noindent Using \cite[Theorem 3.1]{Baranov05}, we see that $I_1 :
L^2(\mathbb{R}) \longrightarrow L^2(\mu)$ and  $I_3:L^2(\rho)
\longrightarrow L^2(\mu)$ are bounded operators. To estimate the
integral $I_2f$, put
\[
K(z,t):=h_1^n(z)|(k_z^b)^{n+1}(t)|.
\]
Then
\begin{align*}
\|K(z,\cdot)\|_q^{-p}=& (h_1(z))^{-pn}\|(k_z^b)^{n+1}\|_q^{-p}\\
=& (h_1(z))^{-pn}(w_{p,n}^{(1)}(z))^{(pn+1)/n}=h_1(z).
\end{align*}
Thus
$$
|I_2f(z)|\leq h_1^n(z)\int_{|t-z|< h_1(z)}
|f(t)||{(k_z^b)}^{n+1}(t)|\,dt=\int_{|t-z|<\|K(z,\cdot)\|_q^{-p}}
|f(t)| K(z,t)\,dt.
$$
Since $\|K(z,\cdot)\|_q^{-p}=h_1(z)\geq A \, \Ima z$, we may apply
\cite[Theorem 3.2]{Baranov05}. Therefore, the operator $I_2$ is of
weak type $(2,2)$ as an operator from $L^2(\mathbb{R})$ to $L^2(\mu)$ if $p=2$
and it is a bounded operator from $L^2(\mathbb{R})$ to $L^2(\mu)$ if $1<p<2$. To
estimate the integral $I_4g$, we use the same technique and put
$$
\kappa(z,t):=\frac {\rho^{1/q}(t)|\mathfrak{K}_{z,n}^\rho(t)|}
{\|\rho^{1/q}\mathfrak{K}_{z,n}^\rho\|_q^{pn/(pn+1)}}.
$$
In other words, $\kappa(z,t)= w_{p,n}^{(2)}(z)\rho^{1/q}(t)|
\mathfrak{K}_{z,n}^\rho(t)|$. Thus
\begin{align*}
|I_4g(z)|\leq
&\,\, w_{p,n}^{(2)}(z)\int_{|t-z|< h_2(z)}
|g(t)|\rho(t)|\mathfrak{K}_{z,n}^\rho(t)|\,dt \\
= & \int_{|t-z|<h_2(z)}|g(t)|\rho^{1/p}(t)\kappa(z,t)\,dt.
\end{align*}
But $\|\kappa(z,\cdot)\|_q^{-p}= (w_{p,n}^{(2)}(z))^{-p}\|\rho^{1/q}
\mathfrak{K}_{z,n}^\rho\|_q^{-p} = h_2(z)$. Hence, we get
\[
|I_4g(z)|\leq
\int_{|t-z|<\|\kappa(z,\cdot)\|_q^{-p}}|g(t)|\rho^{1/p}(t)\kappa(z,t)\,dt.
\]
Since $p\leq 2$ and $\rho(t)\leq 1$, we have
$$
|I_4g(z)|\leq
\int_{|t-z|<\|\kappa(z,\cdot)\|_q^{-p}}|g(t)|\rho^{1/2}(t)\kappa(z,t)\,dt,
$$
and since $\|\kappa(z,\cdot)\|_q^{-p}= h_2(z)\geq A \, \Ima z$, we
may apply again \cite[Theorem 3.2]{Baranov05}. Therefore, the
operator $I_4$ is of weak type $(2,2)$ as an operator from
$L^2(\rho)$ to $L^2(\mu)$ if $p=2$ and it is a bounded operator from $L^2(\rho)$ to $L^2(\mu)$ if $1< p<2$.

To conclude it remains to note that
\[
\|f\|_b^2=\|f\|_2^2+\|g\|_\rho^2,
\]
which implies that the operators $f\mapsto f$ from $\mathcal{H}(b)$ to $H^2(\mathbb{C}_+)$
and $f\mapsto g$ from $\mathcal{H}(b)$ to $L^2(\rho)$ are contractions.

\end{proof}

\begin{example}\label{ex:secondterm}
{\rm We show that for a general function $b$ both terms in the definition
of the weight $w_{p,n}$ are important.
Obviously, for an inner $b$ the norm
$\|\rho^{1/q}\mathfrak{K}_{z,n}^\rho\|_q$ vanishes.
However, for some outer functions $b$ it may be essentially
larger than $\|{(k_z^b)}^{n+1}\|_q$.

Let $\varepsilon\in (0,1)$ and let $b$ be an outer function such
that $|b(t)|=\varepsilon$
for $|t|<1$ and $|b(t)|=1$ for $|t|>1$. Note that $b(z) =
\exp\Big(-\frac{i}{\pi}\log\varepsilon \log\frac{z-1}{z+1}\Big)$,
where $\log$ is the main branch of the logarithm in
$\mathbb{C}\setminus (-\infty, 0]$. We show that
\begin{equation}\label{eq:ex1}
\sup\limits_{y>0} \frac{\|\rho^{1/q}\mathfrak{K}_{iy,1}^\rho\|_q}
{\|{(k_{iy}^b)}^2\|_q}  \longrightarrow \infty \quad \mbox{as}\quad \varepsilon \longrightarrow 1-,
\end{equation}
and so, the second term in the weight $w_{p,1}$
can be dominating. Note that $b(iy)\to\varepsilon$ and $b(t)\to\varepsilon$,
as $y\to 0+$ and $|t|\le \sqrt{y}$.
Hence, for a fixed $\varepsilon$ and sufficiently small $y>0$
we have
\[
\int _{|t|\le\sqrt{y}}  |k_{iy}^b (t)|^{2q} dt
= \int _{|t|\le\sqrt{y}}
\left|\frac{1-\overline{b(iy)}b(t)}{t+iy}\right|^{2q}dt
\le C(1-\varepsilon)^{2q}
\int _{|t|\le\sqrt{y}}
\frac{dt}{|t+iy|^{2q}}.
\]
Thus
\begin{equation}\label{eq:ex2}
\int _{|t|\le\sqrt{y}}
\left|\frac{1-\overline{b(iy)}b(t)}{t+iy}\right|^{2q}dt
\le C\frac{(1-\varepsilon)^{2q}}{y^{2q-1}},
\end{equation}
whereas
\begin{equation}\label{eq:ex3}
\int_{|t|>\sqrt{y}}
\left|\frac{1-\overline{b(iy)}b(t)}{t+iy}\right|^{2q}dt \le Cy^{-q+1/2}.
\end{equation}
On the other hand,
\[
\mathfrak{K}_{iy,1}^\rho(t) =
\overline{b(iy)} \frac{2-\overline{b(iy)} b(t)}{(t+iy)^2},
\]
and so

\[
\|\rho^{1/q}\mathfrak{K}_{iy,1}^\rho\|^q_q \asymp |b(iy)|^q
\int_\mathbb{R} \frac{1-|b(t)|}{|t+iy|^{2q}} \asymp
\frac{1-\varepsilon}{y^{2q-1}}.
\]
Combining the last estimate with (\ref{eq:ex2})
and  (\ref{eq:ex3}), we obtain  (\ref{eq:ex1}). }
\end{example}

\begin{remark}
{\rm It should be emphasized that the constants
in the Bernstein-type inequalities corresponding
to Theorem \ref{maintheorem} depend only on $p,n$
and the Carleson constant $C_\mu$ of the measure $\mu$,
but not on $b$ (the properties of $b$ are contained in the weight
$w_{p,n}$ in the left-hand side of (\ref{eq:mainineq})). }
\end{remark}

\begin{remark}
{\rm All the results stated above have their natural analogues for
the spaces $\mathcal{H}(b)$ in the unit disc.
In particular, Theorem \ref{maintheorem} remains true when we replace
the kernels for the half-plane by the kernels for the disc. The case of inner functions
in the disc is considered in detail in \cite{Baranov06}. }
\end{remark}

\begin{remark}
{\rm An important feature of the de Branges--Rovnyak spaces theory is the difference
between the extreme (i.e. $b$ is an extreme point of the unit ball of $H^\infty(\mathbb{C}_+)$)
and the non-extreme cases. Our Bernstein inequality applies to both cases.
However, in the extreme case one can expect more regularity near the boundary
and this situation is more interesting for us.}
\end{remark}

\section{Distances to the level sets}

To apply Theorem \ref{maintheorem}, one should have effective
estimates for the weight $w_{p,n}$, that is, for the norms
of the reproducing kernels. In this section we relate the weight
$w_{p,n}$ to the distances to the level sets of $|b|$.
We start with some notations. Denote by $\sigma(b)$
the boundary spectrum of $b$, i.e.
$$
\sigma(b) := \Big\{x\in \mathbb{R} :
\liminf_{\substack{z \longrightarrow x\\ z\in\mathbb{C}_+}} |b(z)|<1 \Big\}.
$$
Then, for $b=BI_\mu O_b$,
$\clos \sigma (b)$ is the smallest closed subset of $\mathbb{R}$
containing the limit points of the zeros of the Blaschke product $B$
and the supports of the measures $\mu$ and $\log|b(t)|\, dt$.
It is well known and easy to see that $b$ and any element of $\mathcal{H}(b)$
has an analytic extension through any interval from the
open set $\mathbb{R} \setminus \clos \sigma (b)$.

For $\varepsilon \in (0,1)$, we put
$$
\Omega(b, \varepsilon) :=\{z\in\mathbb{C}_+: |b(z)|< \varepsilon\},
$$
and
$$
\widetilde{\Omega}(b, \varepsilon) :=\sigma(b) \cup \Omega(b, \varepsilon),
$$
where $\sigma(b)$ is the boundary spectrum of $b$.
Finally, for $x\in \mathbb{R}$, we introduce the following three distances
\begin{eqnarray*}
d_0(x) &:=& {\rm dist}\, (x, \sigma(b)), \\
d_\varepsilon(x) &:=& {\rm dist}\, (x, \Omega(b, \varepsilon)), \\
\tilde d_\varepsilon(x) &:=& {\rm dist}\, (x, \widetilde{\Omega}(b,
\varepsilon)).
\end{eqnarray*}

\noindent Note that whenever $b=\Theta$ is an inner function,
for all $x\in \sigma(\Theta)$, we have
\[
\liminf_{\substack{z \longrightarrow x\\ z\in\mathbb{C}_+}} |\Theta(z)|=0,
\]
and thus
$d_\varepsilon(t) = \tilde d_\varepsilon(t)$,
$t\in\mathbb{R}$. However, for an arbitrary function $b$ in the unit ball of
$H^\infty(\mathbb{C}_+)$, we have to distinguish between the distance functions
$d_\varepsilon$ and $\tilde d_\varepsilon$.

\begin{Lem}\label{lem:levelsets1}
There exists a positive constant $C=C(\varepsilon)$
such that, for all $x\in \mathbb{R}\setminus \sigma(b)$,
\[
|b'(x)| \le C \big( \tilde d_\varepsilon(x) \big)^{-1}.
\]
\end{Lem}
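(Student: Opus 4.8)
The plan is to bound $|b'(x)|$ termwise using the explicit formula \eqref{eq:Rderivee}, estimating each of the four pieces (the exponential type $a$, the Blaschke sum, the singular integral, and the $\log|b|$ integral) by a constant times $\tilde d_\varepsilon(x)^{-1}$. Fix $x\in\mathbb{R}\setminus\sigma(b)$ and write $\delta=\tilde d_\varepsilon(x)$. Since $\delta\le d_0(x)=\dist(x,\sigma(b))$ and $\clos\sigma(b)$ contains the supports of $\mu$ and $\log|b(t)|\,dt$ as well as the accumulation points of the zeros $\{z_r\}$, all the ``mass'' producing $|b'(x)|$ lives at distance at least $\delta$ from $x$ in the relevant sense; the subtlety is that individual Blaschke zeros $z_r$ may sit close to $x$ (only their limit points are forced into $\sigma(b)$), so one must use the level-set part $\Omega(b,\varepsilon)$ of $\widetilde\Omega(b,\varepsilon)$ to control them.

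First I would handle the outer and singular factors. For these, the relevant measures ($|\log|b(t)||\,dt$ and $d\mu(t)$) are supported on $\clos\sigma(b)$, hence at distance $\ge d_0(x)\ge\delta$ from $x$; writing $\int |x-t|^{-2}\,d\nu(t)$ and splitting into dyadic annuli $2^k\delta\le|x-t|<2^{k+1}\delta$, each annulus contributes $\lesssim (2^k\delta)^{-2}\nu(\text{annulus})$, and using that $b$ extends analytically and $|b|$ is bounded below (say by $\varepsilon/2$) on a neighborhood of $x$ of size comparable to $\delta$ one controls the total mass of $\nu$ on each annulus by $C\cdot 2^k\delta$ (this is exactly the standard fact that a boundary point away from the level set forces the defining measures to be Carleson-thin near $x$). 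Summing the geometric series gives a bound $C\delta^{-1}$. The term $a$ (exponential type): if $a>0$ then $I_\mu$, hence $b$, tends to $0$ along the positive imaginary axis, so $\sigma(b)=\mathbb{R}$ and there is nothing to prove; otherwise $a=0$.

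The main obstacle is the Blaschke sum $\sum_r 2\Ima z_r/|x-z_r|^2$, since a zero $z_r$ can be within $d_0(x)$ of $x$. Here I would use that $x\notin\sigma(b)$ together with the value $\varepsilon$: for any zero $z_r$, the point $z_r\in\Omega(b,\varepsilon)$ unless $|b(z_r)|=0\ge\varepsilon$, which is automatic — so in fact \emph{every} zero lies in $\Omega(b,\varepsilon)\subset\widetilde\Omega(b,\varepsilon)$, whence $|x-z_r|\ge\delta$ for all $r$. Now split the sum into dyadic shells $2^k\delta\le|x-z_r|<2^{k+1}\delta$; on each shell I bound $\sum\Ima z_r$ by using a Carleson-type / Blaschke-condition estimate coming again from the lower bound on $|b|$ near $x$: since $|b|\ge\varepsilon/2$ on a half-disc of radius $\sim 2^{k+1}\delta$ centered appropriately, the standard inequality $\prod|1-\overline{(z_r-x)/\overline{(z_r-x)}}|\ge$ const forces $\sum_{z_r\text{ in shell}}\Ima z_r\le C\,2^k\delta$. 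Each shell then contributes $\lesssim (2^k\delta)^{-2}\cdot 2^k\delta = (2^k\delta)^{-1}$, and summing over $k\ge 0$ yields the desired $C\delta^{-1}$. Combining the four bounds gives $|b'(x)|\le C(\varepsilon)\,\tilde d_\varepsilon(x)^{-1}$, with $C$ depending on $\varepsilon$ through the lower bound $|b|\gtrsim\varepsilon$ available off the level set. I expect the careful bookkeeping of the Blaschke-sum/Carleson estimate on each annulus to be the delicate point; everything else is dyadic summation.
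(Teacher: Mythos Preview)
Your dyadic scheme has a genuine gap: the shell bounds $\nu(\text{annulus}_k)\le C\cdot 2^k\delta$ (and the analogous $\sum_{z_r\in\text{shell}_k}\Ima z_r\le C\cdot 2^k\delta$) are simply false in general. Take the outer function with $|b(t)|=e^{-M}$ on $[1,2]$ and $|b(t)|=1$ elsewhere, and $x=0$. Then $|b'(0)|\asymp M$ and a short computation gives $\tilde d_\varepsilon(0)\asymp \log(1/\varepsilon)/M=:\delta$ for $M$ large. The interval $[1,2]$ sits in a single dyadic shell at distance $\asymp 1$, and the mass of $|\log|b||\,dt$ on that shell is $M$, not $\lesssim 1$ as your bound would require. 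The information ``$|b|\ge\varepsilon$ on a half-disc of radius $\delta$ about $x$'' controls only a Poisson average at height $\asymp\delta$, which sees mass at distance $2^k\delta$ with weight $\asymp\delta/(2^k\delta)^2$; this yields at best $\nu(\text{shell}_k)\lesssim 4^k\delta$, and then the dyadic sum $\sum_k (2^k\delta)^{-2}\cdot 4^k\delta$ diverges. Your appeal to ``$|b|\ge\varepsilon/2$ on a half-disc of radius $\sim 2^{k+1}\delta$'' is unjustified --- nothing prevents the level set from being very close to $x+2^k\delta$ even though it is at distance $\delta$ from $x$. (A minor point: the claim ``$a>0\Rightarrow\sigma(b)=\mathbb{R}$'' is also wrong; for $b(z)=e^{iaz}$ one has $\sigma(b)=\emptyset$.)

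The paper's argument sidesteps all of this by running the estimate in the opposite direction. Rather than bounding $|b'(x)|$ from the geometry, it shows that a large $|b'(x)|$ \emph{produces} a nearby point of the level set: for outer $b$ and $0<y<d_0(x)$, comparing the kernel $1/(t-x)^2$ with the Poisson kernel $y/|t-z|^2$ (which are within a factor $4$ of each other since the support of $\log|b|$ lies at distance $\ge d_0(x)>y$ from $x$) gives
\[
\log\frac{1}{|b(x+iy)|}\ \ge\ \frac{y}{4}\,|b'(x)|.
\]
Hence if $|b'(x)|>4\log(1/\varepsilon)/d_0(x)$, the point $x+i\cdot 4\log(1/\varepsilon)/|b'(x)|$ already lies in $\Omega(b,\varepsilon)$, forcing $\tilde d_\varepsilon(x)\le 4\log(1/\varepsilon)/|b'(x)|$. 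The inner factor is handled the same way (the paper cites \cite{Baranov05}). This one-step comparison replaces your dyadic decomposition and uses exactly the piece of information available, namely that the defining measures are supported at distance $\ge d_0(x)$ from $x$; no shell-by-shell Carleson control is needed or, indeed, available.
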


\begin{proof}
For the case of an inner function the inequality is proved in
\cite[Theorem 4.9]{Baranov05}. For the general case, let $b=I_bO_b$ be the inner-outer
factorization of $b$. Since $|b'(x)| = |I_b'(x)| +|O_b'(x)|$, $x\in
\mathbb{R}\setminus \sigma(b)$, we may assume, without loss of generality,
that $b$ is outer. Recall that in this case
$$
|b'(x)| = \frac{1}{\pi} \int_\mathbb{R}
\frac{\big|\log|b(t)|\big|}{|t-x|^2} \, dt.
$$

Fix $x\in \mathbb{R}\setminus \sigma(b)$ and suppose $0<y<d_0(x)$. Let
$z=x+iy$. Then
$$
\log\frac{1}{|b(z)|}=\frac{y}{\pi} \int_\mathbb{R}
\frac{\big|\log|b(t)|\big|}{|t-z|^2} \, dt =  \frac{y}{\pi}
\int_{|t-x|\ge d_0(x)} \frac{\big|\log|b(t)|\big|}{|t-z|^2} \, dt.
$$
Since $|t-z|\le |t-x|+ y\le 2|t-x|$ whenever $|t-x|\ge d_0(x)$, we have
$$
\log\frac{1}{|b(z)|} \ge \frac{y}{4\pi} \int_{|t-x|\ge d_0(x)}
\frac{\big|\log|b(t)|\big|}{|t-x|^2} \, dt = \frac{y|b'(x)|}{4}.
$$
Hence
\begin{equation} \label{E:binbp}
|b(x+iy)| \leq \exp \big( - y|b'(x)|/4 \big),
\end{equation}
provided that $0<y<d_0(x)$.

Let $C = 4\log \varepsilon^{-1}$. If $|b'(x)| \le C/|d_0(x)|$, then
the statement is valid since $\tilde d_\varepsilon(x) \leq d_0(x)$. On the other
hand, if $|b'(x)| > C/|d_0(x)|$, then we consider the point $z=
x+iC/|b'(x)|$ for which $\Ima z = C/|b'(x)|< d_0(x)$. Hence, by
(\ref{E:binbp}), we have $|b(z)| \leq \varepsilon$ which immediately
implies $\tilde d_\varepsilon(x) \leq C/|b'(x)|$.

\end{proof}

\begin{Lem}\label{lem:levelsets2}
 For each $p>1$, $n\geq 1$ and $\varepsilon\in (0,1)$, there exists $C=C(\varepsilon,p,n)>0$
such that
\begin{equation}\label{eq:minor1a}
\big(\, \tilde d_\varepsilon (x) \,\big)^n \le C \, w_{p, n} (x+iy),
\end{equation}
for all $x\in\mathbb{R}$ and $y\geq 0$.
\end{Lem}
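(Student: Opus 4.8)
The plan is to estimate the two quantities $w_{p,n}^{(1)}(x+iy) = \|(k_{x+iy}^b)^{n+1}\|_q^{-pn/(pn+1)}$ and $w_{p,n}^{(2)}(x+iy) = \|\rho^{1/q}\mathfrak{K}_{x+iy,n}^\rho\|_q^{-pn/(pn+1)}$ separately from below, since $w_{p,n}$ is their minimum. By Lemma \ref{lem:minoration} we already have $w_{p,n}(z) \geq A (\Ima z)^n (1-|b(z)|)^{-pn/(q(pn+1))}$ for $z \in \mathbb{C}_+$, so for $y > 0$ it suffices to compare $\tilde d_\varepsilon(x)$ with $y$ and with $(1-|b(x+iy)|)$ in a suitable way; the genuinely new content is the boundary case $y = 0$ (and the regime where $y$ is small compared to $\tilde d_\varepsilon(x)$), where one must bound the $L^q$-norms of the kernels directly using analyticity of $b$ off $\clos\sigma(b)$ together with the level-set geometry. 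The natural dichotomy is: (i) if $y \geq c\,\tilde d_\varepsilon(x)$ for an appropriate constant $c$, invoke Lemma \ref{lem:minoration} and the trivial bound $1 - |b(z)| \leq 1$ to get $(\tilde d_\varepsilon(x))^n \leq C y^n \leq C w_{p,n}(x+iy)$; (ii) if $y < c\,\tilde d_\varepsilon(x)$, reduce to estimating $w_{p,n}(x)$ itself (i.e. the $y=0$ case), after checking that decreasing $y$ to $0$ only helps — this monotonicity-type step needs care.

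For the core $y = 0$ estimate, fix $x \in \mathbb{R}$ with $\tilde d_\varepsilon(x) =: \delta > 0$ (if $\delta = 0$ there is nothing to prove). By definition of $\widetilde\Omega(b,\varepsilon)$, we have $|b(w)| \geq \varepsilon$ on the disc $\{|w - x| < \delta\}\cap\mathbb{C}_+$ and $x \notin \sigma(b)$, so $b$ extends analytically across $(x - \delta, x + \delta)$ and $|b| \geq \varepsilon$ there. First I would bound $\|(k_x^b)^{n+1}\|_q$: split $\int_\mathbb{R} |1 - \overline{b(x)}b(t)|^{(n+1)q}|t - x|^{-(n+1)q}\,dt$ into the local part $|t - x| < \delta/2$ and the tail $|t - x| \geq \delta/2$. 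On the tail, $|t-x|^{-(n+1)q} \leq (\delta/2)^{-(n+1)q+2}|t-x|^{-2}$ and the remaining integral is controlled by $\|k_x^b\|_b^2 = 2\pi \log(1/|b(x)|)^{\text{-type quantity}} \asymp 1 - |b(x)|^2$ — more precisely one uses the reproducing-kernel norm identity as in Lemma \ref{lem:minoration} — while $1 - |b(x)| \leq |b'(x)|\cdot(\text{something})$; here Lemma \ref{lem:levelsets1} gives $|b'(x)| \leq C/\tilde d_\varepsilon(x) = C/\delta$. On the local part, analyticity and $|b(t)| \geq \varepsilon$ let one write $1 - \overline{b(x)}b(t) = (\overline{b(x)} - \overline{b(t)})b(t) + (1 - |b(t)|^2)\cdot(\cdots)$ and estimate $|b(x) - b(t)| \leq \sup_{[x,t]}|b'|\cdot|t - x| \leq C\delta^{-1}|t-x|$ using a Cauchy-estimate/Schwarz-type bound for $b'$ on the slightly smaller disc; this yields $|(k_x^b)^{n+1}(t)| \leq C\delta^{-1}|t-x|^{-n}$ on the local part, whose $L^q$-norm over $|t-x| < \delta/2$ is $\leq C\delta^{-1}\delta^{-n+1/q} = C\delta^{-n-1+1/q} = C\delta^{-((n+1)q-1)/q}$. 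Assembling, $\|(k_x^b)^{n+1}\|_q \leq C\delta^{-(n + 1/p)} = C\delta^{-(pn+1)/p}$, hence $w_{p,n}^{(1)}(x) \geq C\delta^n$.

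The second term $w_{p,n}^{(2)}(x)$ is handled the same way: write $\mathfrak{K}_{x,n}^\rho(t) = (t - x)^{-(n+1)}\varphi(t)$ with $\varphi$ bounded (the binomial sum in $b^j(t)$), so $\|\rho^{1/q}\mathfrak{K}_{x,n}^\rho\|_q^q \leq C\int_\mathbb{R}(1 - |b(t)|^2)|t-x|^{-(n+1)q}\,dt$; split at $|t-x| = \delta/2$, use that $\rho \equiv 0$ on $(x-\delta,x+\delta)$ (since $|b| \geq \varepsilon > 0$ does NOT give $\rho = 0$ — rather one uses $1 - |b(t)|^2 \leq C$ and, on the tail, the bound $\int_{|t-x|\geq\delta/2}(1-|b(t)|)|t-x|^{-2}\,dt \leq C\delta^{-1}(1 - |b(x)|) \leq C\delta^{-2}$ exactly as in Lemma \ref{lem:minoration}, together with Lemma \ref{lem:levelsets1}), obtaining $\|\rho^{1/q}\mathfrak{K}_{x,n}^\rho\|_q \leq C\delta^{-(pn+1)/p}$ and thus $w_{p,n}^{(2)}(x) \geq C\delta^n$. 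Taking the minimum gives $w_{p,n}(x) \geq C(\tilde d_\varepsilon(x))^n$, and combining with case (i) and the monotonicity reduction of case (ii) completes the proof. The main obstacle I anticipate is the local-part estimate of $b'$ near $x$: one needs a clean quantitative bound $\sup_{|w-x|<\delta/4}|b'(w)| \leq C/\delta$ valid up to the boundary, which should follow from Lemma \ref{lem:derivee} (pushing into $\mathbb{C}_+$) combined with Lemma \ref{lem:levelsets1}, but the interplay between the half-disc geometry, the constant $\varepsilon$, and the passage $y \to 0$ will require the most bookkeeping.
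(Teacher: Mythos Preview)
Your proposal has two genuine gaps, both stemming from a missed observation about $\sigma(b)$.

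\textbf{First gap: the $\rho$-kernel.} Your initial claim that $\rho\equiv 0$ on $(x-\delta,x+\delta)$ was \emph{correct}, and your retraction is the error. If $|t-x|<\tilde d_\varepsilon(x)$ then $t\notin\sigma(b)$; by definition of $\sigma(b)$ this means $\liminf_{z\to t}|b(z)|\ge 1$, hence $|b(t)|=1$ and $\rho(t)=0$. (The condition $t\notin\Omega(b,\varepsilon)$ is not what is being used here; it is $t\notin\sigma(b)$.) Your proposed fix, bounding $1-|b(t)|^2$ by a constant on the local part, fails outright: the integral $\int_{|t-x|<\delta/2}|t-x|^{-(n+1)q}\,dt$ diverges. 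With the correct observation, the local part of $\|\rho^{1/q}\mathfrak{K}_{z,n}^\rho\|_q^q$ simply vanishes, and only the trivial tail estimate remains --- this is exactly what the paper does.

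\textbf{Second gap: the monotonicity reduction.} Your dichotomy forces you, in case (ii), to compare $w_{p,n}(x+iy)$ with $w_{p,n}(x)$, i.e.\ to control the kernel norms along vertical rays. This monotonicity is precisely the open Question~\ref{quest-2} of the paper; you cannot invoke it. The paper sidesteps the issue entirely by estimating at $z=x+iy$ directly for \emph{all} $y\ge 0$: on the local part $|t-x|\le\tilde d_\varepsilon(x)/2$ one again uses $|b(t)|=1$ to write
\[
\bigg|\frac{1-\overline{b(z)}b(t)}{t-\overline z}\bigg|=\bigg|\frac{b(t)-b(z)}{t-z}\bigg|\le \max_{u\in[t,z]}|b'(u)|\le \max_{u\in[t,z]}|b'(\Rea u)|\le C\,(\tilde d_\varepsilon(x))^{-1},
\]
where the second inequality is Lemma~\ref{Lem:derivee} and the last is Lemma~\ref{lem:levelsets1} (note $|\Rea u-x|\le\tilde d_\varepsilon(x)/2$). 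The tail $|t-x|>\tilde d_\varepsilon(x)/2$ needs nothing more than $|1-\overline{b(z)}b(t)|\le 2$. No case split on $y$, no Lemma~\ref{lem:minoration}, no monotonicity.

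In short: keep your first instinct about $\rho$, drop the dichotomy, and run the mean-value argument at $z=x+iy$ rather than at $x$.
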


\begin{proof}
Let $z=x+iy$, $y\geq 0$.
Assume that $x \in \mathbb{R}\setminus \sigma(b)$
(otherwise $\tilde d_\varepsilon(x)=0$ and (\ref{eq:minor1a}) is trivial). Since
$-(n+1)q+1 = -q\frac{np+1}{p}$, the
estimate (\ref{eq:minor1a})
is equivalent to
\begin{eqnarray}\label{eq:minor2}
\int_\mathbb{R} \bigg|\frac{1-\overline{b(z)}b(t)}
{t-\overline z} \bigg|^{(n+1)q}dt \le C (\tilde d_\varepsilon(x))^{-(n+1)q+1},
\end{eqnarray}
and
\begin{eqnarray}\label{eq:minor3}
\int_\mathbb{R} \left|
\frac{\overline{b(z)}\sum_{j=0}^n\binom{n+1}{j+1}(-1)^j\overline{b(z)}^jb^j(t)}{(t-\overline
z)^{n+1}}\right|^q \rho(t)\,dt \le
C (\tilde d_\varepsilon(x))^{-(n+1)q+1}.
\end{eqnarray}
Inequality (\ref{eq:minor3}) is obvious, since
$\rho(t)=0$ if $|t-x|< \tilde d_\varepsilon(x)$. To prove
(\ref{eq:minor2}), we estimate separately the integrals over
$\{t:\,|t-x|\le \tilde d_\varepsilon(x)/2 \}$ and
$\{t:\,|t-x|> \tilde d_\varepsilon(x)/2 \}$. Obviously,
$$
\int_{|t-x|> \tilde d_\varepsilon(x)/2}  \bigg|\frac{1-\overline{b(z)}b(t)}
{t-\overline z} \bigg|^{(n+1)q}dt   \le C (\tilde d_\varepsilon(x))^{-(n+1)q+1}.
$$
Since $|b(t)|=1$ if $|t-x|\le \tilde d_\varepsilon(x)/2$,
for the second integral we have
\begin{align*}
\int_{|t-x|\le \tilde d_\varepsilon(x)/2}
\bigg|\frac{1-\overline{b(z)}b(t)}{t-\overline z} \bigg|^{(n+1)q}dt
= \,\, &
\int_{|t-x|\le \tilde d_\varepsilon(x)/2}
\bigg|\frac{b(t)- b(z)}{t- z} \bigg|^{(n+1)q}dt \\
\le \,\, &
\tilde d_\varepsilon(x) \max |b'(u)|^{(n+1)q},
\end{align*}
where the maximum is taken over $u\in [t,z]$
with $|t-x|\le \tilde d_\varepsilon(x)/2$
(by $[t,z]$ we denote the straight line segment
with the endpoints $t$ and $z$).
Note that for such $u$ we have $|{\rm Re}\, u-x| \le \tilde d_\varepsilon(x)/2$.
By Lemma 5.2, $|b'(u)| \le |b'({\rm Re}\, u)|$, and hence,
$$
\int_{|t-x|\le \tilde d_\varepsilon(x)/2}
\bigg|\frac{1-\overline{b(z)}b(t)}{t-\overline z} \bigg|^{(n+1)q}dt
\le
\tilde d_\varepsilon(x) \max_{|t-x|\le \tilde d_\varepsilon(x)/2} |b'(t)|^{(n+1)q}.
$$
According to Lemma 5.1, $|b'(t)|\le C_1 (\tilde d_\varepsilon(t))^{-1} \le
C_2(\tilde d_\varepsilon(x))^{-1}$ whenever $|t-x|<\tilde d_\varepsilon(x)/2$
which leads to the required estimate.

\end{proof}

\begin{Cor}\label{lem:levelsets3}
For each $\varepsilon \in (0,1)$ and $n\in \mathbb{N}$, there exists $C=C(\varepsilon,n)$
such that
$$
\|f^{(n)}\tilde d_\varepsilon^n\|_2\le C \|f\|_b, \qquad f \in \mathcal{H}(b).
$$
\end{Cor}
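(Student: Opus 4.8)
The plan is to obtain this as an immediate corollary of Theorem \ref{maintheorem} and Lemma \ref{lem:levelsets2}. First I would fix once and for all an exponent $p$ with $1<p<2$, say $p=3/2$, and take $\mu$ to be Lebesgue measure on $\mathbb{R}$, viewed as a Borel measure on $\overline{\mathbb{C}_+}$ concentrated on the real axis. This $\mu$ is trivially a Carleson measure, since $\mu(S(x,h))=h$ for every square $S(x,h)=[x,x+h]\times[0,h]$, so that $C_\mu=1$. Applying Theorem \ref{maintheorem} in the range $1<p<2$ then produces a constant $C=C(p,n)>0$ such that
\[
\|f^{(n)}w_{p,n}\|_{L^2(\mathbb{R})}\le C\|f\|_b,\qquad f\in\mathcal{H}(b).
\]

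Next I would feed in the pointwise comparison from Lemma \ref{lem:levelsets2} with $y=0$: there is a constant $C'=C'(\varepsilon,p,n)>0$ with $\big(\tilde d_\varepsilon(x)\big)^n\le C'\,w_{p,n}(x)$ for every $x\in\mathbb{R}$. At points $x$ where $w_{p,n}(x)=0$ this forces $\tilde d_\varepsilon(x)=0$ as well, which is consistent with the convention that both $(f^{(n)}w_{p,n})(x)$ and $(f^{(n)}\tilde d_\varepsilon^{\,n})(x)$ are set to $0$ there, so no measurability or integrability issue arises ($\tilde d_\varepsilon$ being continuous and $f^{(n)}$ being defined a.e.\ on the set where $w_{p,n}>0$). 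Multiplying by $|f^{(n)}|$ and integrating, the two displays combine to give
\[
\|f^{(n)}\tilde d_\varepsilon^{\,n}\|_2\le C'\,\|f^{(n)}w_{p,n}\|_{L^2(\mathbb{R})}\le C\,C'\,\|f\|_b,
\]
which is the asserted inequality with constant $CC'$; having fixed $p=3/2$, this constant depends only on $\varepsilon$ and $n$.

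The only point needing a little care — and really the only "obstacle" — is the choice of $p$: for $p=2$ Theorem \ref{maintheorem} yields merely a weak type $(2,2)$ estimate, which would not deliver a genuine $L^2$ bound, so one must work with some $p\in(1,2)$ and afterwards suppress the harmless $p$-dependence by fixing its value. Beyond this bookkeeping there is no real difficulty, as the substantive analytic content has already been absorbed into Theorem \ref{maintheorem} and Lemma \ref{lem:levelsets2}.
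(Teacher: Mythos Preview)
Your proposal is correct and follows exactly the route the paper intends: the paper's own proof consists of the single sentence ``The statement follows immediately from Lemma \ref{lem:levelsets2} and Theorem \ref{maintheorem},'' and you have simply spelled out the details, including the necessary observation that one must take $p\in(1,2)$ to obtain a genuine $L^2$ bound rather than the weak-type estimate at $p=2$.
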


\begin{proof}
The statement follows immediately from Lemma
\ref{lem:levelsets2} and Theorem
\ref{maintheorem}.

\end{proof}

We conclude this section with a
a corollary of our Bernstein inequalities, concerning
the regularity on the boundary
for functions in $\mathcal{H}(b)$.
This technical result will be used later.

\begin{Cor}\label{cor:continuite-H(b)}
Let $I=[x_0,x_0+y_0]$ be a bounded interval on $\mathbb{R}$, $1< p<2$.
Assume that
\begin{eqnarray}\label{eq:integrale-spectre}
\int_I w_p(x)^{-2}dx<+\infty.
\end{eqnarray}
Then we have
\begin{enumerate}
\item[$\mathrm{a)}$] $]x_0,x_0+y_0[ \, \cap \, \sigma(b)=\emptyset$. In
particular, each function $f$ in $\mathcal{H}(b)$ is differentiable
on $]x_0,x_0+y_0[$.

\item[$\mathrm{b)}$]  $b$ is continuous on
the Carleson square $S(I)=[x_0,x_0+y_0]\times[0,y_0]$.
\end{enumerate}
\end{Cor}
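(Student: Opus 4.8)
The plan is to feed the hypothesis into the Bernstein inequality of Theorem~\ref{maintheorem} and then to test it against reproducing kernels in order to extract information about $b$ itself. First I would take $\mu=\mathbf 1_I\,dt$, which is clearly a Carleson measure ($C_\mu\le1$); since $1<p<2$, Theorem~\ref{maintheorem} gives $\|f'w_p\|_{L^2(I)}\le C\|f\|_b$ for all $f\in\mathcal H(b)$. Combining this with the hypothesis (\ref{eq:integrale-spectre}) and the Cauchy--Schwarz inequality yields
\[
\int_I|f'(x)|\,dx\le\Big(\int_I|f'w_p|^2\,dx\Big)^{1/2}\Big(\int_I w_p^{-2}\,dx\Big)^{1/2}\le C'\|f\|_b,\qquad f\in\mathcal H(b).
\]
Note that (\ref{eq:integrale-spectre}) forces $w_p>0$ a.e. on $I$, hence $S_{4}<\infty$ a.e. on $I$ by the remark following Question~\ref{quest-0}, so $b'(x)$ and $f'(x)$ are defined a.e. on $I$ and all these integrals make sense.

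Next I would apply this to the reproducing kernel $f=k_{z_0}^b$ from (\ref{eq:noyau-reproduisant}) for a fixed $z_0\in\mathbb C_+$ with $b(z_0)\ne0$ (if $b\equiv0$ the hypothesis can never hold, so the statement is vacuous). Differentiating, a.e.\ on $I$,
\[
(k_{z_0}^b)'(x)=-\,\frac{\overline{b(z_0)}\,b'(x)}{x-\overline{z_0}}-\frac{1-\overline{b(z_0)}b(x)}{(x-\overline{z_0})^2},
\]
and the second term is bounded on $I$ by $2(\Ima z_0)^{-2}$; since $\int_I|(k_{z_0}^b)'|\,dx<\infty$ by the previous step, it follows that $b'\in L^1(I)$. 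Now suppose, contrary to part $\mathrm a)$, that $x_*\in\,]x_0,x_0+y_0[\,\cap\,\sigma(b)$. Then at least one of the three mechanisms producing membership in $\sigma(b)$ is present in every neighbourhood of $x_*$: the zeros $z_r$ of the Blaschke factor accumulate at $x_*$; or $\mu\big((x_*-\delta,x_*+\delta)\big)>0$ for all $\delta>0$; or $|b|<1$ on a set of positive measure in every neighbourhood of $x_*$. In each case the corresponding term of the formula (\ref{eq:Rderivee}) for $|b'|$, integrated over a small interval $(x_*-\delta,x_*+\delta)\subset\,]x_0,x_0+y_0[\,$, diverges: by Tonelli, $\int_{(x_*-\delta,x_*+\delta)}(x-c)^{-2}\,dx=+\infty$ for $c$ in this interval, and an elementary substitution shows that infinitely many of the $z_r$ each contribute at least $\pi$ to the Blaschke sum. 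Hence $b'\notin L^1(I)$, a contradiction. Therefore $\sigma(b)\cap\,]x_0,x_0+y_0[\,=\emptyset$, whence $\clos\sigma(b)\cap\,]x_0,x_0+y_0[\,=\emptyset$, so $b$ and every $f\in\mathcal H(b)$ extend analytically through $]x_0,x_0+y_0[\,$; in particular each such $f$ is differentiable there.

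For part $\mathrm b)$ I would first use $b'\in L^1(I)$ together with the analyticity of $b$ on $]x_0,x_0+y_0[$ and the fundamental theorem of calculus: $b(x)-b(x')=\int_{x'}^{x}b'(t)\,dt$ on the open interval, and $\int_{x_0}^{x}|b'|\to0$ as $x\to x_0^+$, so $b$ extends continuously to $[x_0,x_0+y_0]$. To reach the whole closed square I would exploit Lemma~\ref{Lem:derivee}: for $y>0$ and $x<x'$, integrating $b'$ along a horizontal segment gives
\[
|b(x'+iy)-b(x+iy)|\le\int_{x}^{x'}|b'(t+iy)|\,dt\le\int_{x}^{x'}|b'(t)|\,dt,
\]
an estimate uniform in $y$; the analogous comparison of two vertical lines, again via Lemma~\ref{Lem:derivee}, shows that $\lim_{y\to0^+}b(x+iy)$ exists and equals $b(x)$ for every $x\in[x_0,x_0+y_0]$. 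Combining these, if $z_n=x_n+iy_n\to(x_0,0)$ within $S(I)$ then
\[
|b(z_n)-b(x_0)|\le\int_{x_0}^{x_n}|b'(t)|\,dt+|b(x_0+iy_n)-b(x_0)|\longrightarrow0,
\]
and likewise at the corner $(x_0+y_0,0)$; together with continuity of $b$ on the interior of $S(I)$ and on the open bottom edge (from part $\mathrm a)$), this gives continuity of $b$ on $S(I)$.

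The step I expect to be the crux is the one inside part $\mathrm a)$: converting the qualitative fact $x_*\in\sigma(b)$ into the quantitative non-integrability of $|b'|$ near $x_*$. This requires the explicit description of $|b'|$ in (\ref{eq:Rderivee}) and a separate treatment of the Blaschke, singular and outer contributions, and it is essential that the hypothesis is strong enough to be transferred—via Theorem~\ref{maintheorem} and Cauchy--Schwarz—into genuine $L^1$ control of $b'$ itself. The second delicate point is the boundary analysis in part $\mathrm b)$: the estimates must be uniform in the height $y$ in order to reach the corners, and Lemma~\ref{Lem:derivee} is precisely the device that makes this uniformity available.
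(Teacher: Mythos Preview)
Your argument is essentially the same as the paper's: apply Theorem~\ref{maintheorem} with $\mu$ the Lebesgue measure on $I$, use Cauchy--Schwarz together with the hypothesis to get $f'\in L^1(I)$ for all $f\in\mathcal H(b)$, test on a reproducing kernel $k_{z_0}^b$ to deduce $b'\in L^1(I)$, and then read off from the explicit formula (\ref{eq:Rderivee}) that no spectrum point can lie in the open interval. Part $\mathrm a)$ matches the paper exactly.

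In part $\mathrm b)$ you use the same ingredients ($b'\in L^1(I)$ and Lemma~\ref{Lem:derivee}) but the path you choose differs slightly, and one step is not justified as written. Your claim that ``the analogous comparison of two vertical lines, again via Lemma~\ref{Lem:derivee}, shows that $\lim_{y\to0^+}b(x+iy)=b(x)$ for every $x\in[x_0,x_0+y_0]$'' is fine at interior points (analyticity) but not at $x_0$: Lemma~\ref{Lem:derivee} only gives $|b'(x_0+iy)|\le|b'(x_0)|$, and $|b'(x_0)|$ may well be infinite (you only know $b'\in L^1(I)$). So the vertical leg through $x_0$ cannot be estimated this way, and your final displayed inequality still contains the unproved term $|b(x_0+iy_n)-b(x_0)|$. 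The paper avoids this by integrating along the \emph{diagonal} segment from $x+iy$ to $x+y$ (which lies in the region of analyticity once $x\ge x_0$, $y>0$), obtaining via Lemma~\ref{Lem:derivee}
\[
|b(x+iy)-b(x+y)|\le\sqrt2\int_x^{x+y}|b'(u)|\,du\longrightarrow0,
\]
and combining with $b(x+y)\to b(x_0)$ along the real axis. Your horizontal-then-vertical decomposition can also be salvaged by an $\varepsilon/3$ detour through a fixed interior point $x_1$ close to $x_0$ (horizontal from $x_0+iy$ to $x_1+iy$, then vertical to $x_1$, then along $\mathbb R$ to $x_0$), but this is precisely what was missing from your sketch.
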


\begin{proof} $\mathrm{a)}$ According to Theorem \ref{maintheorem}, there is a
constant $C>0$ such that
\[
\int_\mathbb{R} |f'(x)w_p(x)|^2\,dx\leq C\|f\|_b^2,\qquad f\in\mathcal{H}(b).
\]
Then, using (\ref{eq:integrale-spectre}) and the Cauchy--Schwartz
inequality, we get  $f'\in L^1(I)$ for any $f\in\mathcal{H}(b)$. Now choose
$z\in\mathbb{C}_+$ such that $b(z)\not=0$ and take $f=k_z^b$. We have
\[
f'(x)=-\overline{b(z)}\frac{b'(x)}{x-\overline
z}-\frac{k_z^b(x)}{x-\overline z}
\] and, since $k_z^b\in L^1(I)$, we conclude that
\begin{eqnarray}\label{eq:integrale-spectre2}
\int_{x_0}^{x_0+y_0}|b'(x)|\,dx<+\infty.
\end{eqnarray}
Now it follows immediately from the formula (\ref{eq:Rderivee}) for
$|b'(x)|$ that (\ref{eq:integrale-spectre2})
implies $]x_0,x_0+y_0[ \, \cap \, \sigma(b) = \emptyset$.
As a matter of  fact, this is obvious for the outer and the singular inner factors
since $\int_I (x-t)^{-2} dt =\infty$ for any $x\in I$; and if $b$ is
a Blaschke product with zeros $z_r$ tending to $x\in ]x_0,x_0+y_0[$,
then, for sufficiently large $r$,
$$
\int_{x_0}^{x_0+y_0} \frac {2 \,  \Ima z_r}{|x-z_r|^{2}}dx  \ge \pi,
$$
and so the integral in (\ref{eq:integrale-spectre2}) diverges.

$\mathrm{b)}$
By statement $\mathrm{a)}$,
$b$ is continuous on $S(I)$ except possibly at the points $x_0$
and $x_0+y_0$. It remains to show that $b$ is continuous at $x_0$ and
$x_0+y_0$. Fix $x_1\in ]x_0,x_0+y_0[$ and define
\[
b(x_0):=b(x_1)-\int_{x_0}^{x_1}b'(x)\,dx.
\]
(Note that this definition of $b(x_0)$ does not seem to correspond to the
classical one with non-tangential limits but, in fact, as we will see at the
end, they coincide). Since $b$ is differentiable on $]x_0,x_0+y_0[$, this
definition does
not depend on the choice of $x_1$ and we see from
(\ref{eq:integrale-spectre2}) that $b(x)$ tends to
$b(x_0)$ as $x\to x_0$ along $I$. Now let $z=x+iy\in S(I)$, with
$x\in [x_0,x_0+y_0/2[$, $y\in ]0,y_0/2[$. Write
$b(z)-b(x_0)=b(x+iy)-b(x+y)+b(x+y)-b(x_0)$. Using the continuity of
$b$ at $x_0$ along $I$, we have $b(x+y)-b(x_0)\to 0$, as $x\to x_0$
and $y\to 0$. Moreover, since $b$ is analytic on $\mathbb{C}_+ \cup \,
]x_0,x_0+y_0[$, we can write
\[
b(x+y)-b(x+iy)=(1-i)y\int_0^1 b'(t(x+y)+(1-t)(x+iy))\,dt.
\]
Applying Lemma \ref{Lem:derivee}, we get
\[
|b(x+y)-b(x+iy)|\leq\sqrt{2} \int_x^{x+y}|b'(u)|\,du.
\]
According to (\ref{eq:integrale-spectre2}), we
deduce that $b(x+y)-b(x+iy)\to 0$, as $x\to x_0$ and $y\to 0$.
Therefore, $b(z)\to b(x_0)$, as $z\to x_0$, $z\in S(I)$.
\end{proof}

\section{Carleson-type embedding theorems}
Weighted Bernstein-type inequalities of the
form (\ref{eq:intro}) turned out to be an
efficient tool for the study of the so-called
Carleson-type embedding theorems for the shift-coinvariant
subspaces $K_\Theta^p$. More precisely, given an inner
function $\Theta$, we want to describe the class of Borel
measure $\mu$ in the closed upper half-plane $\overline{\mathbb{C}_+}$
such that the embedding $K_\Theta^p\subset L^p(\mu)$ takes place. In other words, we are interested in the class of Borel
measure $\mu$ in $\overline{\mathbb{C}_+}$ such that there is a constant $C$ satisfying
\[
\|f\|_{L^p(\mu)}\leq C \|f\|_p,
\]
for all $f\in K_\Theta^p$. This problem was posed by Cohn
in \cite{Cohn82}. In spite of a number of beautiful results
(see, e.g., \cite{Cohn82, Cohn86, NV, VolbergTreil}),
the question still remains open in the general case.
Compactness of the embedding operator is also of interest
and is considered in \cite{CiMat03,Cohn-JOT86,Vo81}.

Methods based on the Bernstein-type inequalities allow to
give unified proofs and essentially generalize almost all known
results concerning these problems (see \cite{Baranov05, Baranov06}).
Here we obtain an embedding theorem for de Branges--Rovnyak spaces.
In the case of an inner function the first statement
coincides with a well-known theorem due to Volberg
and Treil \cite{VolbergTreil}.

A Carleson
 measure for the closed upper half-plane is called a {\it vanishing Carleson
 measure} if $\mu(S(x,h))/h \to 0$ whenever $h \to 0$
 or $\mbox{dist}\, (S(x,h), 0)\to \infty$.
 Vanishing Carleson measures in the closed unit disc are discussed, e.g.,
 in \cite{Power}. An equivalent definition for a vanishing Carleson measure $\nu$
 in the disc is that
\[
\int_{\overline{\mathbb{D}}} \frac{1-|z|^2}{|1-\overline z \zeta|^2}d\nu(\zeta)
\longrightarrow 0,\qquad \mbox{ as }\ |z|\to 1.
\]
 Changing the variables to the
 upper half-plane with $|w+i|^{-2}d\mu(w) = d\nu(\zeta)$, we obtain
\[
\int_{\overline{\mathbb{C}_+}} \frac{\Ima z}{|w-\overline z|^2} d\mu(w) \longrightarrow 0,
\]
 whenever either $\Ima z\to 0$ or $|z|\to+\infty$. It is easily seen
 that this condition is equivalent to the above definition of a
 vanishing Carleson measure. It is well known that an embedding $H^p(\mathbb{C}_+)\subset
 L^p(\mu)$ is compact if and only if $\mu$ is a vanishing Carleson
 measure.

\begin{Thm}\label{embeddings1}
Let $\mu$ be a Borel measure in $\overline{\mathbb{C}_+}$,
and let $\varepsilon\in (0,1)$.
\begin{enumerate}
\item[$\mathrm{(a)}$] Assume that $\mu(S(x,h)) \le Kh$ for all Carleson
squares
$S(x,h)$ satisfying
\[
S(x,h) \cap \widetilde{\Omega}(b, \varepsilon)
\ne\emptyset.
\]
Then
$\mathcal{H}(b)\subset L^2(\mu)$, that is, there is a constant $C>0$ such that
\[
\|f\|_{L^2(\mu)} \le C\|f\|_{b}, \qquad f\in \mathcal{H}(b).
\]
\item[$\mathrm{(b)}$] Assume that $\mu$ is a
vanishing Carleson measure for $\mathcal{H}(b)$, that is,
$\mu(S(x,h))/h\to 0$ whenever
$S(x,h)\cap \widetilde{\Omega}(b,\varepsilon)\ne\emptyset$
and $h\to 0$ or $\hbox{\rm{dist}}(S(x,h),0)\to +\infty$.
Then the embedding $\mathcal{H}(b)\subset L^2(\mu)$ is compact.
\end{enumerate}
\end{Thm}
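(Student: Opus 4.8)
The plan is to reduce the embedding $\mathcal{H}(b)\subset L^2(\mu)$ to the weighted Bernstein inequality of Theorem \ref{maintheorem}, applied with $p$ strictly between $1$ and $2$, using the distance estimate of Lemma \ref{lem:levelsets2} to control the weight $w_{p,1}$ from below by a power of $\tilde d_\varepsilon$. The key geometric observation is that the hypothesis in (a) says precisely that, after throwing away the ``bad'' region $\widetilde\Omega(b,\varepsilon)$, the measure $\mu$ restricted to its complement is genuinely Carleson. So I would split $\overline{\mathbb{C}_+}$ into a neighborhood $G$ of $\widetilde\Omega(b,\varepsilon)$ (say the union of all Carleson squares meeting $\widetilde\Omega(b,\varepsilon)$, or equivalently a region where $\tilde d_\varepsilon$ is small compared to $\Ima z$) and its complement. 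On the complement $\mu$ is Carleson, so there $\|f\|_{L^2(\mu)}\le C\|f\|_2\le C\|f\|_b$ by the classical Carleson theorem; the real work is the piece of $\mu$ living on $G$.

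On $G$ the idea is to use a Taylor-type bound: since each $f\in\mathcal{H}(b)$ extends analytically across $\mathbb{R}\setminus\clos\sigma(b)$ and $\sigma(b)\subset\widetilde\Omega(b,\varepsilon)$, one can estimate $|f(z)|$ at a point $z$ near $\widetilde\Omega(b,\varepsilon)$ in terms of the value of $f$ at a nearby ``reference'' point at distance $\asymp\tilde d_\varepsilon$ plus an integral of $|f'|$ along a short path, the path having length $\asymp\tilde d_\varepsilon$. This is exactly the device Baranov uses for the inner case: one covers $G$ by a Whitney-type decomposition adapted to $\tilde d_\varepsilon$, and on each Whitney cube $|f|^2$ is dominated by an average of $|f|^2$ over a fixed dilate sitting at ``hyperbolic distance one'' from $\widetilde\Omega(b,\varepsilon)$ together with $\tilde d_\varepsilon\cdot\int |f'|^2\,\tilde d_\varepsilon$ over that dilate. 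Summing over the decomposition and using that $\mu$ is Carleson on each such dilate (these dilates lie in the region where $\tilde d_\varepsilon\asymp\Ima z$, hence are covered by the hypothesis $\mu(S(x,h))\le Kh$), one gets
\[
\int_G |f|^2\,d\mu \le C\Big(\|f\|_2^2 + \|f^{(1)}\tilde d_\varepsilon\|_2^2\Big)\le C\|f\|_b^2,
\]
where the last inequality is Corollary \ref{lem:levelsets3} with $n=1$. Combining the two pieces gives part (a).

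For part (b) the same decomposition works, but now one must show the embedding operator is compact. I would approximate $\mu$ by its restrictions $\mu_N$ to the compact sets $\{|z|\le N,\ \Ima z\ge 1/N\}$; on each such set the embedding is compact because $\mu_N$ is finite and supported away from $\mathbb{R}$ (point evaluations on $\mathcal{H}(b)$ are bounded and the kernels vary continuously there, so $\mathcal{H}(b)\hookrightarrow L^2(\mu_N)$ factors through a trace class-type inclusion; alternatively $f\mapsto f|_{\{|z|\le N,\ \Ima z\ge 1/N\}}$ is compact from $\mathcal{H}(b)$ into $C(\cdot)$). The remainder $\mu-\mu_N$ is supported where $h\to 0$ or $\dist(S(x,h),0)\to\infty$ along the relevant squares, so the vanishing hypothesis makes the ``Carleson constant'' of $\mu-\mu_N$ on the relevant squares tend to $0$; feeding this small constant through the estimate of part (a) (all constants there depend only on $p,n$ and the Carleson constant, by the Remark following Theorem \ref{maintheorem}) shows $\|f\|_{L^2(\mu-\mu_N)}\le \varepsilon_N\|f\|_b$ with $\varepsilon_N\to 0$. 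Hence the embedding is a norm-limit of compact operators, thus compact.

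The main obstacle I expect is making the Whitney decomposition and the ``pointwise value versus nearby value plus integral of the derivative'' step rigorous in the non-inner setting — specifically, verifying that the reference points can be chosen so that their fixed dilates lie in the region controlled by the Carleson hypothesis (i.e.\ that $\tilde d_\varepsilon(z)\asymp\Ima z$ there, so that $S(x,h)\cap\widetilde\Omega(b,\varepsilon)\ne\emptyset$), and that the analytic continuation of $f$ across $\mathbb{R}\setminus\clos\sigma(b)$ is quantitatively good enough to run the Taylor estimate uniformly. The distinction between $d_\varepsilon$ and $\tilde d_\varepsilon$ stressed in Section 5 is exactly what makes this delicate: one must use $\tilde d_\varepsilon$ (which also sees $\sigma(b)$) rather than $d_\varepsilon$, since $f$ need not continue across $\sigma(b)$ even where $|b|$ stays close to $1$.
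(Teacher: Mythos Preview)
Your geometric picture is inverted, and this breaks both parts of the argument. The hypothesis $\mu(S(x,h))\le Kh$ is assumed only for squares that \emph{meet} $\widetilde\Omega(b,\varepsilon)$; it says nothing about squares far from $\widetilde\Omega(b,\varepsilon)$. Thus the measure $\mu$ is controlled \emph{near} $\widetilde\Omega(b,\varepsilon)$ and may be arbitrarily large away from it --- that is precisely the content of the theorem (cf.\ the remark following its statement). Your claim ``on the complement [of a neighborhood $G$ of $\widetilde\Omega$] $\mu$ is Carleson'' is therefore false, and the Taylor/Bernstein device cannot be run on $G$ either: $G$ contains $\sigma(b)$, across which elements of $\mathcal{H}(b)$ do \emph{not} extend analytically, and $\tilde d_\varepsilon$ is small there, so the weight in Corollary~\ref{lem:levelsets3} contributes nothing.

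The paper's proof uses exactly the opposite splitting. One takes $E=\mathbb{R}\setminus\clos\widetilde\Omega(b,\varepsilon)$, partitions $E$ into intervals $I_k$ with $\int_{I_k}\tilde d_\varepsilon^{-1}=1/2$ (so $\tilde d_\varepsilon\asymp|I_k|$ on $I_k$), sets $F=\bigcup_k S(I_k)$ and $G=\overline{\mathbb{C}_+}\setminus F$. Then $\mu|_G$ is a genuine Carleson measure, because any Carleson square $S(I)$ meeting $G$ satisfies $S(6I)\cap\widetilde\Omega(b,\varepsilon)\ne\emptyset$, and the hypothesis applies. The part $\mu|_F$ --- supported where $\tilde d_\varepsilon$ is large and functions are regular --- is handled by Theorem~\ref{embeddings2}: condition~(\ref{cond2}) follows from Lemma~\ref{lem:levelsets2} and $\tilde d_\varepsilon\asymp|I_k|$, while $\mu(S_k)\le C|I_k|$ follows because $S(6I_k)$ meets $\widetilde\Omega(b,\varepsilon)$. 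Your Whitney/Taylor idea is the right mechanism, but it belongs on $F$, not on $G$; the ``reference'' estimates against an $L^2$-average and $\|f'\tilde d_\varepsilon\|_2$ are essentially the content of the proof of Theorem~\ref{embeddings2}.

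The same inversion undermines your argument for (b). Restricting to $\{|z|\le N,\ \Ima z\ge 1/N\}$ does not leave a remainder whose ``relevant Carleson constant'' is small: the vanishing hypothesis only concerns squares meeting $\widetilde\Omega(b,\varepsilon)$, and $\mu-\mu_N$ can still be huge on the region far from $\widetilde\Omega(b,\varepsilon)$. In the paper one shows instead that $\mu|_G$ is a \emph{vanishing} Carleson measure (so that embedding is compact) and that $\mu|_F$ satisfies the hypotheses of Theorem~\ref{embeddings2}\,(b), which yields compactness via approximation by finite-rank operators on each $S_k$.
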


In Theorem \ref{embeddings1} we need to verify the
Carleson condition only on a {\em special} subclass of squares.
Geometrically this means that when we are far from the spectrum
$\sigma(b)$, the measure $\mu$ in Theorem \ref{embeddings1}
can be essentially larger than
standard Carleson measures. The reason is that functions in $\mathcal{H}(b)$
have much more regularity at the points $x\in \mathbb{R}\setminus
\clos \sigma(b)$ where $|b(x)|=1$.
On the other hand, if $|b(x)|\le \delta <1$, almost everywhere on some interval
$I\subset \mathbb{R}$, then the functions in $\mathcal{H}(b)$ behave on $I$
essentially the same as a general element of $H^2(\mathbb{C}_+)$ on that interval,
and for any Carleson measure for $\mathcal{H}(b)$ its restriction
to the square $S(I)$ is a standard Carleson measure.

We will see that, for a class of functions $b$, the sufficient
condition of Theorem \ref{embeddings1} is also necessary. However,
it may be far from being necessary for certain functions $b$
even in the model space setting.

By {\it a closed square} in $\overline{\mathbb{C}}_+$,
we mean a set of the form \begin{equation}\label{def:square}
S(x_0,y_0,h):=\{x+iy:x_0\leq x\leq x_0+h,\,y_0\leq y\leq y_0+h\},
\end{equation}
where $x_0\in\mathbb{R}$, $y_0\geq 0$ and $h>0$;
by the {\it lower side} of the closed square $S(x_0,y_0,h)$ we mean the interval
$\{x+iy_0:x_0\leq x\leq x_0+h\}$.

We deduce Theorem \ref{embeddings1} from the following
more general result. Recall that
\[
w_p(z)=w_{p,1}(z)=
\min(\|(k_z^b)^2\|_q^{-p/(p+1)},
\|\rho^{1/q}\mathfrak{K}_{z,1}^\rho\|_q^{-p/(p+1)}).
\]

\begin{Thm}\label{embeddings2}
Let $\{S_k\}_{k\ge 1}$ be a sequence of closed squares in
$\overline{\mathbb{C_+}}$, let $I_k$ denote the lower side of the square
$S_k$, and let $\delta_{I_k}$
be the Lebesgue measure on $I_k$. Assume that the squares $S_k$
satisfy the following two conditions:
\begin{equation}
\label{cond1}
\sum\limits_k \delta_{I_k}\in \mathcal{C},
\end{equation}
and, for some $p$, $1<p<2$,
\begin{equation}
\label{cond2} \sup\limits_{k\ge 1,\,y\geq 0} |I_k| \, \int_{S_k\cap\{\Ima
z=y\}}w_p^{-2}(u)|du|<\infty.
\end{equation}
Let $\mu$ be a Borel
measure with $supp\,\mu \subset \bigcup\limits_k S_k$. Then
\begin{enumerate}
\item[$\mathrm{(a)}$] if $\mu(S_k)\le C|I_k|$, then $\mathcal{H}(b) \subset
L^2(\mu)$.
\item[$\mathrm{(b)}$] if, moreover,
$I_k\cap \clos \sigma(b)=\emptyset$, $k\geq 1$, and $\mu(S_k)=o(|I_k|)$,
$k\to\infty$, then
the embedding $\mathcal{H}(b) \subset L^2(\mu)$ is compact.
\end{enumerate}
\end{Thm}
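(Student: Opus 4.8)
The plan is to reduce the embedding to the Bernstein-type inequality of Theorem \ref{maintheorem} on each square $S_k$, and then sum up using the Carleson condition (\ref{cond1}). First I would fix $p$ with $1<p<2$ as in (\ref{cond2}) and let $f\in\mathcal{H}(b)$. The starting point is that on each square $S_k$ we want to control $\int_{S_k}|f|^2\,d\mu$ by something proportional to $|I_k|$ times a ``local energy'' of $f$. The natural device is to compare $f$ on $S_k$ with its value at (or its average over) a reference point, say a point $\zeta_k$ with $\Ima\zeta_k$ comparable to $|I_k|$ sitting above $I_k$: writing $f(z)=f(\zeta_k)+\int_{[\zeta_k,z]}f'(u)\,du$ and using that the diameter of $S_k$ is $2|I_k|$, one gets $|f(z)|^2\le 2|f(\zeta_k)|^2+2|I_k|\int_{[\zeta_k,z]}|f'(u)|^2\,|du|$. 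Since $\mu(S_k)\le C|I_k|$, integrating over $S_k$ gives
\[
\int_{S_k}|f|^2\,d\mu\le C|I_k|\,|f(\zeta_k)|^2+C|I_k|^2\int_{S_k'}|f'(u)|^2\,|du\,dv|,
\]
where $S_k'$ is a slightly enlarged square. To handle the first term, I would sum $\sum_k|I_k|\,|f(\zeta_k)|^2$; since $\Ima\zeta_k\asymp|I_k|$ and $\sum_k\delta_{I_k}\in\mathcal{C}$, one checks that $\sum_k|I_k|\delta_{\zeta_k}$ (or an obvious thickening of it) is a Carleson measure, so by the classical Carleson embedding for $H^2(\mathbb{C}_+)$ this sum is $\lesssim\|f\|_2^2\le\|f\|_b^2$.

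The second term is where condition (\ref{cond2}) enters. On the slice $S_k\cap\{\Ima z=y\}$ I would write $|f'(u)|^2=|f'(u)w_p(u)|^2\cdot w_p(u)^{-2}$ and apply Cauchy--Schwarz in a slightly different spirit: for each horizontal slice, $|I_k|^2\int_{S_k\cap\{\Ima z=y\}}|f'(u)|^2\,|du|$. Actually the cleaner route is to use (\ref{cond2}) to build an auxiliary measure. Define $\nu=\sum_k\frac{|I_k|}{\text{something}}\cdots$; more concretely, I would show that the measure $d\tilde\mu(u+iv):=w_p^{-2}(u+iv)\,\mathbf{1}_{\bigcup_k S_k}(u+iv)\,du\,dv$, suitably normalized, is a Carleson measure in $\overline{\mathbb{C}_+}$ with constant controlled by (\ref{cond1}) and (\ref{cond2}): indeed a Carleson square $S(x,h)$ meets boundedly many $S_k$ with $|I_k|\gtrsim h$, and for each such $S_k$, (\ref{cond2}) bounds $\int_{S_k\cap\{\Ima z=y\}}w_p^{-2}\,|du|$ by $C/|I_k|$, so integrating in $y$ over a range of length $\lesssim h$ and summing gives $\tilde\mu(S(x,h))\lesssim h$ using $\sum_k\delta_{I_k}\in\mathcal{C}$; the squares with $|I_k|\ll h$ contribute via the same Carleson bound on $\sum\delta_{I_k}$. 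Then, since $\mu(S_k)\le C|I_k|$ and $|I_k|\asymp\Ima z$ on $S_k$, one absorbs the extra $|I_k|^2$ factor and $\int_{\bigcup_kS_k}|f'(u)|^2\,d\mu'\lesssim\int|f'w_p|^2\,d\tilde\mu'$ for an appropriate Carleson $\tilde\mu'$; applying Theorem \ref{maintheorem} with this Carleson measure yields $\lesssim\|f\|_b^2$. Summing over $k$ then gives $\|f\|_{L^2(\mu)}^2\le C\|f\|_b^2$, proving (a).

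For part (b), I would run the same argument but tracking smallness. Fix $\eta>0$. Split the squares into those with $|I_k|$ large or $\operatorname{dist}(S_k,0)$ small (finitely many, since $\sum\delta_{I_k}\in\mathcal{C}$ forces the $S_k$ to be ``locally finite and decaying''), and the rest. On the cofinite family, the hypotheses $\mu(S_k)=o(|I_k|)$ and $I_k\cap\clos\sigma(b)=\emptyset$ let one make both the ``$|f(\zeta_k)|^2$'' part and the ``$|f'w_p|^2$'' part contribute at most $\eta\|f\|_b^2$: the point-evaluation part because $\sum_{k\text{ large}}|I_k|\,|f(\zeta_k)|^2$ is controlled by a vanishing-Carleson tail, and the derivative part because the relevant Carleson constant of the tail measure is $o(1)$, giving a compact operator via the standard argument that a uniformly bounded family of operators which is small off a finite-dimensional piece is compact (equivalently, approximate the embedding by finite-rank operators supported on finitely many $S_k$, where $\mathcal{H}(b)$ embeds compactly into $L^2$ of a compactly supported vanishing Carleson measure on a region bounded away from $\sigma(b)$).

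The main obstacle I anticipate is the construction in the second paragraph: verifying that the weighted Lebesgue measure $w_p^{-2}\,\mathbf{1}_{\bigcup S_k}\,dx\,dy$ (rescaled by the $|I_k|$ factors coming from the telescoping estimate) is genuinely a Carleson measure, uniformly in the geometry of the family $\{S_k\}$. This is precisely the content that condition (\ref{cond2}) is designed to deliver, but one has to be careful that a given Carleson square $S(x,h)$ can intersect many $S_k$ of comparable or smaller size, and that the vertical integration (the squares $S_k$ are not anchored to the real axis, they can float at any height $y_0$) interacts correctly with the $\sup_{y\ge0}$ in (\ref{cond2}); reconciling the $|I_k|^2$ from the mean-value estimate against $\mu(S_k)\le C|I_k|$ and the $1/|I_k|$ from (\ref{cond2}) is the delicate bookkeeping step. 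Once that measure-theoretic lemma is in place, everything else is a routine application of Theorem \ref{maintheorem} and the classical Carleson embedding theorem.
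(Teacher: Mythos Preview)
Your overall architecture---reduce to Bernstein plus a Carleson embedding, handle a ``reference value'' term and an ``oscillation'' term separately---is right, but the oscillation step as written does not go through, and this is exactly where the paper's argument differs from yours.

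The problematic passage is the claim
\[
\int_{S_k}|f|^2\,d\mu\;\le\; C|I_k|\,|f(\zeta_k)|^2+C|I_k|^2\int_{S_k'}|f'(u)|^2\,|du\,dv|.
\]
From $|f(z)-f(\zeta_k)|^2\le 2|I_k|\int_{[\zeta_k,z]}|f'|^2\,|du|$ you get, after integrating against $d\mu(z)$, the iterated integral $\int_{S_k}\int_{[\zeta_k,z]}|f'|^2\,|du|\,d\mu(z)$. There is no mechanism to convert this into a two--dimensional area integral of $|f'|^2$: $\mu$ is arbitrary (it could be a point mass at a single $z_0$, giving a line integral along one segment), and the segments $[\zeta_k,z]$ do not sweep out $S_k$ with any controlled multiplicity relative to $\mu$. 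Your fallback plan of showing that $w_p^{-2}\mathbf 1_{\bigcup S_k}\,dx\,dy$ is Carleson is therefore disconnected from the quantity you actually need to estimate. There is a second, related issue: condition (\ref{cond2}) bounds $\int w_p^{-2}$ only on \emph{horizontal} slices $S_k\cap\{\Ima z=y\}$, whereas your segments $[\zeta_k,z]$ are oblique, so even a weighted Cauchy--Schwarz on those segments would not invoke (\ref{cond2}) directly.

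The device you are missing is this: instead of fixing $\zeta_k$ in advance, let $w_k\in S_k$ be a point where $|f|$ attains its maximum on $S_k$ (after reducing to $f$ continuous on each $S_k$, which is a density argument using Corollary \ref{cor:continuite-H(b)}). Then $\int_{S_k}|f|^2\,d\mu\le |f(w_k)|^2\mu(S_k)$, and you must bound $\sum_k|f(w_k)|^2|I_k|$. Now compare $f(w_k)$ with the values of $f$ along the \emph{horizontal} segment $J_k=S_k\cap\{\Ima z=\Ima w_k\}$: the measure $\nu=\sum_k\delta_{J_k}$ is Carleson with constant uniformly controlled by (\ref{cond1}) (each $J_k$ is a vertical translate of $I_k$ lying above it), so $\|f\|_{L^2(\nu)}\lesssim\|f\|_b$ handles the reference term; and for the oscillation, $|f(z)-f(w_k)|^2\le\big(\int_{J_k}w_p^{-2}\big)\cdot\int_{J_k}|f'w_p|^2$ with the first factor bounded by (\ref{cond2}) precisely because $J_k$ is a horizontal slice, and the second summing to $\|f'w_p\|_{L^2(\nu)}^2\lesssim\|f\|_b^2$ by Theorem \ref{maintheorem}. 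This use of $f$--dependent maximizing points and horizontal reference segments is the substantive idea your proposal is missing.

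For part (b) your outline (split off finitely many $S_k$, show the tail has small norm, show each $\mathcal I_{S_k}$ is compact by finite-rank approximation) is correct and matches the paper. Note that the hypothesis $I_k\cap\clos\sigma(b)=\emptyset$ is exactly what guarantees $w_p$ is continuous on $S_k$, so that you can partition $S_k$ into subsquares on which $\int w_p^{-2}$ is uniformly small; you mention the hypothesis but should make this role explicit.
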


For the model subspaces a result, analogous to Theorem \ref{embeddings2},
was obtained in \cite[Theorem 2.2]{Baranov05}.
For the sake of completeness, we include the proof.

\begin{proof} (a)  The idea of the proof is to replace
the measure $\mu$ with some Carleson measure $\nu$, and to estimate
the difference between the norms $\|f\|_{L^2(\mu)}$
and $\|f\|_{L^2(\nu)}$ using the Bernstein-type inequality of Section 4.

It follows from Corollary~\ref{cor:continuite-H(b)} (b)
that the set of functions $f\in \mathcal{H}(b)$
which are continuous on each of $S_k$
is dense in $\mathcal{H}(b)$ (take the reproducing kernels $k_z^b$,
$z\in \mathbb{C}^+$). Thus it is sufficient to prove the estimate
$\|f\|_{L^2(\mu)} \le C\|f\|_b$
only for $f \in \mathcal{H}(b)$ continuous on $\bigcup\limits_k S_k$.
Now let $f \in \mathcal{H}(b)$ be continuous on each of $S_k$.
Then there exist $w_k \in S_k$ such that
\begin{equation}
\label{61-4}
\|f\|^2_{L^2(\mu)} \le \sum\limits_k |f(w_k)|^2\mu(S_k) \le
\sup\limits_k \frac{\mu(S_k)}{|I_k|} \cdot \sum\limits_k |f(w_k)|^2 |I_k|.
\end{equation}
Statement (a) will be proved as soon as we show that
\begin{equation}
\label{61-5}
\sum\limits_k |f(w_k)|^2 |I_k| \le C\|f\|_b^2
\end{equation}
where the constant $C$ does not depend on $f$
and on the choice of $w_k\in S_k$.

Consider the intervals $J_k = S_k\cap \{\Ima z= \Ima w_k\}$.
Let $\nu =\sum_k \delta_{J_k}$. Then it follows from (\ref{cond1})
that $\nu\in \mathcal{C}$ (and the Carleson constants $C_\nu$ of
such measures $\nu$ are uniformly bounded). We have
\begin{equation}
\label{61-3}
\bigg(\sum\limits_k |f(w_k)|^2 |I_k|\bigg)^{1/2}
\le \|f\|_{L^2(\nu)} +
\bigg(\sum\limits_k \int_{J_k} |f(z)-f(w_k)|^2 |dz|\bigg)^{1/2},
\end{equation}
and $\|f\|_{L^2(\nu)} \le C_1 \|f\|_2\le C_1\|f\|_b$.

We estimate the last term in (\ref{61-3}).
For $z\in J_k$ denote by $[z, w_k]$ the straight line interval
with the endpoints $z$ and $w_k$.
Then $f(z)-f(w_k) = \int_{[z, w_k]} f'(u) du$
(in the case $J_k \subset \mathbb{R}$ note that, by Corollary~\ref{cor:continuite-H(b)} (a),
any $f\in \mathcal{H}(b)$ is differentiable on $J_k$
except, may be, at the endpoints). So, by the
Cauchy--Schwartz inequality,
$$
\sum\limits_k \int_{J_k} |f(z)-f(w_k)|^2 |dz| \le
\sum\limits_k \int_{J_k} \bigg|
\int_{J_k} |f'(u)| |du|\bigg|^2 |dz|
$$
$$
\le  \sum\limits_k |J_k|
\bigg(\int_{J_k} w_p^{-2}(u) |du|\bigg)
\bigg(\int_{J_k} |f'(u)|^2 w_p^2(u) |du|\bigg).
$$
By (\ref{cond2}), we obtain
$$
\sum\limits_k \int_{J_k} |f(z)-f(w_k)|^2 |dz|
\le C_2 \sum\limits_k \int_{J_k} |f'(u)|^2 w_p^2(u) |du|
$$
$$
= C_2 \|f' w_p\|^2_{L^2(\nu)} \le C_3\|f\|_b^2,
$$
where the last inequality follows from Theorem \ref{maintheorem}.
\smallskip

(b) For a Borel set
$E\subset\overline{\mathbb{C}_+}$ define the operator
${\mathcal I}_E:  \mathcal{H}(b)\to L^2(\mu)$ by
${\mathcal I}_E f= \chi_E f$
where $\chi_E$ is the characteristic function of $E$.
For $N\in \mathbb{N}$ put $F_N= \bigcup\limits_{k=1}^N S_k$
and $\widehat F_N = \overline{\mathbb{C}_+} \setminus F_N$.
As above we assume that
$f\in \mathcal{H}(b)$ is continuous on $\bigcup\limits_k S_k$.
Then it follows from (\ref{61-4}) and (\ref{61-5}) that
$$
\int_{\widehat F_N} |f|^2 d\mu \le C\sup\limits_{k>N}
\frac{\mu(S_k)}{|I_k|}\|f\|_b^2,
$$
and so $\|{\mathcal I}_{\widehat F_N}\| \to 0$,
$N\to\infty$. Statement (b) will be proved as soon as we show that
${\mathcal I}_{F_N}$ is a compact operator for any $N$ (thus,
our embedding operator
${\mathcal I}_{F_N}+{\mathcal I}_{\widehat F_N}$
may be approximated in the operator norm by compact operators
${\mathcal I}_{F_N}$). Clearly, it suffices to prove the compactness
of ${\mathcal I}_{S_k}$ for each fixed $k$.

We approximate ${\mathcal I}_{S_k}$ by finite rank operators.
For a given $\epsilon>0$, partition the square $S_k$ into finite union of
squares $\{\tilde S_l\}_{l=1}^L$ with
pairwise disjoint interiors so that
\begin{equation}
\label{61-8}
\bigg(\, \int_{[\zeta, z]}
w^{-2}_p(u) |du| \bigg)<\epsilon
\end{equation}
for any $l$, $1\le l\le L$, and any  $\zeta, z \in \tilde S_l$.
Such a partition exists since $I_k\cap \clos \sigma(b)=\emptyset$,
$k\geq 1$. Indeed, $b$ is analytic in a neighborhood
of $S_k$, and the norms involved in the definition of $w_p(z)$ are
continuous on $S_k$.

Now fix $\zeta_l \in \tilde S_l$ and
consider the finite rank operator $T: \mathcal{H}(b) \to L^2(\mu)$,
$(Tf)(z) = \sum_{l=1}^L f(\zeta_l) \chi_{\tilde S_l}(z)$.
We show that $\| {\mathcal I}_{S_k} - T\|^2 \le C\epsilon$. As in the proof
of (a), we have
$$
\|({\mathcal I}_{S_K} - T) f\|_{L^2(\mu)}^2 =
\sum\limits_{l=1}^L \int_{\tilde S_l}|f(z)-f(\zeta_l)|^2d\mu(z)
$$
$$
\le
\sum\limits_{l=1}^L   \int_{\tilde S_l}
\bigg(\, \int_{[\zeta_l, z]} |f'(u)|^2 w_p^2(u) |du| \bigg)\cdot
\bigg(\, \int_{[\zeta_l, z]} w^{-2}_p(u) |du|
\bigg) d\mu(z).
$$
By Theorem \ref{maintheorem},
$$
 \int_{[\zeta_l, z]} |f'(u)|^2 w^2_p(u) |du| \le C_1 \|f\|_b^2
$$
where $C_1$ does not depend on $f\in \mathcal{H}(b)$,
$1\le l\le L$ and $z\in \tilde S_l$.
Hence, by (\ref{61-8}),
$$
\|({\mathcal I}_{S_K} - T) f \|_{L^2(\mu)}^2 \le C_1\epsilon
\|f\|_b^2
\sum\limits_{l=1}^L \mu(\tilde S_l) = C_1\epsilon \mu(S_k) \|f\|_b^2.
$$
We conclude that ${\mathcal I}_{S_K}$ may be approximated
by finite rank operators and is, therefore, compact.
\end{proof}

We comment now on a couple of details of the proof
where the situation differs from the inner case.

\begin{remark}{\rm In the inner case $b=\Theta$
one can prove the estimate $\|f\|_{L^2(\mu)}\leq C\|f\|_2$
for functions $f$ in $K_\Theta^2$ which are continuous on
the closed upper half-plane $\overline{\mathbb{C}_+}$ and then
use a result of Aleksandrov
\cite{aleksandrov} which says that such functions are dense
in $K_\Theta^2$. We do not know if this result is still valid
in $\mathcal{H}(b)$. To avoid this difficulty,
in the proof of Theorem \ref{embeddings2}, we
used the density in $\mathcal{H}(b)$ of the
functions continuous on all squares $S_k$. }
\end{remark}

\begin{question}\label{quest-1}
{\rm Let $b$ be in the unit ball of
$H^\infty(\mathbb{C}_+)$.
Is it true that the set of functions $f$ in $\mathcal{H}(b)$,
continuous on $\overline{\mathbb{C}_+}$, is dense in $\mathcal{H}(b)$? }
\end{question}

\begin{remark}{\rm
In the inner case, in Theorem \ref{embeddings2}, the assumption
(\ref{cond2}) can be replaced by the weaker assumption
(only for the lower side of the square)
\begin{eqnarray}\label{eq:hypo-interieur}
 \sup\limits_{k\geq 1} |I_k| \, \int_{I_k}w_p^{-2}(u)|du|<\infty.
\end{eqnarray}
It was noticed in \cite[Corollary 4.7]{Baranov05} that in the inner case,
for $q>1$, there exists $C=C(q)>0$ such that,
for any $x\in\mathbb{R}$ and $0\leq
y_2\leq y_1$, we have
\begin{eqnarray}\label{eq:monotonie-norme}
\|k_{x+iy_1}^b\|_q\leq C(q) \|k_{x+iy_2}^b\|_q.
\end{eqnarray}
Thus, it follows from (\ref{eq:monotonie-norme}) that if
the sequence $\{S_k\}$ satisfies (\ref{eq:hypo-interieur}),
then it also satisfies (\ref{cond2}). }
\end{remark}

\begin{question}\label{quest-2}
{\rm Does the monotonicity property (\ref{eq:monotonie-norme}) of the
norms of the reproducing kernels along the rays parallel to imaginary axis
remains true for a general $b$? (It is true for $q=2$,
but this is not the interesting case for us.) }
\end{question}

\begin{proof} {\it of Theorem \ref{embeddings1}.} $\mathrm{(a)}$ Consider the
open set $E = \mathbb{R}\setminus
\clos \widetilde{\Omega}(b, \varepsilon)$. If $E=\emptyset$, then $\mu$ is a
Carleson measure and $\mathcal{H}(b)\subset H^2(\mathbb{C}_+)\subset L^2(\mu)$. So we may
assume that $E\ne\emptyset$ and we can write it as a union
of disjoint intervals $\Delta_l$. Note that
$\int_{\Delta_l}(\tilde d_\varepsilon(t))^{-1} dt=\infty $.
Hence, partitioning the intervals $\Delta_l$,
we may represent $E$ as a union of intervals
$I_k$ with mutually disjoint interiors such that
$$
\int_{I_k} \big[ \tilde d_\varepsilon(t) \big]^{-1} dt =\frac{1}{2}.
$$
It follows that there exists $x_k\in I_k$ such that
$\tilde d_\varepsilon(x_k) =2|I_k|$. Hence, for any $x\in I_k$,
$\tilde d_\varepsilon(x)\ge \tilde d_\varepsilon(x_k) - |I_k| =|I_k|$ and
$\tilde d_\varepsilon(x) \le 3|I_k|$.
This implies
$$
|I_k| \,\,  \int_{I_k} \big[ \tilde d_\varepsilon(t) \big]^{-2}
dt \le 1,
$$
and using Lemma \ref{lem:levelsets2}, we conclude that the intervals $I_k$
satisfy (\ref{cond2}). Condition (\ref{cond1}) is obvious.

Let $S_k=S(I_k)$ be the Carleson square with the lower side $I_k$,
let $F=\bigcup_k S_k$, and let $G=\overline {\mathbb{C_+}}\setminus
F$. Put $\mu_1=\mu|_F$ and $\mu_2=\mu|_G$. We show that
the measure $\mu_1$ satisfies the conditions of Theorem
\ref{embeddings2} whereas $\mu_2$ is a usual Carleson measure
(and, thus, $\mathcal{H}(b)\subset H^2(\mathbb{C}_+)\subset L^2(\mu_2)$).

Let us show that $\mu_1(S_k) \le C_2|I_k|$. Indeed,
it follows from the estimate $|I_k| \le
\tilde d_\varepsilon(x)\le 3|I_k|$, $x\in I_k$,
that $S(6 I_k)\cap \widetilde{\Omega}(b, \varepsilon) \ne \emptyset$
(by $6I_k$ we denote the 6 times larger interval
with the same center as $I_k$).
By the hypothesis, $\mu_1 (S_k) \le \mu (S(6I_k)) \le C |I_k|$.
Hence, $\mu_1$ satisfies the conditions of Theorem \ref{embeddings2} (a),
and so $\mathcal{H}(b) \subset L^2(\mu_1)$.

Now we show that $\mu_2 \in \mathcal{C}$.
Assume that $S(I) \cap G\ne\emptyset$
for some interval $I\subset \mathbb{R}$, and let $z=x+iy\in S(I) \cap G$.
If $x\in \clos \widetilde{\Omega}(b, \varepsilon)$,
then $S(2I)\cap\widetilde{\Omega}(b, \varepsilon)\ne \emptyset$.
Otherwise, if $x\in I_k$ for some $k$, then
$\tilde d_\varepsilon(x) \le 3|I_k|\le 3|I|$ since
$z\in S(I)\setminus S(I_k)$. Thus
\begin{eqnarray}\label{eq:levint}
S(6I)\cap\widetilde{\Omega}(b,
\varepsilon)\ne \emptyset.
\end{eqnarray}
By the hypothesis, $\mu_2(S(I)) \le \mu(S(6I)) \le C|I|$,
and so $\mu_2$ is a Carleson measure.

$\mathrm{(b)}$ Let $F,G, \mu_1$ and $\mu_2$
be the same as above. We show that $\mu_1$
satisfies the conditions of Theorem \ref{embeddings2} (b), whereas
$\mu_2$ is a vanishing Carleson measure. Indeed, we can split
the family $\{S_k\}$ into two families
$\{S_k\}_{k\in K_1}$ and $\{S_k\}_{k\in K_2}$ such that
$|I_k|\to 0$, $k\to\infty$, $k\in K_1$, whereas
${\rm dist}\, (I_k, 0)\to\infty$ when $k\to\infty$, $k\in K_2$.
Since $S(6 I_k)\cap \widetilde{\Omega}(b, \varepsilon) \ne \emptyset$
we conclude that Theorem \ref{embeddings2} (b) applies to
$\mu_1$ and the embedding $\mathcal{H}(b)\subset L^2(\mu_1)$
is compact. Finally, any Carleson square $S(I)$
with $S(I) \cap G\ne\emptyset$ satisfies (\ref{eq:levint}), and so, by the assumptions
of Theorem \ref{embeddings1} (b),
$\mu_2$ is a vanishing Carleson measure.

\end{proof}

We state an analogous result for the spaces
in the unit disc (for the case of inner functions statement (b) is proved in
\cite{Baranov06}; it answers a question posed in \cite{CiMat03}).

\begin{Thm}\label{embeddings3}
Let $\mu$ be a Borel measure in the closed unit disc $\overline{\mathbb{D}}$, and
let $\varepsilon\in (0,1)$.
\begin{enumerate}
\item[$\mathrm{(a)}$] Assume that $\mu(S(x,h) \le Ch$ for all Carleson squares
$S(x,h)$ such that $S(x,h) \cap \widetilde{\Omega}(b, \varepsilon)
\ne\emptyset$. Then $\mathcal{H}(b)\subset L^2(\mu)$.
\item[$\mathrm{(b)}$] If, moreover, $\mu(S(x,h))/h \to 0$ when $h\to 0$
and $S(x,h) \cap \widetilde{\Omega}(b, \varepsilon) \ne\emptyset$, then
the embedding $\mathcal{H}(b)\subset L^2(\mu)$
is compact.
\end{enumerate}
\end{Thm}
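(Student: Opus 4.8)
The plan is to transfer the entire half-plane argument of Theorem \ref{embeddings1} to the disc setting, so the first step is to record the disc analogues of the tools used there. By the remark following Theorem \ref{maintheorem}, the Bernstein-type inequality $\|f'w_p\|_{L^2(\mu)}\le C\|f\|_b$ holds verbatim in the disc (with $w_p$ now defined via the disc reproducing kernels and the disc version of $\mathfrak{K}^\rho_{z,1}$), the constant depending only on $p$ and the Carleson constant of $\mu$. Likewise, the level-set lemmas of Section 5 have disc counterparts: with $\Omega(b,\varepsilon)=\{z\in\mathbb{D}:|b(z)|<\varepsilon\}$, $\sigma(b)$ the boundary spectrum, and $\widetilde\Omega(b,\varepsilon)=\sigma(b)\cup\Omega(b,\varepsilon)$, one has $(\tilde d_\varepsilon(x))^n\le C\,w_{p,n}(z)$ for $z$ in the Carleson box over $x$, and $\int_{\Delta}\tilde d_\varepsilon(t)^{-1}\,|dt|=\infty$ on any arc $\Delta\subset\mathbb{T}\setminus\clos\widetilde\Omega(b,\varepsilon)$ (this last divergence is exactly what makes the partition below possible, and it follows from the boundary behaviour of $|b'|$ just as in the half-plane). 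I would also first formulate and prove the disc analogue of Theorem \ref{embeddings2}: for a sequence of closed squares $\{S_k\}$ in $\overline{\mathbb D}$ with lower sides (inner arcs) $I_k$ satisfying $\sum_k\delta_{I_k}\in\mathcal C$ and $\sup_k|I_k|\int_{S_k\cap\{|z|=r\}}w_p^{-2}\,|dz|<\infty$, and a Borel measure $\mu$ supported on $\bigcup_k S_k$, the embedding $\mathcal H(b)\subset L^2(\mu)$ holds if $\mu(S_k)\le C|I_k|$, and is compact if in addition $I_k\cap\clos\sigma(b)=\emptyset$ and $\mu(S_k)=o(|I_k|)$. The proof is word-for-word that of Theorem \ref{embeddings2}: approximate $\mu$ by the Carleson measure $\nu=\sum_k\delta_{J_k}$ on slices $J_k$, estimate $|f(z)-f(w_k)|$ by $\int_{[z,w_k]}|f'|$, apply Cauchy--Schwarz splitting off $w_p^{\pm1}$, and invoke the (disc) Bernstein inequality; for compactness, approximate each $\mathcal I_{S_k}$ by finite-rank operators using that $b$ is analytic near $S_k$ when $I_k\cap\clos\sigma(b)=\emptyset$, so the norms defining $w_p$ are continuous there. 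Density of the functions in $\mathcal H(b)$ continuous on each $S_k$ is again supplied by the disc analogue of Corollary \ref{cor:continuite-H(b)}(b), taking reproducing kernels $k_z^b$, $z\in\mathbb D$.

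With that machinery in place, the proof of Theorem \ref{embeddings3} itself is a routine repackaging. For part (a), set $E=\mathbb T\setminus\clos\widetilde\Omega(b,\varepsilon)$; if $E=\emptyset$ then the hypothesis makes $\mu$ a genuine Carleson measure and $\mathcal H(b)\subset H^2(\mathbb D)\subset L^2(\mu)$. Otherwise write $E$ as a disjoint union of arcs, and using $\int_{\Delta_l}\tilde d_\varepsilon^{-1}=\infty$ subdivide into arcs $I_k$ with $\int_{I_k}\tilde d_\varepsilon(t)^{-1}\,|dt|=\tfrac12$; pick $x_k\in I_k$ with $\tilde d_\varepsilon(x_k)=2|I_k|$, so that $|I_k|\le\tilde d_\varepsilon(x)\le 3|I_k|$ for $x\in I_k$, whence $|I_k|\int_{I_k}\tilde d_\varepsilon^{-2}\le1$ and, by the disc version of Lemma \ref{lem:levelsets2}, condition (\ref{cond2}) holds for the boxes $S_k=S(I_k)$; (\ref{cond1}) is immediate. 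Splitting $\mu=\mu_1+\mu_2$ with $\mu_1=\mu|_{\bigcup S_k}$ and $\mu_2=\mu|_{\overline{\mathbb D}\setminus\bigcup S_k}$, the estimate $|I_k|\le\tilde d_\varepsilon(x)\le 3|I_k|$ forces $S(6I_k)\cap\widetilde\Omega(b,\varepsilon)\ne\emptyset$, so by hypothesis $\mu_1(S_k)\le\mu(S(6I_k))\le C|I_k|$ and the disc analogue of Theorem \ref{embeddings2}(a) gives $\mathcal H(b)\subset L^2(\mu_1)$; a parallel argument shows any Carleson box meeting the support of $\mu_2$ enlarges to one meeting $\widetilde\Omega(b,\varepsilon)$, so $\mu_2\in\mathcal C$ and $\mathcal H(b)\subset H^2(\mathbb D)\subset L^2(\mu_2)$. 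Part (b) is the same decomposition: the vanishing hypothesis gives $\mu_1(S_k)=o(|I_k|)$ and, since the $I_k$ lie in $\mathbb T\setminus\clos\widetilde\Omega(b,\varepsilon)\subset\mathbb T\setminus\clos\sigma(b)$, the compactness part of the disc Theorem \ref{embeddings2} applies, while $\mu_2$ is a vanishing Carleson measure, so both $\mathcal I_{\mu_1}$ and $\mathcal I_{\mu_2}$ are compact.

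The only genuinely new work, and hence the main obstacle, is verifying the disc versions of the auxiliary results that the argument rests on: principally the disc analogue of Lemma \ref{lem:levelsets2} (the lower bound $(\tilde d_\varepsilon(x))^n\le C\,w_{p,n}(z)$), which in the half-plane relied on Lemma \ref{Lem:derivee} and Lemma \ref{lem:levelsets1} to control $|b'|$ near the boundary in terms of the distance to the level set, and the density statement (disc Corollary \ref{cor:continuite-H(b)}(b)). Once these are checked --- and they go through by the same computations, replacing the Poisson/Cauchy kernels of $\mathbb C_+$ by those of $\mathbb D$ and Carleson squares $S(x,h)$ by the corresponding boxes --- the rest is a direct transcription. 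I would therefore spend most of the writing making the dictionary $\mathbb C_+\leftrightarrow\mathbb D$ explicit for $w_p$, $\tilde d_\varepsilon$, and the Carleson boxes, and then simply cite the half-plane proofs for the combinatorial core.
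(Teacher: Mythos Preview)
The paper does not actually give a proof of Theorem \ref{embeddings3}; it is merely stated as the disc analogue of Theorem \ref{embeddings1}, with a reference to \cite{Baranov06} for the inner case. Your proposal---transferring the half-plane argument of Theorems \ref{embeddings1} and \ref{embeddings2} to the disc via the obvious dictionary (disc reproducing kernels, disc Carleson boxes, disc $w_p$ and $\tilde d_\varepsilon$)---is precisely the intended route, and your outline of the partition of $\mathbb{T}\setminus\clos\widetilde\Omega(b,\varepsilon)$ into arcs $I_k$ with $\int_{I_k}\tilde d_\varepsilon^{-1}=1/2$, the splitting $\mu=\mu_1+\mu_2$, and the reduction to the disc version of Theorem \ref{embeddings2} is correct. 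One small remark: in the disc the set $\mathbb{T}\setminus\clos\widetilde\Omega(b,\varepsilon)$ is automatically a countable union of arcs of finite length, so in part (b) you do not need to separate the family $\{S_k\}$ into two subfamilies (one with $|I_k|\to 0$, one escaping to infinity) as was done in the half-plane proof---the single condition $\mu(S_k)=o(|I_k|)$ as $k\to\infty$ suffices, since there is no non-compactness at infinity to worry about.
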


For a class of functions $b$ the converse to Theorem \ref{embeddings1}
is also true. We say that $b$ satisfies the {\it connected level set
condition}
if the set $\Omega(b, \varepsilon)$ is connected for some $\varepsilon \in (0,1)$.
Our next result is analogous to certain results from \cite{Cohn82} and
to \cite[Theorem 3]{VolbergTreil}.

\begin{Thm}\label{embeddings4}
Let $b$ satisfy the connected level set condition for some $\varepsilon\in (0,1)$.
Assume that $\Omega(b, \varepsilon)$ is unbounded and
$\sigma(b) \subset \clos \Omega(b, \varepsilon)$. Let  $\mu$ be a Borel measure
on $\overline{\mathbb{C}_+}$. Then the following statements are equivalent:
\begin{enumerate}
\item[$\mathrm{(a)}$] $\mathcal{H}(b)\subset L^2(\mu)$.
\item[$\mathrm{(b)}$] There exists $C>0$ such that
$\mu(S(x,h)) \le Ch$ for all Carleson squares
$S(x,h)$ such that $S(x,h) \cap \widetilde{\Omega}(b, \varepsilon)
\ne\emptyset$.
\item[$\mathrm{(c)}$] There exists $C>0$ such that
\begin{equation}
\label{poisson} \int_{\overline{\mathbb{C}}_+} \frac{\Ima z}{|\zeta-\overline
z|^2}d\mu(\zeta)
\le  \frac{C}{1-|b(z)|}, \qquad z\in \mathbb{C}_+.
\end{equation}
\end{enumerate}
\end{Thm}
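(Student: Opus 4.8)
The three conditions are linked in a cycle $\mathrm{(b)}\Rightarrow\mathrm{(a)}\Rightarrow\mathrm{(c)}\Rightarrow\mathrm{(b)}$, which I would prove in that order. The implication $\mathrm{(b)}\Rightarrow\mathrm{(a)}$ is precisely Theorem~\ref{embeddings1}~$\mathrm{(a)}$ and uses none of the extra hypotheses. For $\mathrm{(a)}\Rightarrow\mathrm{(c)}$ I would test the embedding (bounded by the closed graph theorem, say with constant $C$) on the reproducing kernels. Using $\langle f,k^b_z\rangle_b=2\pi i f(z)$ together with (\ref{eq:noyau-reproduisant}) one has
\[
\|k^b_z\|_b^2=2\pi i\,k^b_z(z)=\frac{\pi\,(1-|b(z)|^2)}{\Ima z},\qquad
|k^b_z(\zeta)|=\frac{|1-\overline{b(z)}\,b(\zeta)|}{|\zeta-\overline z|}\geq\frac{1-|b(z)|}{|\zeta-\overline z|},\qquad \zeta\in\overline{\mathbb{C}_+},
\]
the last inequality because $|b(\zeta)|\leq 1$. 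Plugging $f=k^b_z$ into $\|f\|_{L^2(\mu)}\leq C\|f\|_b$ gives
\[
(1-|b(z)|)^2\int_{\overline{\mathbb{C}_+}}\frac{d\mu(\zeta)}{|\zeta-\overline z|^2}\leq\|k^b_z\|^2_{L^2(\mu)}\leq \pi C^2\,\frac{(1-|b(z)|)(1+|b(z)|)}{\Ima z},
\]
and dividing by $1-|b(z)|>0$ yields $\mathrm{(c)}$ with constant $2\pi C^2$. Note that the connected level set condition and the hypotheses on $\Omega(b,\varepsilon)$ play no role up to here.

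The heart of the theorem is $\mathrm{(c)}\Rightarrow\mathrm{(b)}$, where the connectedness and unboundedness of $\Omega(b,\varepsilon)$ and the inclusion $\sigma(b)\subset\clos\Omega(b,\varepsilon)$ are used. I would reduce it to the following geometric claim: there are constants $\delta\in(0,1)$ and $A>1$, depending only on $b$ and $\varepsilon$, such that every Carleson square $S=S(x,h)$ with $S\cap\widetilde{\Omega}(b,\varepsilon)\neq\emptyset$ admits a point $z_S\in\mathbb{C}_+$ with $|\Rea z_S-x|\leq Ah$, $A^{-1}h\leq\Ima z_S\leq Ah$ and $1-|b(z_S)|\geq\delta$. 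Granting this, one concludes at once: for $\zeta\in S$ one has $|\zeta-\overline{z_S}|\leq(2A+2)h$ and $\Ima z_S\geq h/A$, hence $\Ima z_S/|\zeta-\overline{z_S}|^2\geq c(A)/h$, so that
\[
\frac{\mu(S)}{h}\leq\frac{1}{c(A)}\int_{\overline{\mathbb{C}_+}}\frac{\Ima z_S}{|\zeta-\overline{z_S}|^2}\,d\mu(\zeta)\leq\frac{1}{c(A)}\cdot\frac{C}{1-|b(z_S)|}\leq\frac{C}{c(A)\,\delta},
\]
which is $\mathrm{(b)}$.

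To prove the geometric claim I would argue as in Cohn~\cite{Cohn82} and Volberg--Treil~\cite{VolbergTreil}. Since $\clos\widetilde{\Omega}(b,\varepsilon)=\clos\Omega(b,\varepsilon)$ (by $\sigma(b)\subset\clos\Omega(b,\varepsilon)$), the hypothesis $S\cap\widetilde{\Omega}(b,\varepsilon)\neq\emptyset$ furnishes a point $w_0\in\Omega(b,\varepsilon)$ with $|w_0-x|<2h$. As $\Omega(b,\varepsilon)$ is open, connected and unbounded, $w_0$ can be joined to $\infty$ by a path inside $\Omega(b,\varepsilon)$; let $\gamma$ be the sub-path from $w_0$ up to its first exit from $B(w_0,3h)$, which exists for every $h>0$ precisely because $\Omega(b,\varepsilon)$ is unbounded. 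Then $\gamma$ is a continuum contained in $\Omega(b,\varepsilon)\cap B(x,5h)$ of diameter at least $3h$. Now $u:=\log(1/|b|)$ is nonnegative and superharmonic on $\mathbb{C}_+$ with $u>\log(1/\varepsilon)$ on $\gamma$, so the minimum principle gives
\[
u(x+ih)\ \geq\ \log\frac{1}{\varepsilon}\cdot\omega\big(x+ih,\ \gamma;\ (B(x,10h)\cap\mathbb{C}_+)\setminus\gamma\big),
\]
and the harmonic measure on the right is bounded below by a scale invariant constant $c_0>0$, since $\gamma$ is a continuum of diameter $\asymp h$ within distance $\asymp h$ of the point $x+ih$, which itself sits at height $h$; Beurling's projection theorem bounds such a harmonic measure from below by its value for the extremal radial slit. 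Hence $|b(x+ih)|\leq\varepsilon^{\,c_0}=:1-\delta<1$, so $z_S=x+ih$ does the job with $A$ a fixed numerical constant. The point I expect to be the main obstacle is exactly this last step: making the construction fully quantitative with constants depending only on $b$ and $\varepsilon$, not on $S$ — in particular the uniform lower bound for the harmonic measure and the treatment of large squares, where the unboundedness of $\Omega(b,\varepsilon)$ is what guarantees that the path from $w_0$ actually leaves $B(w_0,3h)$. The rest is the scheme used for the model spaces in~\cite{Baranov05}, the new feature being that for a general $b$ one must work with the merely superharmonic function $\log(1/|b|)$ instead of a positive harmonic function.
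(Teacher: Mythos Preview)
Your proof is correct and follows essentially the same route as the paper: the cycle $\mathrm{(b)}\Rightarrow\mathrm{(a)}\Rightarrow\mathrm{(c)}\Rightarrow\mathrm{(b)}$, with the last implication obtained by using the connectedness and unboundedness of $\Omega(b,\varepsilon)$ to produce a curve $\gamma\subset\Omega(b,\varepsilon)$ of diameter $\asymp h$ near $S(x,h)$ and then a two-constants/harmonic-measure estimate to force $|b(x+ih)|\le\delta<1$. The paper phrases the last step as the ``theorem on two constants'' applied to $b$ on ${\rm Int}\,S(x,3h)\setminus\gamma$, which is exactly your minimum principle for the superharmonic function $\log(1/|b|)$ together with the uniform lower bound for $\omega(x+ih,\gamma;\cdot)$; your invocation of Beurling's projection is just one way to make that bound explicit, and your remark about needing superharmonicity rather than harmonicity is not in fact a new difficulty, since the two-constants theorem already only requires subharmonicity of $\log|b|$.
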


\begin{proof}
The implication $\mathrm{(b)} \Longrightarrow \mathrm{(a)} $ holds for any
$b$ by Theorem
\ref{embeddings1}, and the
implication $\mathrm{(a)}  \Longrightarrow \mathrm{(c)} $ is trivial (apply the inequality
$\|f\|_{L^2(\mu)}\le C\|f\|_b$ to $f=k_z^b$). To prove that
$\mathrm{(c)}  \Longrightarrow \mathrm{(b)} $,
we use an argument from \cite{VolbergTreil}.
Let $S(x,h)$ be a Carleson square such that
$S(x,h) \cap \widetilde{\Omega}(b, \varepsilon) \ne\emptyset$. Since
$\sigma(b) \subset \clos \Omega(b, \varepsilon)$ it follows that
$S(x,2h) \cap \Omega(b, \varepsilon) \ne\emptyset$. Choose $z_1\in S(x,
2h)\cap\mathbb{C}_+$
with $|b(z_1)|<\varepsilon$. Now consider
$S(x,3h)$. Since $\Omega(b, \varepsilon)$ is connected and unbounded,
there exists a point $z_2$ on the boundary
of $S(x,3h)$ such that $|b(z_2)|<\varepsilon$. Hence, there exists a continuous
curve
$\gamma$ connecting $z_1$ and $z_2$ and such that $|b|<\varepsilon$ on $\gamma$.
Now let $z=x+ih$.
Applying the theorem on two constants to the domain ${\rm Int}\, S(x,3h)
\setminus \gamma$ we conclude that $|b(z)|\le \delta$ where $\delta\in
(0,1)$ depends
only on $\varepsilon$. Then inequality (\ref{poisson}) implies
$$
h \int_{S(x,h)} \frac{d\mu(\zeta)} {|\zeta-\overline z|^2}
\le C(1-\delta)^{-1}.
$$
It remains to note that $|\zeta-\overline z|\le
C_1 h$, $\zeta\in S(x,h)$ to obtain $\mu(S(x,h)) \le C_2 h$.

\end{proof}

\begin{example}
{\rm Examples are known of inner functions
satisfying the connected level set condition.
We would like to emphasize that there are also many outer functions
satisfying the conditions of Theorem \ref{embeddings4}.
For example, let $b(z)=\exp(\frac{i}{\pi}\log z)$, where $\log z$
is the main branch of the logarithm in $\mathbb{C} \setminus (-\infty,0]$.}
\end{example}

\begin{remark}
{\rm We see that if $b$ satisfies the conditions of Theorem \ref{embeddings4},
then it suffices to verify the inequality $\|f\|_{L^2(\mu)}\leq C\|f\|_b$
for the reproducing kernels of the space $\mathcal{H}(b)$ to get it for
all functions $f$ in $\mathcal{H}(b)$. Recently, Nazarov and Volberg
\cite{NV} showed that it is no longer true in the general case. }
\end{remark}

\section{Stability of bases of reproducing kernels}
Another application of Bernstein inequalities for model subspaces $K_\Theta^p$
is considered in \cite{Baranov05a};
it is connected with stability of Riesz bases and frames
of reproducing kernels $(k^\Theta_{\lambda_n})$ under small perturbations
of the points $\lambda_n$. Riesz bases of reproducing kernels in de
Branges--Rovnyak spaces $\mathcal{H}(b)$ were studied in \cite{Fricain}.
Making use of Theorem \ref{maintheorem} we extend the results of
\cite{Baranov05a} to the spaces $\mathcal{H}(b)$.

For $\lambda\in\mathbb{C}_+\cup E_2(b)$, we denote by $\kappa_\lambda^b$ the
normalized reproducing kernel at the point $\lambda$, that is,
$\kappa_\lambda^b= k_\lambda^b/(2\pi i \, \|k_\lambda^b\|_b)$. Let
$(\kappa_{\lambda_n}^b)_{n\geq 1}$ be a Riesz basis in $\mathcal{H}(b)$, let
$\lambda_n \in G_n$ and let $G=\bigcup_n G_n\subset
\overline{\mathbb{C}_+}$ satisfy the following properties.
\begin{enumerate}
\item[$\mathrm{(i)}$] There exist positive constants $c$ and $C$ such that
\[
c\leq \frac{\|k_{z_n}^b\|_b}{\|k_{\lambda_n}^b\|_b}\leq C,\qquad
z_n\in G_n.
\]
\item[$\mathrm{(ii)}$] For any $z_n\in G_n$, the measure
$\nu=\sum_n \delta_{[\lambda_n,z_n]}$
is a Carleson measure and, moreover, the Carleson constants $C_\nu$ of such
measures (see (\ref{carl})) are uniformly bounded with respect to $z_n$.
Here $[\lambda_n,z_n]$ is the straight line interval with the endpoints
$\lambda_n$ and $z_n$, and $\delta_{[\lambda_n,z_n]}$ is the Lebesgue measure
on the interval.
\end{enumerate}

\begin{remark}\label{Rem1:stabilite}
{\rm As in the inner case, it should be noted that for $\lambda_n\in\mathbb{C}_+$,
there always exist non-trivial sets $G_n$ satisfying $(i)$ and $(ii)$. More
precisely, we can take
\[
G_n:=\{z\in\mathbb{C}_+:|z-\lambda_n|<r\Ima\lambda_n\},
\]
for sufficiently small $r>0$. Indeed,  we know \cite{Fricain} that if
$(\kappa_{\lambda_n}^b)_{n\geq 1}$ is a Riesz basis in $\mathcal{H}(b)$, then
$(\lambda_n)_{n\great 1}$ is a Carleson sequence, that is,
\[
\inf_{k\great
1}\prod_{n\not=k}\left|\frac{\lambda_n-\lambda_k}{\lambda_n-\overline\lambda_k}\right|>0.
\]
In particular, the measure
$\nu:=\sum_n\Ima \lambda_n\, \delta_{\lambda_n}$ is a Carleson measure.
Therefore, we see that $G_n$ satisfy $\mathrm{(ii)}$. Moreover,
using Lemma \ref{Lem:stabilite} below, we see that $G_n$ satisfy
also the condition $\mathrm{(i)}$.}
\end{remark}

Recall that
$w_p(z)=\min(\|(k_z^b)^2\|_q^{-p/(p+1)},\|\rho^{1/q}
\mathfrak{K}_{z,1}^\rho\|_q^{-p/(p+1)})$.

\begin{Thm}\label{thm:stabilite-base}
Let $(\lambda_n)_{n\geq 1}\subset \mathbb{C}_+\cup E_2(b)$ be such that
$(\kappa_{\lambda_n}^b)_{n\geq 1}$ is a Riesz basis in $\mathcal{H}(b)$ and let $p\in
[1,2)$. Then for any set $G=\bigcup_n G_n$ satisfying $\mathrm{(i)}$ and
$\mathrm{(ii)}$, there is $\varepsilon>0$ such that the system of reproducing
kernels $(\kappa_{\mu_n}^b)_{n\geq 1}$ is a Riesz basis whenever $\mu_n\in
G_n$ and
\begin{eqnarray}\label{eq:stabilite-base}
\sup_{n\geq
1}\frac{1}{\|k_{\lambda_n}^b\|_b^2}\int_{[\lambda_n,\mu_n]}w_p(z)^{-2}|dz|<\varepsilon.
\end{eqnarray}
\end{Thm}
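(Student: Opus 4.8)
The plan is to use the standard perturbation scheme for Riesz bases of reproducing kernels, reducing the stability statement to a norm estimate obtained from the Bernstein-type inequality of Theorem \ref{maintheorem}. Recall that a sequence $(\kappa_{\lambda_n}^b)_{n\geq 1}$ which is a Riesz basis remains a Riesz basis after replacing each $\kappa_{\lambda_n}^b$ by a vector $\kappa_{\mu_n}^b$ provided the transformation $\kappa_{\lambda_n}^b \mapsto \kappa_{\mu_n}^b$ extends (via the biorthogonal system) to a bounded operator on $\mathcal{H}(b)$ which is a small perturbation of the identity; more precisely, by the Bari--type stability theorem it suffices that
\[
\sum_{n\geq 1} \bigl| \langle f, \kappa_{\mu_n}^b - \kappa_{\lambda_n}^b \rangle_b \bigr|^2 \le \varepsilon^2 \, \|f\|_b^2, \qquad f\in\mathcal{H}(b),
\]
with $\varepsilon$ strictly less than the lower Riesz basis constant of $(\kappa_{\lambda_n}^b)$. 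Since $\langle f, \kappa_\lambda^b\rangle_b = 2\pi i\, f(\lambda)/\|k_\lambda^b\|_b$, the inner product $\langle f, \kappa_{\mu_n}^b - \kappa_{\lambda_n}^b\rangle_b$ is, up to the normalizing factor, the difference $f(\mu_n) - f(\lambda_n)$ (with the two reproducing kernels living at nearby points, so the normalizations are comparable by condition $\mathrm{(i)}$). Thus the whole matter reduces to establishing
\[
\sum_{n\geq 1} \frac{|f(\mu_n)-f(\lambda_n)|^2}{\|k_{\lambda_n}^b\|_b^2} \le C\varepsilon^2 \, \|f\|_b^2.
\]

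The key step is to write $f(\mu_n)-f(\lambda_n) = \int_{[\lambda_n,\mu_n]} f'(z)\, dz$ — this is legitimate because $f$ is analytic on $\mathbb{C}_+$ and, at boundary points in $E_2(b)$, differentiable by the Ahern--Clark type result together with Corollary \ref{cor:continuite-H(b)}; in fact by condition $\mathrm{(ii)}$ the segments $[\lambda_n,\mu_n]$ lie along a Carleson measure, and the only delicate case is when a segment touches $\mathbb{R}$, which is handled exactly as in the proof of Theorem \ref{embeddings2}. Then I apply the Cauchy--Schwarz inequality on each segment, splitting $f'(z) = \bigl(f'(z) w_p(z)\bigr)\cdot w_p(z)^{-1}$:
\[
|f(\mu_n)-f(\lambda_n)|^2 \le \Bigl(\int_{[\lambda_n,\mu_n]} w_p(z)^{-2}\,|dz|\Bigr)\Bigl(\int_{[\lambda_n,\mu_n]} |f'(z)|^2 w_p(z)^2\,|dz|\Bigr).
\]
By hypothesis \eqref{eq:stabilite-base}, the first factor is at most $\varepsilon\,\|k_{\lambda_n}^b\|_b^2$, which cancels the denominator. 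Summing over $n$ and noting that $\sum_n \int_{[\lambda_n,\mu_n]}|f'(z)|^2 w_p(z)^2\,|dz| = \|f'w_p\|_{L^2(\nu)}^2$ where $\nu = \sum_n \delta_{[\lambda_n,\mu_n]}$ is a Carleson measure with uniformly controlled constant (condition $\mathrm{(ii)}$), Theorem \ref{maintheorem} gives $\|f'w_p\|_{L^2(\nu)}^2 \le C \|f\|_b^2$ with $C$ depending only on $p$ and the Carleson constant. Hence the whole sum is bounded by $C\varepsilon\,\|f\|_b^2$, which is the desired estimate once $\varepsilon$ is chosen small enough.

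The main obstacle, as in the inner case of \cite{Baranov05a}, is not the analytic inequality above but the bookkeeping needed to pass from the perturbation estimate to the conclusion that $(\kappa_{\mu_n}^b)$ is a Riesz basis: one must verify that the map sending $\kappa_{\lambda_n}^b$ to $\kappa_{\mu_n}^b$ is both bounded below and has dense range, i.e. that it is invertible and not merely a small perturbation in norm of some bounded operator. This is where condition $\mathrm{(i)}$ is essential — it guarantees that the normalization factors $\|k_{\mu_n}^b\|_b$ and $\|k_{\lambda_n}^b\|_b$ are comparable, so that the estimate on $f(\mu_n)-f(\lambda_n)$ genuinely controls $\langle f, \kappa_{\mu_n}^b - \kappa_{\lambda_n}^b\rangle_b$ and not merely an unnormalized quantity; one should use Lemma \ref{Lem:stabilite} (invoked in Remark \ref{Rem1:stabilite}) to relate $\|k_z^b\|_b$ at nearby points. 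Once the perturbation operator is shown to be $I$ plus an operator of norm $<1$, its invertibility is automatic and, since Riesz bases are preserved under bounded invertible operators, the proof is complete.
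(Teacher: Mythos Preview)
Your argument follows the paper's proof almost verbatim: Cauchy--Schwarz on each segment, the hypothesis \eqref{eq:stabilite-base} to absorb the $w_p^{-2}$ integral, condition $\mathrm{(ii)}$ to assemble a Carleson measure $\nu$, and Theorem~\ref{maintheorem} to bound $\|f'w_p\|_{L^2(\nu)}$. The handling of boundary differentiability via Corollary~\ref{cor:continuite-H(b)} and the density reduction are also the same.

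There is, however, one genuine slip in your normalization step. You assert that condition $\mathrm{(i)}$ (comparability of $\|k_{\mu_n}^b\|_b$ and $\|k_{\lambda_n}^b\|_b$) makes the estimate on $f(\mu_n)-f(\lambda_n)$ control $\langle f,\kappa_{\mu_n}^b-\kappa_{\lambda_n}^b\rangle_b$. But
\[
\langle f,\kappa_{\mu_n}^b-\kappa_{\lambda_n}^b\rangle_b
= \frac{f(\lambda_n)}{\|k_{\lambda_n}^b\|_b}-\frac{f(\mu_n)}{\|k_{\mu_n}^b\|_b}
= \frac{f(\lambda_n)-f(\mu_n)}{\|k_{\lambda_n}^b\|_b}
+ f(\mu_n)\Bigl(\frac{1}{\|k_{\lambda_n}^b\|_b}-\frac{1}{\|k_{\mu_n}^b\|_b}\Bigr),
\]
and condition $\mathrm{(i)}$ says only that the ratio of the two norms lies in $[c,C]$, \emph{not} that it is close to $1$. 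The second term is therefore not small in general, and your Bari-type estimate for the difference of the \emph{normalized} kernels does not follow.

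The paper circumvents this cleanly: rather than comparing $\kappa_{\lambda_n}^b$ with $\kappa_{\mu_n}^b$, it introduces the auxiliary vectors $\widetilde\kappa_{\mu_n}^b := k_{\mu_n}^b/(2\pi i\,\|k_{\lambda_n}^b\|_b)$, renormalized by the \emph{original} norm. Then $\langle f,\kappa_{\lambda_n}^b-\widetilde\kappa_{\mu_n}^b\rangle_b$ is \emph{exactly} $(f(\lambda_n)-f(\mu_n))/\|k_{\lambda_n}^b\|_b$, your estimate goes through verbatim, and \cite[Lemma~2.3]{Baranov05a} yields that $(\widetilde\kappa_{\mu_n}^b)$ is a Riesz basis. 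Condition $\mathrm{(i)}$ is then used only at the very end, to pass from $(\widetilde\kappa_{\mu_n}^b)$ to $(\kappa_{\mu_n}^b)$: the two systems differ by scalar factors uniformly bounded above and below, so one is a Riesz basis if and only if the other is. With this adjustment your proof is complete and coincides with the paper's.
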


\begin{proof}
Since $\mu_n\in G_n$, the condition $\mathrm{(i)}$ implies that
$\|k_{\mu_n}^b\|_b\asymp \|k_{\lambda_n}^b\|_b$ and thus
$(\kappa_{\mu_n}^b)_{n\geq 1}$ is a Riesz basis if and only if
$(\widetilde\kappa_{\mu_n}^b)_{n\geq 1}$ is a Riesz basis where
\[
\widetilde\kappa_{\mu_n}^b=\dfrac{k_{\mu_n}^b}{2\pi i \, \|k_{\lambda_n}^b\|_b}.
\]
In view of \cite[Lemma 2.3]{Baranov05a}, it suffices to check the estimate
\begin{eqnarray}\label{eq:cle-stabilite}
\sum_{n=1}^\infty|\langle
f,\kappa_{\lambda_n}^b-\widetilde\kappa_{\mu_n}^b\rangle_b|^2\leq \varepsilon
\|f\|_b^2,\qquad f\in\mathcal{H}(b),
\end{eqnarray}
for sufficiently small $\varepsilon>0$. Now it follows from
(\ref{eq:stabilite-base}) and Corollary \ref{cor:continuite-H(b)} (a)
that any $f$ in $\mathcal{H}(b)$ is differentiable in $]\lambda_n,\mu_n[$.
Moreover, the set of functions in $\mathcal{H}(b)$ which are continuous on
$[\lambda_n,\mu_n]$ is dense in $\mathcal{H}(b)$ (take the set of
reproducing kernels). Therefore, we can prove
(\ref{eq:cle-stabilite}) only for functions $f\in\mathcal{H}(b)$ continuous on  $[\lambda_n,\mu_n]$. Then
\[
|\langle
f,\kappa_{\lambda_n}^b-\widetilde\kappa_{\mu_n}^b\rangle_b|^2
=\dfrac{|f(\lambda_n)-f(\mu_n)|^2}{\|k_{\lambda_n}^b\|_b^2}
=\dfrac{1}{\|k_{\lambda_n}^b\|_b^2}\left|\int_{[\lambda_n,\mu_n]}f'(z)\,dz\right|^2.
\]
By the Cauchy--Schwartz inequality and (\ref{eq:stabilite-base}), we get
\[
|\langle f,\kappa_{\lambda_n}^b-\widetilde\kappa_{\mu_n}^b\rangle_b|^2\leq
\varepsilon \int_{[\lambda_n,\mu_n]}|f'(z)w_p(z)|^2|dz|.
\]
It follows from assumption $\mathrm{(ii)}$ that
$\nu:=\sum_n\delta_{[\lambda_n,\mu_n]}$ is a Carleson measure with a constant
$C_\nu$ which does not exceed some absolute constant depending only on $G$.
Hence, according to Theorem \ref{maintheorem}, we have
\begin{align*}
\sum_{n=1}^\infty|\langle
f,\kappa_{\lambda_n}^b-\widetilde\kappa_{\mu_n}^b\rangle_b|^2\leq \,\,& \varepsilon
\sum_{n=1}^\infty \int_{[\lambda_n,\mu_n]}|f'(z)w_p(z)|^2|dz|\\
=\,\,& \varepsilon \|f'w_p\|_{L^2(\nu)}^2 \leq  C \, \varepsilon \, \|f\|_b^2,
\end{align*}
for a constant $C$ which depends on $G$, $(\lambda_n)$ and $p$. Then Lemma
2.3 of \cite{Baranov05a} implies that we can choose a sufficiently small
$\varepsilon>0$ such that $(\widetilde\kappa_{\mu_n}^b)_{n\geq 1}$ is a Riesz basis
in $\mathcal{H}(b)$.

\end{proof}

Denote by $\rho(z,\omega)$ the pseudohyperbolic distance between
$z$ and $\omega$,
\[
\rho(z,\omega) := \left|\frac{z-\omega}{z-\overline\omega}\right|.
\]
For the proof of the next corollary we need the following
well-known property.

\begin{Lem}\label{Lem:stabilite}
Let $b \in H^\infty(\mathbb{C}_+)$ with $\|b\|_\infty \leq 1$ and
$\varepsilon_0\in (0,1)$. Then there exist constants $C_1,C_2>0$
(depending only on $\varepsilon_0$) such that for any
$z,\omega\in\mathbb{C}_+$ satisfying $\rho(z,\omega)<\varepsilon_0$, we have
\begin{eqnarray}\label{eq1:stabilite}
C_1\leq \frac{1-|b(z)|}{1-|b(\omega)|}\leq C_2.
\end{eqnarray}

\end{Lem}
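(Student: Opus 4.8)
The plan is to deduce the lemma from two classical ingredients: the Schwarz--Pick inequality for $b$, and an elementary comparison of $1-|a|$ and $1-|a'|$ for two points $a,a'$ of the unit disc that are close in the pseudohyperbolic metric. We may assume that $b$ is not a unimodular constant (otherwise the statement is vacuous), so that $|b(\omega)|<1$ for every $\omega\in\mathbb{C}_+$ by the maximum modulus principle and the quotient in (\ref{eq1:stabilite}) is well-defined.

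First I would recall that $b$, mapping $\mathbb{C}_+$ into $\overline{\mathbb{D}}$, is a contraction for the pseudohyperbolic metric (the Schwarz--Pick inequality, transferred to $\mathbb{C}_+$ by a conformal automorphism), so
\[
\rho\big(b(z),b(\omega)\big)\leq \rho(z,\omega)<\varepsilon_0 .
\]
Thus it suffices to prove the following statement for the disc: if $a,a'\in\mathbb{D}$ and $r:=\rho(a,a')<1$, then
\[
\frac12\cdot\frac{1-r}{1+r}\ \leq\ \frac{1-|a|}{1-|a'|}\ \leq\ 2\cdot\frac{1+r}{1-r},
\]
and then to apply this with $a=b(z)$, $a'=b(\omega)$ and $r<\varepsilon_0$. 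Since $t\mapsto\frac{1+t}{1-t}$ is increasing on $[0,1)$, this gives (\ref{eq1:stabilite}) with $C_1=\frac12\,\frac{1-\varepsilon_0}{1+\varepsilon_0}$ and $C_2=2\,\frac{1+\varepsilon_0}{1-\varepsilon_0}$.

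For the disc estimate I would start from the standard identity
\[
1-\rho(a,a')^2=\frac{(1-|a|^2)(1-|a'|^2)}{|1-\overline{a'}a|^2},\qquad a,a'\in\mathbb{D},
\]
which is a direct computation, and rewrite it as $1-|a|^2=(1-r^2)\,|1-\overline{a'}a|^2/(1-|a'|^2)$. The key point is an upper bound for $|1-\overline{a'}a|$: writing $1-\overline{a'}a=(1-|a'|^2)+\overline{a'}(a'-a)$ and using $|a'-a|=r\,|1-\overline{a'}a|$ together with $|a'|\leq1$ yields $(1-r)\,|1-\overline{a'}a|\leq 1-|a'|^2$. Substituting this back gives $1-|a|^2\leq \frac{1+r}{1-r}\,(1-|a'|^2)$, and the symmetric argument gives the reverse bound. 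Passing from $1-|a|^2$ to $1-|a|$ then costs only the factor $(1+|a'|)/(1+|a|)\in[\tfrac12,2]$, which produces the displayed two-sided estimate. There is no serious obstacle here; the only step that needs a moment's care is the elementary inequality $(1-r)\,|1-\overline{a'}a|\leq 1-|a'|^2$, and the one point to flag at the outset is that the statement is read for non-constant $b$.
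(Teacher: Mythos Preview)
Your proof is correct and takes a genuinely different route from the paper's. The paper factors $b=I_bO_b$ into inner and outer parts, invokes a reference for the inner case, and for the outer part uses the Poisson integral representation $\log\frac{1}{|b(z)|}=\frac{\Ima z}{\pi}\int_\mathbb{R}\frac{|\log|b(t)||}{|t-z|^2}\,dt$ together with the elementary estimates $\Ima z\asymp\Ima\omega$ and $|t-z|\asymp|t-\omega|$ to conclude $\log|b(z)|\asymp\log|b(\omega)|$, whence $1-|b(z)|\asymp1-|b(\omega)|$; the two factors are then recombined via the inequality $\frac{1-t_1t_2}{1-s_1s_2}\le\frac{1-t_1}{1-s_1}+\frac{1-t_2}{1-s_2}$. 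By contrast, you bypass the factorization entirely: the Schwarz--Pick inequality (transported from $\mathbb{D}$ to $\mathbb{C}_+$) gives $\rho_\mathbb{D}(b(z),b(\omega))\le\rho_{\mathbb{C}_+}(z,\omega)<\varepsilon_0$, and the rest is the purely elementary disc inequality $\frac{1-r}{1+r}\le\frac{1-|a|^2}{1-|a'|^2}\le\frac{1+r}{1-r}$ when $\rho_\mathbb{D}(a,a')=r$, which you derive cleanly from $(1-r)|1-\overline{a'}a|\le1-|a'|^2$. Your argument is shorter, yields explicit constants depending only on $\varepsilon_0$, and applies verbatim to any holomorphic self-map of the upper half-plane into the closed disc; the paper's approach, while heavier, stays within the factorization framework used elsewhere in the article.
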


\begin{proof}
For the case of an inner function, the proof can be found, e.g., in
\cite[Lemma 4.1]{Baranov05a}. Since for
$0\leq t_1,t_2,s_1,s_2<1$, we have
\[
\frac{1-t_1t_2}{1-s_1s_2}\leq \frac{1-t_1}{1-s_1}+\frac{1-t_2}{1-s_2},
\]
the inner and outer factors of $b$ can be treated separately and we can
assume that $b$ is outer.
It follows easily from $\rho(z,\omega)<\varepsilon_0$ that
\begin{equation}\label{eq2:stabilite}
|z-\omega|<\frac{2\varepsilon_0}{1-\varepsilon_0}\Ima\omega
\end{equation}
and
\[
\frac{1-\varepsilon_0}{1+\varepsilon_0}<\frac{\Ima
z}{\Ima\omega}<\frac{1+\varepsilon_0}{1-\varepsilon_0}.
\]
Hence
$$
\frac{\Ima z}{\pi}\int_\mathbb{R} \frac{\big|\log|b(t)|\big|}{|t-z|^2}\,dt
\asymp \frac{\Ima \omega}{\pi}\int_\mathbb{R}
\frac{\big|\log|b(t)|\big|}{|t-\omega|^2}\,dt.
$$
Since $b$ is outer, we have
\begin{eqnarray}\label{eq:lem-pseudohyperbolique}
\log |b(z)|= - \frac{\Ima z}{\pi}\int_\mathbb{R}
\frac{\big|\log|b(t)|\big|}{|t-z|^2}\,dt \asymp \log|b(\omega)|,
\end{eqnarray}
which implies $1-|b(z)|\asymp 1-|b(\omega)|$.

\end{proof}

\begin{Cor}\label{Cor:stabilite}
Let $(\lambda_n) \subset\mathbb{C}_+$, let $(\kappa_{\lambda_n}^b)_{n\geq 1}$ be a
Riesz basis in $\mathcal{H}(b)$, and let $\gamma>1/3$. Then there is $\varepsilon>0$ such
that the system $(\kappa_{\mu_n}^b)_{n\geq 1}$ is a Riesz basis whenever
\begin{eqnarray}\label{eqcle:stabilite}
\left|\dfrac{\lambda_n-\mu_n}{\lambda_n-\overline{\mu_n}}\right|\leq \varepsilon
(1-|b(\lambda_n)|)^\gamma.
\end{eqnarray}
\end{Cor}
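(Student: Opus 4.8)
The plan is to deduce the statement from Theorem~\ref{thm:stabilite-base} by taking each $G_n$ to be a fixed pseudohyperbolic disk centred at $\lambda_n$ and then estimating the integral in (\ref{eq:stabilite-base}) through the lower bound for the weight provided by Lemma~\ref{lem:minoration}. The first step is to choose the exponent $p$: the function $p\mapsto (3-p)/(p+1)$ is continuous and strictly decreasing on $[1,2]$, equals $1$ at $p=1$ and $1/3$ at $p=2$, so since $\gamma>1/3$ one may fix $p\in(1,2)$ with $(3-p)/(p+1)\le\gamma$. This is the value of $p$ with which Theorem~\ref{thm:stabilite-base} will be applied.

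Next I would set
\[
G_n:=\{z\in\mathbb{C}_+:\rho(z,\lambda_n)<1/2\},\qquad G:=\bigcup_n G_n,
\]
and check that $G$ satisfies $\mathrm{(i)}$ and $\mathrm{(ii)}$. Since $(\kappa^b_{\lambda_n})_{n\ge1}$ is a Riesz basis, $(\lambda_n)$ is a Carleson sequence (Remark~\ref{Rem1:stabilite}); by (\ref{eq2:stabilite}) the disk $G_n$ lies inside $\{|z-\lambda_n|<2\,\Ima\lambda_n\}$, so the argument of Remark~\ref{Rem1:stabilite} yields $\mathrm{(ii)}$ with uniformly bounded Carleson constants, while $\mathrm{(i)}$ follows from Lemma~\ref{Lem:stabilite} together with the identity $\|k^b_\lambda\|_b^2=\pi(1-|b(\lambda)|^2)/\Ima\lambda$ and the comparison $\Ima z\asymp\Ima\lambda_n$ on $G_n$. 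Theorem~\ref{thm:stabilite-base} then supplies an $\varepsilon_0>0$ (depending on $G$, $(\lambda_n)$ and $p$) such that $(\kappa^b_{\mu_n})_{n\ge1}$ is a Riesz basis as soon as $\mu_n\in G_n$ and the supremum in (\ref{eq:stabilite-base}) is smaller than $\varepsilon_0$.

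The heart of the matter is then to bound that supremum under hypothesis (\ref{eqcle:stabilite}). Requiring $\varepsilon<1/2$ forces $\rho(\lambda_n,\mu_n)<1/2$, hence $\mu_n\in G_n\subset\mathbb{C}_+$; and since pseudohyperbolic disks are Euclidean disks, the whole segment $[\lambda_n,\mu_n]$ stays in the closed disk $\{\rho(\cdot,\lambda_n)\le\rho(\lambda_n,\mu_n)\}\subset G_n$. Thus Lemma~\ref{Lem:stabilite} gives $1-|b(z)|\asymp 1-|b(\lambda_n)|$ and $\Ima z\asymp\Ima\lambda_n$ for every $z\in[\lambda_n,\mu_n]$. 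Lemma~\ref{lem:minoration} with $n=1$ (where $\frac{pn}{q(pn+1)}=\frac{p-1}{p+1}$) then gives $w_p(z)^{-2}\le C\,(1-|b(\lambda_n)|)^{2(p-1)/(p+1)}(\Ima\lambda_n)^{-2}$ on the segment; the length of the segment is $|\lambda_n-\mu_n|=\rho(\lambda_n,\mu_n)\,|\lambda_n-\overline{\mu_n}|\asymp\rho(\lambda_n,\mu_n)\,\Ima\lambda_n$; and $\|k^b_{\lambda_n}\|_b^2\asymp(1-|b(\lambda_n)|)/\Ima\lambda_n$. Multiplying these three estimates and simplifying the exponent via $2(p-1)/(p+1)-1=(p-3)/(p+1)$ yields
\begin{align*}
\frac{1}{\|k^b_{\lambda_n}\|_b^2}\int_{[\lambda_n,\mu_n]}w_p(z)^{-2}\,|dz|
&\le C\,\rho(\lambda_n,\mu_n)\,(1-|b(\lambda_n)|)^{(p-3)/(p+1)}\\
&\le C\,\varepsilon\,(1-|b(\lambda_n)|)^{\gamma-(3-p)/(p+1)}\le C\varepsilon,
\end{align*}
the last step using $1-|b(\lambda_n)|\le1$ together with $\gamma\ge(3-p)/(p+1)$. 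Choosing $\varepsilon>0$ so small that $C\varepsilon<\varepsilon_0$ finishes the proof.

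I do not expect a genuine obstacle, only some bookkeeping. The point that requires care is the interplay of exponents: the quantity $(3-p)/(p+1)$ coming out of Lemma~\ref{lem:minoration} has infimum exactly $1/3$ over $p\in(1,2)$, and this is precisely what pins down the threshold $\gamma>1/3$ in the statement. One must also make sure, as above, that Lemma~\ref{Lem:stabilite} applies along the entire segment $[\lambda_n,\mu_n]$ with constants independent of $n$, which is where convexity of the pseudohyperbolic disks is used.
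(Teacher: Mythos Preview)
Your proposal is correct and follows essentially the same route as the paper: choose the neighbourhoods $G_n$ as in Remark~\ref{Rem1:stabilite}, bound $w_p^{-2}$ on the segment via Lemma~\ref{lem:minoration} and Lemma~\ref{Lem:stabilite}, and feed the resulting estimate into Theorem~\ref{thm:stabilite-base}. The only cosmetic differences are that the paper uses Euclidean disks $\{|z-\lambda_n|<r\,\Ima\lambda_n\}$ rather than pseudohyperbolic ones, and (after reducing to $\gamma<1$) picks $p$ so that $(3-p)/(p+1)=\gamma$ exactly rather than with an inequality; neither changes the substance of the argument.
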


\begin{proof}
By Remark \ref{Rem1:stabilite}, for sufficiently small $r>0$, the sets
$G_n=\{z:|z-\lambda_n|\leq r\Ima\lambda_n\}$
satisfy the conditions $\mathrm{(i)}$ and $\mathrm{(ii)}$. Let
$(\mu_n)_{n\geq 1}$ satisfy (\ref{eqcle:stabilite}). Then, by
(\ref{eq2:stabilite}), we have
\begin{eqnarray}\label{eq4:stabilite}
|\lambda_n-\mu_n|\leq
\frac{2\varepsilon}{1-\varepsilon}(1-|b(\lambda_n)|)^\gamma\Ima\lambda_n.
\end{eqnarray}
Therefore, if $\varepsilon$ is sufficiently small, then $\mu_n\in G_n$.
Without loss of generality, we can assume that $\gamma<1$ and since
$\gamma>1/3$, there exists $1<p<2$ such that
$2\frac{p-1}{p+1}=1-\gamma$.
Let $q$ be the conjugate exponent of $p$ and note that
$\frac{2p}{q(p+1)}=1-\gamma$.

Then it follows from Lemma \ref{lem:minoration} that there is a constant
$C=C(p)>0$ such that
\[
w_p(z)\geq C \frac{\Ima z}{(1-|b(z)|)^{\frac{p}{q(p+1)}}},\qquad z\in\mathbb{C}_+.
\]
Therefore, by Lemma \ref{Lem:stabilite}, we have
\[
w_p^{-2}(z)\leq C_1\frac{(1-|b(\lambda_n)|)^{1-\gamma}}{(\Ima \lambda_n)^{2}}
\]
for $z\in [\lambda_n,\mu_n]$. Hence,
\[
\frac{1}{\|k_{\lambda_n}^b\|_b^2}\int_{[\lambda_n,\mu_n]}w_p(z)^{-2}|dz|\leq
C_2\frac{\Ima\lambda_n}{1-|b(\lambda_n)|}|\lambda_n-\mu_n|\frac{(1-|b(\lambda_n)|)^{1-\gamma}}
{(\Ima\lambda_n)^2}
\]
and using (\ref{eq4:stabilite}), we obtain
\[
\frac{1}{\|k_{\lambda_n}^b\|_b^2}\int_{[\lambda_n,\mu_n]}w_p(z)^{-2}|dz|\leq
C_3\varepsilon.
\]
To complete the proof, take a sufficiently small $\varepsilon$ and
apply Theorem \ref{thm:stabilite-base}.

\end{proof}

\begin{remark}
{\rm It should be noted that all the statements remain valid if we are interested
in the stability of Riesz sequences of reproducing kernels, that is, of
systems of reproducing kernels which constitute Riesz bases in their closed
linear spans. }
\end{remark}

\begin{remark}
{\rm
In the case where
\begin{eqnarray}\label{eq:critere-RBN}
\sup_{n\geq 1}|b(\lambda_n)|<1,
\end{eqnarray}
the stability condition (\ref{eqcle:stabilite}) is equivalent to
\[
\left|\dfrac{\lambda_n-\mu_n}{\lambda_n-\overline{\mu_n}}\right|\leq \varepsilon,
\]
and we essentially get the result of stability obtained in the inner
case in \cite{Fricain-JOT}. Moreover, if $b$ is an extreme point of
the unit ball of $H^\infty(\mathbb{C}_+)$ and if (\ref{eq:critere-RBN})
is satisfied, then a criterion for $(\kappa_{\lambda_n}^n)$ to be a Riesz
basis of $\mathcal{H}(b)$ is given in \cite{Fricain}.
On the other hand, in the non-extreme case, there are no
Riesz bases of $\mathcal{H}(b)$ and the previous results
(Theorem \ref{thm:stabilite-base} and Corollary \ref{Cor:stabilite})
apply only for Riesz sequences. }
\end{remark}

\end{document}